\documentclass[12pt]{amsart}
\usepackage{amsmath}
\allowdisplaybreaks
\usepackage{amssymb}
\usepackage{verbatim}

\newtheorem{theorem}{Theorem}[section]
\newtheorem{lemma}[theorem]{Lemma}
\newtheorem{proposition}[theorem]{Proposition}

 \theoremstyle{definition}
\newtheorem{definition}[theorem]{Definition}

\theoremstyle{remark}
\newtheorem{remark}[theorem]{Remark}

\numberwithin{equation}{section}

\begin{document}

\title[$(\mathbf{p}, k)$-convex hypersurfaces for prescribed curvature]{The existence of $(\mathbf{p}, k)$-convex hypersurfaces for a class of Hessian quotient type curvature equations}

\author{Jiabao Gong}
\address{Faculty of Mathematics and Statistics, Hubei Key Laboratory of Applied Mathematics, Hubei University,  Wuhan 430062, P.R. China}
\email{202321104011284@stu.hubu.edu.cn}

\author{Qiang Tu$^{\ast}$}
\address{Faculty of Mathematics and Statistics, Hubei Key Laboratory of Applied Mathematics, Hubei University,  Wuhan 430062, P.R. China}
\email{qiangtu@hubu.edu.cn}

\thanks{$\ast$ Corresponding author}


\date{}

\begin{abstract}
This article investigates the existence of closed, star-shaped hypersurfaces for a class of Hessian quotient type curvature equations, in which the operator $\frac{\sigma_k}{\sigma_l}(\Lambda)$ arising in these equations can be viewed as a generalization of the classical Hessian quotient operator. By combining a priori estimates with the continuity method, we establish the existence and uniqueness of $(\mathbf{p}, k)$-convex hypersurfaces for both nonhomogeneous and homogeneous equations of this type. Furthermore, by exploiting the recently discovered ``inverse convexity'' property of the operator $\frac{\sigma_k}{\sigma_l}(\Lambda)$, we prove a constant rank theorem and thereby obtain the existence and uniqueness of strictly convex solutions  to these curvature equations.
\end{abstract}

\keywords{$(\mathbf{p}, k)$-convex hypersurface; curvature equations; full rank theorem.}

\makeatletter
\@namedef{subjclassname@2020}{\textup{2020} Mathematics Subject Classification}
\makeatother
\subjclass[2020]
{35J60, 35B45, 52A39.}

\maketitle
\vskip4ex

\section{Introduction}

Let $M\subset \mathbb{R}^{n+1}$ be a hypersurface and $\kappa(X)=(\kappa_1, \cdots, \kappa_n)$ be the principal curvatures of $M$ at $X$. Given an integer $\mathbf{p}$ with $1\leq \mathbf{p}\leq n$, set
$$\mathfrak{J}:=\{(i_1,\cdots,i_\mathbf{p})|1\leq i_1<\cdots<i_\mathbf{p}\leq n\}.$$
For any $I=(i_1, \cdots, i_\mathbf{p}) \in \mathfrak{J}$, we define
$$\Lambda_I(\kappa)=\kappa_{i_1}+\cdots+\kappa_{i_\mathbf{p}}.$$
For convenience, we  fix an order of the elements in $\mathfrak{J}$:
\begin{eqnarray*}
I_1,\cdots,I_N,~~\mbox{with}~~N=C^\mathbf{p}_n=\frac{n!}{\mathbf{p}!(n-\mathbf{p})!}.
\end{eqnarray*}
For an integer $1\leq k \leq N$, we give the following definition.
\begin{definition}
A  regular $C^2$-hypersurface $M$ is called $(\mathbf{p}, k)$-convex if, at each $X\in M$, $\kappa(X)$ satisfies  $\Lambda(\kappa) \in \tilde{\Gamma}_{k}$, where
$\Lambda(\kappa)=(\Lambda_{I_1}(\kappa), \cdots, \Lambda_{I_N}(\kappa))$,
and the G\aa rding's cone $\tilde{\Gamma}_{k}$ is defined as
\begin{eqnarray*}
\tilde{\Gamma}_{k}=\{\lambda \in \mathbb{R}^N: \sigma_{j}(\lambda)>0,~ \forall ~ 1\leq j \leq k\}.
\end{eqnarray*}
\end{definition}
When $k=N$, $(\mathbf{p}, k)$-convex hpersurface implies that $\kappa_1+\cdots+\kappa_{\mathbf{p}}>0$ where $\kappa_1\leq \kappa_2 \leq \cdots \leq\kappa_{n}$. This is equivalent to the definition of $\mathbf{p}$-convexity, a concept originally introduced by Wu \cite{W-1987}, and subsequently studied in depth by Sha \cite{Sha-86, Sha-87} and Harvey-Lawson \cite{Har-12, Har-13}. Hence, the definition of $(\mathbf{p}, k)$-convex can be regarded as a generalization of the definition of $\mathbf{p}$-convex.

The curvature equation arises naturally in many geometric problems such as prescribed Weingarten curvature problem and prescribed curvature measure problem in convex geometry (see \cite{Be69, Chu21, Fi67, Fi68, Ger06, HMS04}).
In this paper, we consider the following Hessian quotient type curvature equation for $(\mathbf{p}, k)$-convex hypersurface
\begin{equation}\label{G-eq-0000}
\frac{\sigma_k}{\sigma_l}(\Lambda(\kappa))=f(X, \nu), \quad X\in M,
\end{equation}
where $\sigma_k$ is the $k$-th elementary symmetric function, $0\leq l <k\leq  N$, $\nu(X)$ is the unit outer normal of $M$ at $X$ and  the function $f \in C^2$ is positive. This type of Hessian quotient type curvature equation has been extensively studied in some special cases.

When $\mathbf{p}=1$ and $l=0$, Eq. \eqref{G-eq-0000} reduces to the classical prescribed curvature  equation
$$\sigma_k(\kappa)=f(X, \nu),$$
which has been widely studied in the past three decades.   When $k=1$ or  $k=n$, the equation becomes the quasilinear equation or the Gauss curvature equation, respectively. Then the corresponding  a priori estimates and solvability follow from the classical theory of quasi-linear PDEs and Monge-Amp\`ere type equations in  \cite{CNS}.
When $f$ is independent of $\nu$, curvature estimates for admissible hypersurfaces were proved by Caffarelli-Nirenberg-Spruck \cite{Ca}
for a general class of fully nonlinear operators $F$, including
$F=\sigma_k$ and $F=\frac{\sigma_k}{\sigma_l}$.
Then Guan-Lin-Ma \cite{GM06} studied the existence and convexity of shar-shaped hypersurfaces  by the Constant Rank Theorem.
 When $f$ depends only on $\nu$, Guan-Guan \cite{Guan02} used a priori estimates and degree theory approach to prove the existence of a closed, strictly convex hypersurface satisfying the curvature equation.
 When  $f(X, \nu) = \langle X, \nu \rangle \Tilde{f}(X)$, the equation is  equivalent to the curvature measure problem, which was settled by Guan-Li-Li \cite{Guan12}, Guan-Lin-Ma \cite{GM06}, Guan-Li \cite{Guan-Li-97},  through the establishment of a crucial a priori $C^2$ esimate and a deformation lemma.
 Then Huang-Xu \cite{HX-2013} studied  the generalized curvature measure problem with  $f(X, \nu) = \langle X, \nu \rangle^p \tilde{f}(X)$.
Ivochkina \cite{Iv1,Iv2} considered the Dirichlet problem of Eq. \eqref{G-eq-0000} and obtained curvatute estimates under some extra conditions on the dependence of $f$ on $\nu$. Moreover, there has been much progress in establishing curvature estimates for Eq. \eqref{G-eq-0000} in case $2\leq k \leq n-1$.  We refer to \cite{Chu21, Guan-Ren15, Lu-2023, Ren, Ren1,  Sp}.

When $\mathbf{p}=n-1$, Eq. \eqref{G-eq-0000} becomes the following prescribed curvature type problem
\begin{equation}\label{G-eq-000011}
\frac{\sigma_k}{\sigma_l}(\eta(\kappa))=f(X, \nu),
\end{equation}
where $\eta= (\sum_{i\neq 1} \kappa_i, \cdots, \sum_{i\neq n} \kappa_i)$. This kind of equation is motivated by many important geometric problems. For example, when $k=n$, $l=0$ and $\eta$ is replaced by the Hermitian matrix $\Delta u I- \partial \bar{\partial} u$, Eq. \eqref{G-eq-000011}  is called the $(n-1)$-Monge-Amp\`ere equation, which is related to the Gauduchon conjecture in complex geometry; the conjecture has recently been solved by Sz\'ekelyhidi-Tosatti-Weinkove in \cite{STW-17}. For more details, see \cite{G-84, TW-17}. It was then natural to study a  priori estimates and solvability
for Eq. \eqref{G-eq-000011}.  Chu-Jiao \cite{CJ-2021} and Chen-Tu-Xiang \cite{CTX2020} established curvature estimates for Eq. \eqref{G-eq-000011}. The Plateau type problem for strictly locally convex hypersurfaces with Eq. \eqref{G-eq-000011}
has been studied by He-Tu-Xiang in \cite{HTX-2024}. The readers can refer to \cite{CTX-2023, DW-2022, Mei-2023, MZ24, SX-2025, Wang-2025} for more researches about Eq. \eqref{G-eq-000011}.

Recently, curvature estimates and existence of a star-shaped $\mathbf{p}$-convex hypersurface for Eq. \eqref{G-eq-0000}  with $k=n, l=0$ have been obtained by Dong \cite{Dong-2023}. For  general $k$ and $l$,  Zhou \cite{Zhou-2024} established the global curvature estimates under some conditions. A natural problem is raised whether we can establish   existence results for closed star-shaped hypersurfaces for this curvature type  equation. In this paper, we try to discuss this problem. More precisely, inspired by the  works of Guan-Li-Li \cite{Guan12} and Guan-Lin-Ma \cite{GM06} on curvature measure problem,  we consider the following prescribed curvature  measure type problem
\begin{equation}\label{G-eq}
\frac{\sigma_k}{\sigma_l}(\Lambda(\kappa))=f(\frac{X}{|X|}) |X|^b \langle X, \nu\rangle^q, \quad X\in M,
\end{equation}
for closed, star-shaped hypersurfaces, where $b, q$ are real numbers.

By establishing the a priori estimates for the $(\mathbf{p}, k)$-convex solution to  equation \eqref{G-eq}, we obtain the following main results.

\begin{theorem}\label{k-convex}
Let $n \geq 2$, $\mathbf{p}>1$, $2<k\leq C_{n-1}^{\mathbf{p}-1}+1$ and $0 \leq l<\min\{C_{n-1}^{\mathbf{p}-1}, k\}  $. Suppose $f \in C^2(\mathbb{S}^n)$ is a positive function.
\begin{enumerate}
    \item[{\em(\romannumeral1)}] If $-b-q-k+l>0$, then there exists a unique  $(\mathbf{p}, k)$-convex star-shaped hypersurface $M \in C^{3,\alpha}$ with $ \alpha \in (0,1)$, satisfying \eqref{G-eq}.
    \item[{\em(\romannumeral2)}] If $-b-q-k+l=0$, and one of the following conditions holds:
\begin{enumerate}
\item  $q \neq 0$;
 \item   $q =0$ and $\frac{|\nabla f|}{f}< 2(k-l) \sqrt{\frac{\mathbf{p}-1}{\mathbf{p}}}$;
 \end{enumerate}
then there exists a unique positive constant $\gamma$ such that  the homogeneous Hessian quotient type curvature equation
    \begin{equation}\label{eq:1.1}
    \frac{\sigma_k}{\sigma_l}(\Lambda(\kappa))=\gamma f(\frac{X}{|X|}) |X|^b \langle X, \nu\rangle^q, \quad X \in M,
    \end{equation}
    has a unique  $(\mathbf{p}, k)$-convex  hypersurface $M \in C^{3,\alpha}$, $\alpha \in (0,1)$, up to a dilation.
\end{enumerate}
Moreover,  whatever the addition in $(\romannumeral1)$ and $(\romannumeral2)$ assume that $q\geq0$ and
\begin{equation}\label{908998}
f^{\frac{1}{k-l}}(\frac{X}{|X|}) |X|^{\frac{b}{k-l}}~\mbox{is locally concave in}~\mathbb{R}^{n+1} \backslash \{0\},
\end{equation}
 then $M$ is strictly convex.
\end{theorem}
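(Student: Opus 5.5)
We would write the star-shaped hypersurface as a radial graph $M=\{\rho(x)x: x\in\mathbb{S}^n\}$ and set $u=\log\rho$. Then \eqref{G-eq} becomes a fully nonlinear elliptic equation on $\mathbb{S}^n$ in which $\kappa$ is given by the eigenvalues of $(\delta_{ij}+u_iu_j)^{-1/2}(\delta_{ij}-u_{ij}+u_iu_j)(\delta_{ij}+u_iu_j)^{-1/2}$ divided by $\rho\sqrt{1+|\nabla u|^2}$. Using $\langle X,\nu\rangle=\rho/\sqrt{1+|\nabla u|^2}$ and the homogeneity of degree $k-l$ of $\sigma_k/\sigma_l$, the right-hand side becomes $f(x)e^{(b+q+k-l)u}(1+|\nabla u|^2)^{\cdots}$. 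The sign of $\beta:=-b-q-k+l$ therefore controls the zeroth order term. The existence proof in (i) will run by the continuity method (or degree theory). We deform $f_t=(1-t)\cdot\mathrm{const}+tf$, starting from the sphere as the unique solution at $t=0$. Openness comes from invertibility of the linearized operator, which holds because $\beta>0$ gives a strictly monotone zeroth order term. Closedness comes from uniform $C^{2,\alpha}$ estimates, and Evans--Krylov applies since $(\sigma_k/\sigma_l)^{1/(k-l)}$ is concave on $\tilde\Gamma_k$.

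\textbf{A priori estimates.} The $C^0$ bound will follow from the maximum principle. At a maximum point of $u$ we have $\nabla u=0$ and $\kappa_i\geq e^{-u}$, so $\sigma_k/\sigma_l(\Lambda)\geq c\, e^{-(k-l)u}$. Comparing with $f e^{(b+q)u}$ gives $e^{\beta u}\leq C$, and symmetrically at a minimum; the case $\beta>0$ yields two-sided bounds. Uniqueness in (i) is proved the same way, by comparing two solutions via a dilation and using strict monotonicity in $u$.

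The gradient estimate will use a maximum principle applied to $|\nabla u|^2$, or to $\langle X,\nu\rangle$ from below. For the $C^2$ estimate we will rely on the curvature estimates of Zhou \cite{Zhou-2024}, which are available for general $k,l$ under the restrictions $k\leq C_{n-1}^{\mathbf p-1}+1$ and $l<C_{n-1}^{\mathbf p-1}$. These restrictions are exactly what make the relevant concavity and partial-uniformity inequalities for $\sigma_k(\Lambda)$ hold, in particular a bound $\sum_i F^{ii}\geq c$ and control of the third order terms. The main obstacle is that $f$ depends on $\nu$ through $\langle X,\nu\rangle^q$, so the test function $\log\kappa_{\max}-N\log\langle X,\nu\rangle$ produces terms $q\,h_{11}$-order quantities. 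We would try to absorb these using the $(\mathbf p,k)$ structure of $\Lambda$, where every $\Lambda_I$ contains $\kappa_1$ for $C_{n-1}^{\mathbf p-1}$ indices.

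\textbf{Case (ii).} For $\beta=0$ the equation is invariant under dilation, so we first solve the approximating equations $\frac{\sigma_k}{\sigma_l}=f|X|^{b-\epsilon}\langle X,\nu\rangle^q$, which have $\beta_\epsilon=\epsilon>0$. We then normalize $\tilde u_\epsilon=u_\epsilon-\max u_\epsilon$ and need an $\epsilon$-independent bound on the oscillation of $u_\epsilon$ and on $|\nabla u_\epsilon|$. When $q\neq0$, the factor $\langle X,\nu\rangle^q$ yields a gradient bound through the maximum principle for $|\nabla u|$, since the power of $(1+|\nabla u|^2)$ then appears with a favorable sign. When $q=0$, we use the test function $|\nabla u|^2$ together with the inequality for $\mathbf p$-sums, $\sum F^{ii}\kappa_i^2\cdot\frac{\mathbf p}{\mathbf p-1}$-type estimates; the smallness condition $|\nabla f|/f<2(k-l)\sqrt{(\mathbf p-1)/\mathbf p}$ is exactly what closes the quadratic inequality, as in Guan--Lin--Ma. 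A $C^0$ bound on the oscillation follows by integrating the gradient bound. Passing to the limit, $e^{\epsilon u_\epsilon(\cdot)}\to\gamma$, which gives the constant $\gamma$. Uniqueness of $\gamma$ and of $M$ up to dilation follows from a strong maximum principle argument at contact points of dilated solutions.

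\textbf{Convexity.} Under $q\geq0$ and \eqref{908998} we would run a deformation from the sphere through the family above and prove a constant rank theorem for the second fundamental form. This uses the ``inverse convexity'' of $\sigma_k/\sigma_l(\Lambda)$, namely convexity of $A\mapsto-(\sigma_k/\sigma_l)^{-1/(k-l)}(\Lambda(A^{-1}))$. Combined with the concavity assumption, it gives $\phi=\sigma_{m+1}(h_{ij})+\sigma_{m+2}/\sigma_{m+1}$ satisfying $L\phi\leq C(\phi+|\nabla\phi|)$. By the strong maximum principle, a minimal rank $m<n$ would persist on all of $M$, which is impossible on a closed hypersurface. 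Hence strict convexity is preserved along the deformation.
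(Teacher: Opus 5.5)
Your outline follows the paper's proof in every substantive step. That includes $C^0$ bounds from the maximum principle at the extrema of $\rho$, the gradient bound from the maximum of $|\nabla\log\rho|^2$, and curvature bounds quoted from Zhou. Zhou's theorems already cover right-hand sides $f(X,\nu)$, so you need no extra absorption of the $\langle X,\nu\rangle^q$ terms. It also includes openness from injectivity of the linearization, uniqueness by dilating one solution until it touches the other, and in case (ii) the $\varepsilon$-approximation with the $q\neq0$/$q=0$ split and $\gamma$ coming from the limit of $\rho_\varepsilon^{-\varepsilon}$.

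Your injectivity argument works like this: in the variable $\log\rho$ the left-hand side involves only $\nabla u,\nabla^2u$, so $\beta>0$ makes the zeroth-order coefficient strictly signed. This is equivalent to the paper's substitution $w=v\rho$ combined with $L\rho>0$.

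In case (i) the gradient bound is actually immediate. At the maximum of $|\nabla u|^2$ all $F^{ii}$-terms are nonnegative, and you are left with $\beta F|\nabla u|^2\le F\frac{|\nabla f|}{f}|\nabla u|$.

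The $\mathbf p>1$ structure matters only in case (ii). There the paper bounds the $F^{ii}$-terms below by $\frac{1}{\mathbf p}\sum_I G^{II}\Lambda_I^2+(\mathbf p-1)(1+|\nabla u|^2)\sum_I G^{II}-2(k-l)F$. Newton--Maclaurin and AM--GM then give $2(k-l)\sqrt{(\mathbf p-1)/\mathbf p}\,\sqrt{1+|\nabla u|^2}$ when $q=0$. When $q\neq0$, one of the two terms grows like $(1+|\nabla u|^2)^{\frac12+\frac{|q|}{2(k-l)}}$. This holds for either sign of $q$, not just one ``favorable'' sign, with coefficients controlled by the $\varepsilon$-uniform bounds on $\rho_\varepsilon^{\varepsilon}$.

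Using the Bian--Guan test function $\sigma_{m+1}+\sigma_{m+2}/\sigma_{m+1}$ instead of the paper's $\sigma_{m+1}$ is a harmless variant.

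The step that would not go through as written is the convexity clause. To apply the constant rank theorem along the deformation, you need the concavity hypothesis for every $f_t$. Along your path $f_t=(1-t)c+tf$ it is not inherited. The function $f_t^{1/(k-l)}|X|^{b/(k-l)}$ is a power mean of order $k-l\ge1$ of $c^{1/(k-l)}|X|^{b/(k-l)}$ and $f^{1/(k-l)}|X|^{b/(k-l)}$, and such means of concave functions need not be concave. This is why the paper interpolates $f^{1/(k-l)}$ linearly instead.

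The point is moot, however, because \eqref{908998} can never hold under the standing assumptions. If $q\ge0$ and $-b-q-k+l\ge0$, then $s:=b/(k-l)\le-1$. Along every ray, $r\mapsto f^{1/(k-l)}(\theta)\,r^{s}$ has second derivative $s(s-1)f^{1/(k-l)}(\theta)\,r^{s-2}>0$. So the ``moreover'' part of the theorem is vacuously true. For the same reason, the paper's own check along its path does not work either, since it needs $|X|^{b/(k-l)}$ to be concave.

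The natural (presumably intended) hypothesis, matching inverse convexity and the curvature-measure literature, is local convexity of $f^{-1/(k-l)}|X|^{-b/(k-l)}$. This function is homogeneous of degree $\ge1$. That convexity is preserved if you interpolate $f^{-1/(k-l)}$, rather than $f$, linearly.
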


\begin{remark}
When applying the continuity method to prove the existence of strictly convex solutions, we need strict convexity to be preserved throughout the process.  For this purpose, we have established the Constant Rank Theorem of Eq. \eqref{G-eq} using the new phenomenon of ``inverse convexity'' of operators $\frac{\sigma_k}{\sigma_l}(\Lambda)$.
Furthermore, when $q = 0$, $b = -(k-l)$, and $\mathbf{p}=n-1$, Eq.~\eqref{eq:1.1} reduces to the prescribed mixed Weingarten curvature equation
\begin{equation*}\label{eq:mixed}
    \frac{\sigma_k}{\sigma_l}(\eta(\kappa))=\gamma f(\frac{X}{|X|}) |X|^{-(k-l)},
\end{equation*}
which has been studied by Mei-Zhu in \cite{MZ24}. Compard to Mei-Zhu's work, Theorem \ref{k-convex} allows more general constraints on $f$ in the homogeneous case.
\end{remark}

It is worth noting that in order to obtain the  curvature estimates, which have already been obtained by Zhou \cite{Zhou-2024}, we have to add constraints on $k$ and $l$ in Theorem  \ref{k-convex}.
However, we expect that the curvature estimates for Eq. \eqref{G-eq} still hold without any restrictions on $k$  in some special cases.
For this reason,  we consider the following Hessian type curvature equation
\begin{equation}\label{G-eqk}
\sigma_k(\Lambda(\kappa))=f(\frac{X}{|X|}) |X|^b \langle X, \nu\rangle^q,
\end{equation}
where $0<k\leq N:=C_n^\mathbf{p}$. The following are our results.

\begin{theorem}\label{k-convexk1}
Let $n \geq 2$, $\mathbf{p}> 1$, $1\leq k\leq N$ and  $q\leq 1$. Suppose $f \in C^2(\mathbb{S}^n)$ is a positive function.
\begin{enumerate}
\item[{\em(\romannumeral1)}]   If $-b-q-k>0$, then there exists a unique  $(\mathbf{p}, k)$-convex star-shaped hypersurface $M \in C^{3,\alpha}$ with $\alpha \in (0,1)$, satisfying \eqref{G-eqk}.
\item [{\em(\romannumeral2)}] If $-b-q-k=0$, and one of the following conditions holds:
\begin{enumerate}
\item  $q \neq 0$;
 \item   $q =0$ and $\frac{|\nabla f|}{f}<2k\sqrt{\frac{\mathbf{p}-1}{\mathbf{p}}}$;
 \end{enumerate}
 then there exists a unique positive constant $\gamma$ such that the homogeneous Hessian type curvature equation
    \begin{equation}\label{eq:1.2}
    {\sigma_k}(\Lambda(\kappa))=\gamma f(\frac{X}{|X|}) |X|^b \langle X, \nu\rangle^q, \quad X \in \mathbb{S}^n,
    \end{equation}
    has a unique  $(\mathbf{p}, k)$-convex star-shaped hypersurface $M \in C^{3,\alpha}$, $\alpha \in (0,1)$, up to a dilation.
\end{enumerate}
Furthermore, whatever the addition in $(\romannumeral1)$ and $(\romannumeral2)$ assume that $q\geq0$ and
$$f^{\frac{1}{k}}(\frac{X}{|X|}) |X|^{\frac{b}{k}}~\mbox{is locally concave in}~\mathbb{R}^{n+1}  \setminus \{0\},$$
then $M$ is strictly convex.
\end{theorem}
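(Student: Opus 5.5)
We reduce \eqref{G-eqk} to a scalar PDE on $\mathbb{S}^n$ by writing $M=\{\rho(x)x: x\in\mathbb{S}^n\}$ and setting $u=\log\rho$. Then $\nu=(x-\nabla u)/\sqrt{1+|\nabla u|^2}$, $\langle X,\nu\rangle=\rho/\sqrt{1+|\nabla u|^2}$, and
$$h_{ij}=\frac{\rho}{\sqrt{1+|\nabla u|^2}}\bigl(\delta_{ij}+u_iu_j-u_{ij}\bigr),\qquad g_{ij}=\rho^2(\delta_{ij}+u_iu_j).$$
Equation \eqref{G-eqk} becomes a fully nonlinear elliptic equation $G(D^2u,Du,u)=f(x)$ on $\mathbb{S}^n$. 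Ellipticity holds on $(\mathbf{p},k)$-admissible functions, since $\sigma_k^{1/k}(\Lambda)$ is concave and increasing in $\kappa$: $\Lambda$ is linear and monotone in $\kappa$, and $\sigma_k^{1/k}$ is concave on $\tilde\Gamma_k$. The plan is the standard one: a priori $C^0$, $C^1$ and $C^2$ estimates, then Evans--Krylov and Schauder for $C^{3,\alpha}$, then the continuity method. Uniqueness comes from a maximum principle argument. Part (ii) is handled by an approximation in the exponent, and the convexity statement by a constant rank theorem.

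\textbf{$C^0$ and $C^1$.} Homogeneity gives $\sigma_k(\Lambda)=\rho^{-k}\,\phi(\nabla u,\nabla^2u)$, while the right side is $\rho^{b+q}\,(1+|\nabla u|^2)^{-q/2}$. At a maximum point of $u$ we have $\nabla u=0$ and $\nabla^2u\le0$, so $\Lambda\ge \mathbf{p}(1,\dots,1)\cdot$(positive). This yields $\rho_{\max}^{-b-q-k}\ge c\min f$, and similarly at the minimum. When $-b-q-k>0$ this bounds $u$ above and below. For the gradient we apply the maximum principle to $|\nabla u|^2$. Differentiating the equation and using $-b-q-k>0$ (and $q\le1$ to control the $\langle X,\nu\rangle^q$ term) gives a good sign for the zero-order term, so $|\nabla u|$ is bounded at its maximum.

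\textbf{$C^2$.} We take the test function $\log\kappa_{\max}-\beta\log\langle X,\nu\rangle+\frac{a}{2}|X|^2$, or alternatively $\log \lambda_{\max}(\Lambda)$. The key point, as in Dong's $\mathbf{p}$-convex case, is that for $\sigma_k$ (not a quotient) the third-order terms can be handled without restricting $k$. The concavity inequality for $\sigma_k$ is written in the $\Lambda$ variables, and $\partial\Lambda_I/\partial\kappa_i\in\{0,1\}$. Since $\mathbf{p}>1$, the eigenvector $e_1$ appears in many $\Lambda_I$. We then use
$$\sum_i\frac{\partial\sigma_k}{\partial\kappa_i}\ge c\,\sigma_k^{1-1/(k)}$$
together with $\sum_i\frac{\partial\sigma_k}{\partial\kappa_i}\kappa_i^2\ge c\kappa_1\sum_i\frac{\partial\sigma_k}{\partial\kappa_i}$ for large $\kappa_1$ to absorb the gradient-of-$f$ terms, which involve $\nu$ through $\langle X,\nu\rangle^q$. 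I expect this curvature estimate for $\nu$-dependent $f$ with general $k$ to be the main obstacle. My first attempt would follow Guan--Ren--Wang and Dong, splitting into the cases $\kappa_n\ge-\delta\kappa_1$ and $\kappa_n<-\delta\kappa_1$.

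\textbf{Existence and uniqueness in (i).} Run the continuity method along $f_t=(1-t)c_0+tf$; the starting point is a sphere of suitable radius. Openness comes from the linearization, which is invertible because the zeroth-order coefficient has a strict sign when $-b-q-k>0$. Closedness comes from the estimates above. Uniqueness: given two solutions, rescale one by $s\rho$ so that it touches the other from one side, and compare at the touching point. The strict exponent $-b-q-k>0$ forces $s=1$.

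\textbf{Part (ii).} Approximate by $b_\varepsilon=b-\varepsilon$ and normalize the solutions $u_\varepsilon$ by $u_\varepsilon-\max u_\varepsilon$. We need a Harnack-type bound $\operatorname{osc} u_\varepsilon\le C$ uniformly in $\varepsilon$.
\begin{itemize}
\item If $q\ne0$, the gradient estimate is uniform, because the factor $(1+|\nabla u|^2)^{-q/2}$ provides the needed sign.
\item If $q=0$, we apply the maximum principle to $|\nabla u|^2$. This gives an inequality of the form $k\frac{\mathbf{p}-1}{\mathbf{p}}|\nabla u|^2-\frac{|\nabla f|}{f}|\nabla u|\cdots$, and the hypothesis $|\nabla f|/f<2k\sqrt{(\mathbf{p}-1)/\mathbf{p}}$ closes it.
\end{itemize}
Then $\varepsilon\,\max u_\varepsilon\to\log\gamma$ along a subsequence, and we pass to the limit. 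Uniqueness of $\gamma$ and of $M$ up to dilation follows from the same touching argument.

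\textbf{Strict convexity.} Under $q\ge0$ and local concavity of $f^{1/k}|X|^{b/k}$, we prove a constant rank theorem for the second fundamental form. We use $\phi=\sigma_{m+1}(h)+\sigma_{m+2}/\sigma_{m+1}$ and show $\sum F^{ij}\phi_{ij}\le C(\phi+|\nabla\phi|)$. This uses the inverse convexity of $\sigma_k^{1/k}(\Lambda)$ as a function of $\kappa^{-1}$, together with the concavity hypothesis. We then deform $f$ continuously from a constant, which gives sphere solutions, and strict convexity is preserved along the path.
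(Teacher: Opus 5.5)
\paragraph{The curvature estimate is missing.} Your overall architecture matches the paper's: $C^0$ and $C^1$ by the maximum principle, the continuity method with an invertible linearization, dilation/touching for uniqueness, $b\to b-\varepsilon$ for (ii), and a constant rank theorem via inverse convexity. What is missing is the curvature estimate, which is the one step where this theorem differs from Theorem~\ref{k-convex}. You do not supply it, and you flag it yourself as the main obstacle. The route you propose, Guan--Ren--Wang/Dong type arguments for a general $\nu$-dependent right-hand side, is not available for all $1\le k\le N$:
\begin{itemize}
\item as the paper recalls, for general $f(X,\nu)$ even $k=N$ is only known for $\mathbf p\ge n/2$ (Dong);
\item Zhou's estimate needs $k\le C_{n-1}^{\mathbf p-1}+1$, which is why Theorem~\ref{k-convex} carries that restriction;
\item for $\mathbf p=1$ the corresponding estimate for $k$-convex hypersurfaces is not known for general $k$.
\end{itemize}
Your claim that for $\sigma_k$ the third-order terms can be handled without restricting $k$ is unsupported. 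Note also that your $C^2$ sketch never uses $q\le 1$, and that is exactly where the theorem needs it.

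\paragraph{The missing idea.} Exploit the special form $\sigma_k(\Lambda)=T(X)u^q$, with $T=f(X/|X|)|X|^b$ and the support function $u=\langle X,\nu\rangle$ (not your $\log\rho$). Use the test function $\log(\sigma_1(\kappa)/u)$; this is your first test function with $\kappa_{\max}$ replaced by $\sigma_1$, $\beta=1$, $a=0$.
\begin{itemize}
\item At its maximum, $(\sigma_1)_s/\sigma_1=u_s/u$, while the equation gives $(\sigma_k)_s/\sigma_k=q\,u_s/u+T_s/T$.
\item Since $W=\mathcal D_H$ is linear in $H$ and $\sigma_1(\Lambda)=C_{n-1}^{\mathbf p-1}\sigma_1(\kappa)$, the Huang--Xu inequality transfers to $\sigma_k(\Lambda)$ (Lemma~\ref{convex}). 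That inequality follows from the concavity of $(\sigma_k/\sigma_1)^{1/(k-1)}$. It bounds the third-order term by a quadratic form in $u_s/u$ and $T_s/T$ with a parameter $\alpha>0$.
\item Combine this with $u_{ij}=h_{ijk}\langle X,e_k\rangle+h_{ij}-h_{ik}h_{kj}u$ and the commutation formula. The $F^{ij}h_{ik}h_{kj}$ terms cancel, and the $(\sigma_1)_l\langle X,e_l\rangle$ terms cancel by the critical-point condition.
\end{itemize}
One arrives at
\[
0\ge(1-q)\bigl(1+\alpha(q-1)\bigr)Tu^{q-3}|\nabla u|^2+\tfrac{k-q}{2}\,Tu^{q-1}|H|^2-C\sigma_1-C .
\]
For $q\le1$ some $\alpha>0$ makes the first coefficient nonnegative; for $q>1$ none does. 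Then $|H|^2\ge\sigma_1^2/n$ bounds $\sigma_1$ for every $k$. Your part (ii) needs this same estimate uniformly in $\varepsilon$, so it inherits the gap.

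\paragraph{Smaller points.}
\begin{itemize}
\item In the nonhomogeneous gradient estimate, $q\le 1$ plays no role. With $u=-\log\rho$, differentiating $e^{(-b-q-k)u}$ produces $(-b-q-k)F|\nabla u|^2$, which absorbs the $\nabla f$ term directly.
\item In the homogeneous case with $q=0$, the inequality at the maximum of $|\nabla u|^2$ is not quadratic in $|\nabla u|$; if it were, no condition on $f$ would be needed. By homogeneity it is linear. The paper bounds the good terms below by $\frac1{\mathbf p}\sum_IG^{II}\Lambda_I^2-2kF+(\mathbf p-1)(1+|\nabla u|^2)\sum_IG^{II}$, where $G^{II}=\partial\sigma_k/\partial\Lambda_I$ and each $I$ contains at least $\mathbf p-1$ indices other than the gradient direction. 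AM--GM on the first and last terms gives $2k\sqrt{(\mathbf p-1)/\mathbf p}\,(1+|\nabla u|^2)^{1/2}-\frac{|\nabla f|}{f}|\nabla u|-2k\le0$. This is where $\mathbf p>1$ and the threshold on $|\nabla f|/f$ come from.
\item For $q\ne0$, those two terms scale like $(1+|\nabla u|^2)^{\frac12\mp\frac{q}{2k}}$, so one of them is superlinear. This is a growth effect, not a sign.
\item There are two sign slips. The maximum of $\rho$ gives the upper bound $\rho_{\max}^{-b-q-k}\le\max f/(C_N^k\mathbf p^k)$. With your normalization, $\gamma=\lim(\max\rho_\varepsilon)^{-\varepsilon}$, i.e.\ $-\varepsilon\max u_\varepsilon\to\log\gamma$ for your $u=\log\rho$.
\item The paper interpolates linearly in $f^{1/k}$, not in $f$, to carry the concavity hypothesis along the path. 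However, for $q\ge0$ one has $b/k\le-1$, and a positive function homogeneous of negative degree is strictly convex along rays. So that hypothesis can never hold as stated, for your path or the paper's.
\end{itemize}
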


\begin{remark}
When $k=n$, Eq. \eqref{k-convexk1}  reduces to the following curvature equation
\begin{equation}\label{eq:123}
\Pi_{1\leq i_1<\cdots<i_{\mathbf{p}}}(\kappa_{i_1}+\cdots+\kappa_{i_{\mathbf{p}}})= f(\frac{X}{|X|}) |X|^b \langle X, \nu\rangle^q.
\end{equation}
Dong \cite{Dong-2023} established curvature estimates for $\mathbf{p}$-convex hypersurfaces for  Eq. \eqref{eq:123} when  $\mathbf{p}\geq\frac{n}{2}$,  with $f(\frac{X}{|X|}) |X|^b \langle X, \nu\rangle^q$ replaced by $f(X,\nu)$. It remains unknown whether the curvature estimates hold  when $\mathbf{p}<\frac{n}{2}$.  Theorem \ref{k-convexk1} not only provides  curvature estimates for  Eq. \eqref{eq:123} when $\mathbf{p}>1$, but also presents the existence results for $\mathbf{p}$-convex hypersurfaces and strictly convex hypersurfaces.
\end{remark}

This article is organized as follows. In Section 2, we introduce the necessary preliminaries, including the $(\mathbf{p},k)$-cone and $(\mathbf{p},k)$-convex function.
In particular, we prove the ``inverse convexity'' of the operator $-\left[\frac{\sigma_{k}}{\sigma_{l}}(\Lambda)\right]^{-\frac{1}{k-l}}$,
which plays a crucial role in the proof of the Constant Rank Theorem.
In Section 3, we derive the  a priori estimates and establish a Constant Rank Theorem, which guarantees the convexity of solutions and yields the existence and uniqueness result for the case $-b-q-k+l>0$. In Section 4, we establish the corresponding existence and uniqueness result for the case  $-b-q-k+l=0$.

\section{Preliminaries}

\subsection{Setting and General facts}
For later convenience, we first state our conventions on the Riemannian
curvature tensor and derivative notation. Let $M$ be a smooth
manifold and $g$ be a Riemannian metric on $M$ with Levi-Civita
connection $\nabla$. For a $(s, r)$-tensor field $\alpha$ on $M$,
its covariant derivative $\nabla \alpha$ is a $(s, r+1)$-tensor
field given by
\begin{equation*}
\begin{aligned}
&\nabla \alpha(Y^1, .., Y^s, X_1, ..., X_r, X)\\
=&\nabla_{X} \alpha(Y^1, .., Y^s, X_1, ..., X_r)\\
=&X(\alpha(Y^1, .., Y^s, X_1, ..., X_r))-\alpha(\nabla_X Y^1, .., Y^s, X_1, ..., X_r)\\
&-...-\alpha(Y^1, ..,Y^s, X_1, ..., \nabla_X  X_r),
\end{aligned}
\end{equation*}
the coordinate expression of this is denoted by
$$\nabla \alpha=(\alpha_{k_{1}\cdot\cdot\cdot
k_{r}; k_{r+1}}^{l_{1}\cdot\cdot\cdot l_{s}}).$$
We next  define the second covariant derivative of $\alpha$ as follows:
\begin{equation*}
\begin{aligned}
&&\nabla^2 \alpha(Y^1, .., Y^s, X_1, ..., X_r, X, Y)
=(\nabla_{Y}(\nabla\alpha))(Y^1, .., Y^s, X_1, ..., X_r, X).
\end{aligned}
\end{equation*}
The coordinate expression of this is denoted by
$$\nabla^2 \alpha=(\alpha_{k_{1}\cdot\cdot\cdot
k_{r}; k_{r+1}k_{r+2}}^{l_{1}\cdot\cdot\cdot l_{s}}).$$
 Similarly, we can also define the higher order covariant derivative of $\alpha$:
$$\nabla^3 \alpha=\nabla(\nabla^2 \alpha), \nabla^4 \alpha=\nabla(\nabla^3 \alpha), ... ,$$
and so on. For simplicity, the coordinate expression of the
covariant differentiation will usually be denoted by indices without
semicolons, e.g.,
$$u_{i}, \quad u_{ij} \quad \mbox{or} \quad
u_{ijk}$$
 for a function $u: M\rightarrow \mathbb{R}$.

Our convention for the Riemannian curvature $(3,1)$-tensor $Rm$ is
\begin{equation*}
Rm(X, Y)Z=-\nabla_{X}\nabla_{Y}Z+\nabla_{Y}\nabla_{X}Z+\nabla_{[X,Y]}Z.
\end{equation*}
Choose a local coordinate chart $\{x^i\}_{i=1}^{n}$ of $M$. The
components of the $(3,1)$-tensor $Rm$ are defined by
\begin{equation*}
Rm\bigg({\frac{\partial}{\partial x^i}}, {\frac{\partial}{\partial
x^j}}\bigg){\frac{\partial}{\partial x^k}}=R_{ijk}^{\ \ \
l}{\frac{\partial}{\partial x^l}}
\end{equation*}
and $R_{ijkl}=g_{lm}R_{ijk}^{\ \ \ m}$. Then, we have the
standard commutation formulas (Ricci identities):
\begin{equation*}\label{RI}
\alpha_{k_{1}\cdot\cdot\cdot k_{r};\ j i}^{l_{1}\cdot\cdot\cdot
l_{s}}-\alpha_{k_{1}\cdot\cdot\cdot k_{r};\ i
j}^{l_{1}\cdot\cdot\cdot l_{s}}=\sum_{a=1}^{r}R^{\ \ \ m}_{ijk_{l}}
\alpha_{k_{1}\cdot\cdot\cdot k_{a-1}m k_{a+1}\cdot\cdot\cdot
k_{r}}^{l_{1}\cdot\cdot\cdot l_{s}}-\sum_{b=1}^{s}R^{\ \ \
l_b}_{ijm} \alpha_{k_{1}\cdot\cdot\cdot k_{r}}^{l_{1}\cdot\cdot\cdot
l_{b-1}m l_{b+1}\cdot\cdot\cdot l_{r}}.
\end{equation*}

Let $M$ be an immersed hypersurface in $\mathbb{R}^{n+1}$. Denote
$R_{ijkl}$ to be the Riemannian curvature of $M\subset
\mathbb{R}^{n+1}$ with the induced metric $g$. Let $\nu$ be a given
unit normal and $h_{ij}$ be the second fundamental form of the
hypersurface with respect to $\nu$, that is
$$h_{ij}=-\langle\frac{\partial^2 X}{\partial x^i\partial x^j}, \nu\rangle_{\mathbb{R}^{n+1}}.$$
Recall the following identities
\begin{equation*}\label{Gauss for}
\nabla_i \nabla_j X=-h_{ij}\nu, \quad \quad \mbox{Gauss formula}
\end{equation*}
\begin{equation*}\label{Wein for}
\nabla_i \nu=h_{ij}X^j, \quad \quad \mbox{Weingarten formula}
\end{equation*}
\begin{equation*}\label{Gauss}
R_{ijkl}=h_{ik}h_{jl}-h_{il}h_{jk}, \quad \quad \mbox{Gauss
equation}
\end{equation*}
\begin{equation*}\label{Codazzi}
\nabla_{k}h_{ij}=\nabla_{j}h_{ik}, \quad \quad \mbox{Codazzi
equation}
\end{equation*}
where $X^j=g^{ij}\nabla_i X$.
Moreover, we have
\begin{equation}\label{2rd}
\begin{aligned}
\nabla_{i}\nabla_{j}h_{kl}
&=&\nabla_{k}\nabla_{l}h_{ij}+h^{m}_{j}(h_{il}h_{km}-h_{im}h_{kl})+h^{m}_{l}(h_{ij}h_{km}-h_{im}h_{kj}).
\end{aligned}
\end{equation}

\subsection{Star-shaped hypersurfaces in $\mathbb{R}^{n+1}$}

Let $M$ be a star-shaped hypersurface in $\mathbb{R}^{n+1}$ which can be represented by
\begin{equation*}
X(x)=\rho(x)x, \quad \mbox{for} \quad x \in \mathbb{S}^n,
\end{equation*}
where $X$ is the position vector of the hypersurface $M$ in $\mathbb{R}^{n+1}$ and $\rho$ is a smooth function on $\mathbb{S}^n$.
Let $\{e_1,...,e_n\}$ be a smooth local orthonormal frame field on
$\mathbb{S}^n$ and $e_\rho$ be the radial vector field
in $\mathbb{R}^{n+1}$. Let $D_i\rho=D_{e_i} \rho$, $D_iD_j\rho=D^2
\rho(e_i, e_j)$ denote the covariant derivatives of $\rho$ with respect
to the round metric $\sigma$ of $\mathbb{S}^n$. Then, the following formulas
hold:

(\romannumeral1) The tangential vector on $M$ is
\begin{equation*}
X_{i}=\rho e_{i}+D_i\rho e_{\rho}
\end{equation*}
and the corresponding outward unit normal vector is given by
\begin{equation}\label{Nor}
\nu=\frac{1}{v}\left(e_\rho-\frac{1}{\rho^2} D^j\rho e_j\right),
\end{equation}
where $v=\sqrt{1+\rho^{-2}|D \rho|^2}$ with $D^j \rho=\sigma^{ij}D_i\rho$.

(\romannumeral2) The induced metric $g$ on $M$ has the form
\begin{equation}\label{gij}
g_{ij}=\rho^2\sigma_{ij}+D_i\rho D_j\rho
\end{equation}
and its inverse is given by
\begin{equation}\label{gij2}
g^{ij}=\frac{1}{\rho^2}\left(\sigma^{ij}-\frac{D^i\rho D^j\rho}{\rho^2
v^{2}}\right).
\end{equation}

(\romannumeral3) The second fundamental form of $M$ is given by
\begin{equation}\label{hij}
h_{ij}=\frac{1}{v}\left(-D_iD_j\rho+\rho
\sigma_{ij}+\frac{2}{\rho}D_i\rho D_j\rho\right)
\end{equation}
and
\begin{equation*}\label{h_ij}
h^{i}_{j}=\frac{1}{\rho
v}\left(\delta^{i}_{j}+[-\sigma^{ik}+\frac{D^i\rho
D^k\rho}{\rho^2v^2}]D_jD_k(\log \rho)\right).
\end{equation*}

(\romannumeral4) For any $i, j, k=1, \cdots, n$, the following identities hold:
\begin{align}
(|X|^{2})_{i} &= 2\langle X, e_{i}\rangle, \label{xi} \\
(|X|^{2})_{ij} &= 2\delta_{ij} - 2h_{ij}\langle X, \nu\rangle, \label{xij} \\
\langle X, \nu\rangle_{i} &= h_{ik}\langle X, e_{k}\rangle, \notag \\
\langle X, \nu\rangle_{ij} &= h_{ijk}\langle X, e_{k}\rangle + h_{ij} - h_{ik}h_{kj}\langle X, \nu\rangle. \notag
\end{align}

 The principal curvature $(\kappa_1,\kappa_2,\ldots,\kappa_n)$ of $M$ are the eigenvalue of the second fundamental
form with respect to the metric satisfying the following equation:
$$\det(h_{ij}-\kappa g_{ij})=0.$$
Equation \eqref{G-eq} can be expressed as differential equations on the radial function
$\rho$ and position vector $X$, respectively. From \eqref{Nor} we have
$$\langle X,\nu\rangle=\rho^2(\rho^2+|\nabla \rho|^2)^{-1/2},$$
and equation \eqref{G-eq} is equivalent to
\begin{equation*}
\frac{\sigma_k}{\sigma_l}(\Lambda(\rho))=f \rho^{b+2q} (\rho^2 + |\nabla \rho|^2)^{-1/2}, \quad \text{on } \mathbb{S}^n.
\end{equation*}

\subsection{$k$-th elementary symmetric functions}
Let $\lambda=(\lambda_1,\cdots,\lambda_n)\in\mathbb{R}^n$, we recall
the definition of elementary symmetric functions for $1\leq k\leq n$,
\begin{equation*}\label{sigma}
\sigma_k(\lambda)= \sum _{1 \le i_1 < i_2 <\cdots<i_k\leq
n}\lambda_{i_1}\lambda_{i_2}\cdots\lambda_{i_k}.
\end{equation*}

\begin{definition}\label{defsigmak}
Let $1\leq k\leq n$ and $\Gamma_k$ be a cone in $\mathbb{R}^n$ determined by
$$\Gamma_k  = \{ \lambda  \in \mathbb{R}^n :\sigma _i (\lambda ) >
0,~\forall~ 1 \le i \le k\}.$$
\end{definition}
Denote $\sigma_{k-1}(\lambda|i)=\frac{\partial
\sigma_k}{\partial \lambda_i}$ and
$\sigma_{k-2}(\lambda|ij)=\frac{\partial^2 \sigma_k}{\partial
\lambda_i\partial \lambda_j}$, then we list some properties of
$\sigma_k$ which will be used later.

\begin{proposition}\label{sigma}
Let $\lambda=(\lambda_1,\cdots,\lambda_n)\in\mathbb{R}^n$ and $1\leq
k\leq n$. Then we have
\begin{enumerate}
\item[(\romannumeral1)]  $\Gamma_1\supset \Gamma_2\supset \cdot\cdot\cdot\supset
\Gamma_n$;

\item[(\romannumeral2)]  $\sigma_{k-1}(\lambda|i)>0$ for $\lambda \in \Gamma_k$ and
$1\leq i\leq n$;

\item[(\romannumeral3)]  $\sigma_k(\lambda)=\sigma_k(\lambda|i)
+\lambda_i\sigma_{k-1}(\lambda|i)$ for $1\leq i\leq n$;

\item[(\romannumeral4)]
$\sum_{i=1}^{n}\frac{\partial[\frac{\sigma_{k}}{\sigma_{l}}]^{\frac{1}{k-l}}}
{\partial \lambda_i}\geq [\frac{C^k_n}{C^l_n}]^{\frac{1}{k-l}}$ for
$\lambda \in \Gamma_{k}$ and $0\leq l<k$;

\item[(\romannumeral5)]  $\Big[\frac{\sigma_k}{\sigma_l}\Big]^{\frac{1}{k-l}}$ are
concave in $\Gamma_k$ for $0\leq l<k$;

\item[(\romannumeral6)] If $\lambda_1\geq \lambda_2\geq \cdot\cdot\cdot\geq \lambda_n$,
then $\sigma_{k-1}(\lambda|1)\leq \sigma_{k-1}(\lambda|2)\leq
\cdot\cdot\cdot\leq \sigma_{k-1}(\lambda|n)$ for $\lambda \in
\Gamma_k$;

\item[(\romannumeral7)]
$\sum_{i=1}^{n}\sigma_{k-1}(\lambda|i)=(n-k+1)\sigma_{k-1}(\lambda)$;

\item[(\romannumeral8)]
If $\lambda_1\geq \lambda_2\geq \cdot\cdot\cdot\geq \lambda_n$,
then
$\lambda_1 \sigma_{k-1}(\lambda |1) \geq \frac{k}{n}\sigma_k(\lambda).$
\end{enumerate}
\end{proposition}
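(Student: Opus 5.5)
Most of the items in Proposition \ref{sigma} are classical, so I will establish them in order from the definitions and the generating function
$$\prod_{i=1}^n(1+t\lambda_i)=\sum_{k=0}^n\sigma_k(\lambda)t^k.$$
The identities are the easy part. Item (\romannumeral3) follows by splitting the monomials of $\sigma_k$ according to whether they contain $\lambda_i$. Item (\romannumeral7) follows by counting: each $(k-1)$-subset appears in $\sigma_{k-1}(\lambda|i)$ for exactly $n-(k-1)$ indices $i$. Item (\romannumeral1) is immediate from the definition of $\Gamma_k$.

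For (\romannumeral2) I will use the standard induction on $k$ and show $\lambda\in\Gamma_k\Rightarrow(\lambda|i)\in\Gamma_{k-1}$ in $\mathbb{R}^{n-1}$. The natural route is through the hyperbolicity of $\sigma_k$ in the direction $(1,\dots,1)$ (G\aa rding's theory). Then $\Gamma_k$ is the connected component of $\{\sigma_k>0\}$ containing $(1,\dots,1)$, and it is convex. The derivative $\partial_{\lambda_i}\sigma_k$ is the polarization of $\sigma_k$ with $e_i$, which lies in $\overline{\Gamma_k}$. G\aa rding's inequality then gives strict positivity.

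For (\romannumeral6), take $\lambda_i\ge\lambda_j$ and write
$$\sigma_{k-1}(\lambda|j)-\sigma_{k-1}(\lambda|i)=(\lambda_i-\lambda_j)\sigma_{k-2}(\lambda|ij)\ge0.$$
The last factor is nonnegative because $(\lambda|ij)\in\Gamma_{k-2}$, which is the previous step applied twice.

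For (\romannumeral8), the identity (\romannumeral3) gives $\lambda_1\sigma_{k-1}(\lambda|1)=\sigma_k-\sigma_k(\lambda|1)$. I then need $\sigma_k(\lambda|1)\le\frac{n-k}{n}\sigma_k$. The plan is to use $\sum_i\sigma_k(\lambda|i)=(n-k)\sigma_k$ together with the monotonicity $\sigma_k(\lambda|1)\le\sigma_k(\lambda|i)$ for the largest $\lambda_1$. This monotonicity holds because $\sigma_k(\lambda|i)-\sigma_k(\lambda|1)=(\lambda_1-\lambda_i)\sigma_{k-1}(\lambda|1i)\ge0$, using (\romannumeral2) twice. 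Averaging over $i$ then gives the bound.

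The concavity (\romannumeral5) is the main obstacle. For $l=0$ it is G\aa rding's theorem: the $k$-th root of a hyperbolic polynomial is concave on its cone. For $l>0$ I plan to follow the classical argument via Newton–Maclaurin and a Hessian computation, as in Caffarelli–Nirenberg–Spruck or Huisken–Sinestrari. First show that $\sigma_k/\sigma_{k-1}$ is concave on $\Gamma_k$. This can be checked directly: the Hessian quadratic form reduces, through the Newton inequality $\sigma_{k-1}^2\ge\sigma_k\sigma_{k-2}$ applied to $(\lambda|i)$, to a nonpositive form. Next write
$$\frac{\sigma_k}{\sigma_l}=\prod_{j=l+1}^{k}\frac{\sigma_j}{\sigma_{j-1}}.$$
This is a product of $k-l$ positive, concave, degree-one homogeneous functions. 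The geometric mean of such functions is concave, since the geometric mean is concave and nondecreasing in each argument. This gives concavity of $(\sigma_k/\sigma_l)^{1/(k-l)}$.

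Item (\romannumeral4) then follows from (\romannumeral5) and homogeneity. Set $F=(\sigma_k/\sigma_l)^{1/(k-l)}$, which is concave and homogeneous of degree one. Concavity gives $F(\mathbf 1)\le F(\lambda)+\sum_iF_i(\lambda)(1-\lambda_i)$. Euler's relation gives $\sum_iF_i\lambda_i=F(\lambda)$. Combining these, $\sum_iF_i(\lambda)\ge F(1,\dots,1)=(C_n^k/C_n^l)^{1/(k-l)}$.
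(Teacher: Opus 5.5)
You treat (i), (iii), (vi) and (vii) correctly. Deriving (iv) from (v) via concavity and Euler's relation is the standard argument. For (v) you are in effect citing the concavity of $\sigma_k/\sigma_{k-1}$, which is no worse than the paper's own "proof", a list of references.

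\textbf{The gap is in (viii).} There you need $\sigma_{k-1}(\lambda|1i)\ge 0$. Applying (ii) twice only places $(\lambda|1i)$ in $\Gamma_{k-2}$, which controls $\sigma_{k-2}(\lambda|1i)$, not $\sigma_{k-1}(\lambda|1i)$. In (vi) the relevant index was $k-2$; here it is one higher. The claimed monotonicity $\sigma_k(\lambda|1)\le\sigma_k(\lambda|i)$ is in fact false. Take $n=3$, $k=2$, $\lambda=(3,2,-1)$. Then $\sigma_1(\lambda)=4$ and $\sigma_2(\lambda)=1$, so $\lambda\in\Gamma_2$. Yet $\sigma_1(\lambda|12)=-1<0$ and $\sigma_2(\lambda|1)=-2>-3=\sigma_2(\lambda|2)$. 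So $\sigma_k(\lambda|1)$ need not be the smallest of the $\sigma_k(\lambda|i)$, and the averaging step fails as written.

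\textbf{Repair: split on the sign of $\sigma_k(\lambda|1)$.}

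If $\sigma_k(\lambda|1)\le 0$ (this covers $k=n$), then (iii) gives $\lambda_1\sigma_{k-1}(\lambda|1)=\sigma_k(\lambda)-\sigma_k(\lambda|1)\ge\sigma_k(\lambda)\ge\frac kn\sigma_k(\lambda)$.

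If $\sigma_k(\lambda|1)>0$, combine this with $(\lambda|1)\in\Gamma_{k-1}$ to get $(\lambda|1)\in\Gamma_k$ in $\mathbb{R}^{n-1}$. Then (ii) applied to $(\lambda|1)$ yields $\sigma_{k-1}(\lambda|1i)>0$ for $i\ne 1$. Your monotonicity argument is now valid. Alternatively, directly
\[
k\,\sigma_k(\lambda|1)=\sum_{i\ne 1}\lambda_i\,\sigma_{k-1}(\lambda|1i)\le\lambda_1\sum_{i\ne 1}\sigma_{k-1}(\lambda|1i)=(n-k)\,\lambda_1\,\sigma_{k-1}(\lambda|1),
\]
which is equivalent to (viii) via (iii).

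\textbf{A smaller point in (ii).} Since $e_i\in\partial\Gamma_k$ and $\sigma_k(e_i)=0$ for $k\ge 2$, G\aa rding's inequality applied with $e_i$ gives only $\sigma_{k-1}(\lambda|i)\ge 0$, not strict positivity. Strictness follows by applying it with $e_i+\epsilon\mathbf{1}\in\Gamma_k$ instead. This gives $\sigma_{k-1}(\lambda|i)+\epsilon(n-k+1)\sigma_{k-1}(\lambda)\ge k\,\sigma_k(\lambda)^{\frac{k-1}{k}}\sigma_k(e_i+\epsilon\mathbf{1})^{\frac1k}\ge c\,\epsilon^{\frac{k-1}{k}}$ with $c>0$. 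Taking $\epsilon$ small makes the right side dominate the $O(\epsilon)$ term, so $\sigma_{k-1}(\lambda|i)>0$.
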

\begin{proof}
All the properties are well known. For example, see Chapter XV in
\cite{Li96} or \cite{Hui99} for proofs of (\romannumeral1), (\romannumeral2), (\romannumeral3), (\romannumeral6), (\romannumeral7) and
(\romannumeral8); see Lemma 2.2.19 in \cite{Ger06} for the proof of (\romannumeral4); see \cite{CNS85} and \cite{Li96} for the proof of (\romannumeral5).
\end{proof}

\begin{proposition}
Let $A=A_{ij}$ be an $n\times n$ symmetric matrix, $\lambda(A)=(\lambda_1,\lambda_2,\cdots,\lambda_n)$ be the eigenvalues of the symmetric matric $A$. Suppose that $A=A_{ij}$ is diagonal, then
\begin{equation*}
\begin{aligned}
&\frac{\partial \lambda_i}{\partial A_{ii}}= 1, \quad \frac{\partial \lambda_k}{\partial A_{ij}} = 0 \text{ otherwise}, \\
&\frac{\partial^2 \lambda_i}{\partial A_{ij} \partial A_{ji}}= \frac{1}{\lambda_i - \lambda_j}, \quad i \ne j \text{ and } \lambda_i \ne \lambda_j,  \\
&\frac{\partial^2 \lambda_i}{\partial A_{kl} \partial A_{pq}}= 0 \text{ otherwise}.
\end{aligned}
\end{equation*}
\end{proposition}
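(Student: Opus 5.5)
This is the classical first- and second-order perturbation theory for a symmetric matrix at a diagonal point, and I would derive both formulas by differentiating the eigenvalue equation along an analytic family. Fix a diagonal $A=\mathrm{diag}(\lambda_1,\dots,\lambda_n)$ and assume $\lambda_i$ is simple, which the mixed formula requires anyway since it divides by $\lambda_i-\lambda_j$. Let $B$ be any symmetric matrix and set $A(t)=A+tB$. By analytic perturbation theory for a simple eigenvalue (implicit function theorem applied to $\det(A(t)-\mu I)=0$), there are real-analytic $\lambda_i(t)$ and unit eigenvectors $v(t)$ with $v(0)=e_i$ and $A(t)v(t)=\lambda_i(t)v(t)$. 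Normalize by $\langle v(t),v(t)\rangle=1$, which gives $\langle v'(0),e_i\rangle=0$.

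\emph{First order.} Differentiate $A(t)v=\lambda_i v$ once at $t=0$:
\[
Bv+Av'=\lambda_i' v+\lambda_i v'.
\]
Pairing with $e_i$ and using the symmetry of $A$ together with $Ae_i=\lambda_i e_i$ gives $\lambda_i'(0)=B_{ii}$. Hence $\partial\lambda_i/\partial A_{ii}=1$, and $\partial\lambda_i/\partial A_{pq}=0$ for every entry other than $(i,i)$. Pairing the same identity with $e_j$ for $j\neq i$ gives $(\lambda_i-\lambda_j)\langle v',e_j\rangle=B_{ji}$. So $v'(0)=\sum_{j\ne i}\frac{B_{ji}}{\lambda_i-\lambda_j}e_j$, where $B_{ji}$ denotes the $(j,i)$ entry.

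\emph{Second order.} Differentiate the equation twice and pair with $e_i$. The terms involving $v''$ cancel, again by symmetry of $A$. Using $\langle v',e_i\rangle=0$, what remains is
\[
\lambda_i''(0)=2\langle Bv',e_i\rangle=2\sum_{j\ne i}\frac{B_{ij}B_{ji}}{\lambda_i-\lambda_j}.
\]
This is the second derivative of $\lambda_i$ along the quadratic form $B\mapsto\sum_{p,q,r,s}\frac{\partial^2\lambda_i}{\partial A_{pq}\partial A_{rs}}B_{pq}B_{rs}$. Reading off coefficients, and treating the entries $A_{ij}$ and $A_{ji}$ as independent variables as the paper's notation suggests, the only nonzero second derivatives are $\frac{\partial^2\lambda_i}{\partial A_{ij}\partial A_{ji}}=\frac{1}{\lambda_i-\lambda_j}$. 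The factor $2$ is absorbed by the two symmetric orderings of the pair $(ij),(ji)$. Every other second derivative vanishes, including $\partial^2\lambda_i/\partial A_{ii}^2$ and all those involving entries not in row or column $i$.

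The main obstacle is bookkeeping rather than substance. One must fix the convention, symmetric entries versus independent entries, so that the factor $2$ lands correctly. One must also justify differentiability, which needs simplicity of $\lambda_i$ (or $\lambda_i\ne\lambda_j$ for the relevant pair together with a smooth choice of eigenvalue branch). To settle the convention cleanly, I would extend the eigenvalue function to nonsymmetric $A$ near the diagonal point, where a simple eigenvalue remains real and analytic. Repeating the same computation with independent perturbations of $A_{ij}$ and $A_{ji}$ then gives $\lambda_i''=\frac{2B_{ij}B_{ji}}{\lambda_i-\lambda_j}$ with $B_{ij}$ and $B_{ji}$ independent, which yields exactly the stated mixed derivative.
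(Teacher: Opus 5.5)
The paper states this proposition without proof, treating it as standard, so there is no argument of the paper's to compare with. Your perturbation-theoretic derivation is correct and supplies one.

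What it adds beyond the bare statement is that it makes explicit two hypotheses the statement leaves implicit.

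\textbf{The convention for the entries.} The symmetric computation $\lambda_i''(0)=2\sum_{j\ne i}B_{ij}^2/(\lambda_i-\lambda_j)$ only determines the symmetrized sum $H_{ij,ij}+H_{ij,ji}+H_{ji,ij}+H_{ji,ji}$. It is your passage to nonsymmetric $B$ that isolates $\partial^2\lambda_i/\partial A_{ij}\partial A_{ji}=1/(\lambda_i-\lambda_j)$ and forces every other second derivative to vanish. That passage is legitimate for two reasons. The cancellation of the $v''$ term and the first-order pairings use only that $A(0)=\mathrm{diag}(\lambda)$ is symmetric, not $A(t)$. And the $e_i$-component of $v'(0)$ drops out of $\lambda_i''$, so the normalization of $v$ is immaterial.

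The independent-entries reading is also the one under which the paper is internally consistent. The chain rule with your values gives $\frac{\partial^2\sigma_m}{\partial A_{ij}\partial A_{ji}}=\frac{\sigma_{m-1}(\lambda|i)-\sigma_{m-1}(\lambda|j)}{\lambda_i-\lambda_j}=-\sigma_{m-2}(\lambda|ij)$, which is exactly the second-derivative formula in Proposition~\ref{diagonal}. Identifying $A_{ij}$ with $A_{ji}$ would instead introduce a factor $2$.

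\textbf{Simplicity of $\lambda_i$.} This is genuinely required, and you should drop your parenthetical fallback ``$\lambda_i\ne\lambda_j$ for the relevant pair together with a smooth choice of eigenvalue branch.'' If $\lambda_i$ is repeated, there is no differentiable branch as a function of the matrix entries at all. Already for $\begin{pmatrix}a&b\\ b&c\end{pmatrix}$ near $0$, the only continuous selections of the eigenvalues are the two functions $\frac{a+c}{2}\pm\sqrt{(\frac{a-c}{2})^2+b^2}$, and neither is differentiable at the origin. So simplicity is the correct hypothesis, and the ``$0$ otherwise'' clause only makes sense under it.
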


\begin{proposition}\label{diagonal}
Suppose $A=A_{ij}$ is diagonal and \(m\) is a positive integer with \(1\le m\le n\), then
\begin{equation*}
\frac{\partial \sigma_m(A)}{\partial A_{ij}}=
  \begin{cases}
  \sigma_{m-1}(A|i), ~&i=j,\\
  0, ~~~&otherwise,
  \end{cases}
\end{equation*}
\begin{equation*}
\frac{\partial^2\sigma_m(A)}{\partial A_{ij}\partial A_{pq}}=
\begin{cases}
\sigma_{m-2}(A|ip), ~~~&i=j,p=q,i\neq p,\\
-\sigma_{m-2}(A|ip), ~~~&i=q,j=p,i\neq j,\\
0,~~~&\mbox{otherwise}.
\end{cases}
\end{equation*}
where  $\sigma_k(A |i)$  is the symmetric function with $A$ deleting the $i$-row and $i$-column and $\sigma_k(A|ij)$ is  the symmetric function with $A$ deleting the $i,j$-rows and $i,j$-columns.
\end{proposition}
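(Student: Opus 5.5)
The statement is purely algebraic, so I will prove it by expanding $\sigma_m(A)$ as a sum of principal minors. Recall $\sigma_m(A)=\sum_{|J|=m}\det A_{JJ}$, where $A_{JJ}$ is the principal submatrix indexed by $J\subset\{1,\dots,n\}$. Each $\det A_{JJ}$ is a polynomial in the entries $A_{ij}$, treated as independent variables. Then $\frac{\partial}{\partial A_{ij}}\det A_{JJ}$ is nonzero only when $i,j\in J$, and in that case it equals the $(i,j)$-cofactor of $A_{JJ}$. Likewise, the second derivative in $A_{ij}$ and $A_{pq}$ is a signed $(m-2)$-minor of $A_{JJ}$, obtained by deleting rows $i,p$ and columns $j,q$. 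It is nonzero only if $i\neq p$, $j\neq q$ and $i,j,p,q\in J$.

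\textbf{First derivatives.} I evaluate at a diagonal matrix. The $(i,j)$-cofactor with $i\neq j$ is the determinant of a matrix that still contains row $j$ but has lost column $j$. In a diagonal matrix, the only nonzero entry of row $j$ sits in column $j$, so this row is zero and the cofactor vanishes. For $i=j\in J$ the cofactor is $\prod_{s\in J\setminus\{i\}}A_{ss}$. Summing over all $J\ni i$ with $|J|=m$ gives $\sigma_{m-1}(A|i)$.

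\textbf{Second derivatives.} Take a diagonal $A$ and the minor obtained by deleting rows $\{i,p\}$ and columns $\{j,q\}$ of $A_{JJ}$.
\begin{itemize}
\item If the remaining row set differs from the remaining column set, some surviving row $r$ has lost its column $r$, and as before that row is zero. So the minor is nonzero only if $\{i,p\}=\{j,q\}$ as sets.
\item Case $i=j$, $p=q$, $i\neq p$: the sign is $+$, as the coefficient of $A_{ii}A_{pp}$ in the Leibniz expansion of $\det A_{JJ}$ shows. The minor is $\prod_{s\in J\setminus\{i,p\}}A_{ss}$.
\item Case $i=q$, $j=p$, $i\neq j$: the term containing $A_{ij}A_{ji}$ comes from the transposition $(ij)$, so the sign is $-1$. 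The remaining factor is again $\prod_{s\in J\setminus\{i,j\}}A_{ss}$.
\end{itemize}
Summing over $J\supset\{i,p\}$ with $|J|=m$ gives $\pm\sigma_{m-2}(A|ip)$ in the two cases. All other index configurations give zero, including $i=p$ or $j=q$, where the monomials are linear in that row or column.

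\textbf{Main obstacle.} The only delicate point is the sign bookkeeping and the convention that $A_{ij}$ and $A_{ji}$ are independent variables. I would handle the sign directly through the Leibniz formula, $\det=\sum_\pi \mathrm{sgn}(\pi)\prod_s A_{s\pi(s)}$. A monomial containing both $A_{ij}$ and $A_{ji}$ and otherwise only diagonal entries comes from $\pi=(ij)$, which has sign $-1$. This settles both cases at once.
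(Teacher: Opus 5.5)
Your proof is correct and complete. The paper gives no proof of this proposition; it quotes it as a standard fact, as it does the preceding statement on derivatives of eigenvalues. So there is no paper argument to compare against.

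Your mechanism is the right one:
\begin{itemize}
\item You write $\sigma_m(A)=\sum_{|J|=m}\det A_{JJ}$ and identify first and second derivatives as signed cofactors and $(m-2)$-minors.
\item At a diagonal matrix you discard every minor in which a surviving row $r$ has lost column $r$. This forces $\{i,p\}=\{j,q\}$.
\item Your Leibniz-formula sign check settles the two remaining cases: permutations fixing $i,p$ give sign $+$, and the transposition $(ij)$ gives sign $-$.
\end{itemize}

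The more common textbook derivation writes $\sigma_m(A)=\frac{1}{m!}\sum\delta^{i_1\cdots i_m}_{j_1\cdots j_m}A_{i_1j_1}\cdots A_{i_mj_m}$ with the generalized Kronecker delta and differentiates twice. At a diagonal matrix this reduces to evaluating $\delta^{ip\,i_3\cdots i_m}_{jq\,i_3\cdots i_m}$. That is the same computation, indexed by ordered tuples rather than by subsets $J$, and your version makes the vanishing mechanism more transparent.

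Make two points explicit in a write-up.
\begin{itemize}
\item State the conventions $\sigma_0=1$ and $\sigma_{-1}=0$. For $m=1$ your sum over $J\supset\{i,p\}$ is empty, which matches $\sigma_{-1}=0$.
\item State that $A_{ij}$ and $A_{ji}$ are treated as independent variables, which you already flag. This is what produces the entry $-\sigma_{m-2}(A|ij)$ for the mixed derivative in $A_{ij}$ and $A_{ji}$. The paper relies on this convention when it combines this proposition with Proposition~\ref{wij} in the proof of Proposition~\ref{L1}.
\end{itemize}
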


The generalized Newton-MacLaurin inequality is as follows.
\begin{proposition}\label{NM}
For $\lambda \in \Gamma_m$ and $m > l \geq 0$, $ r > s \geq 0$, $m
\geq r$, $l \geq s$, then
\begin{align}
\Bigg[\frac{{\sigma _m (\lambda )}/{C_n^m }}{{\sigma _l (\lambda
)}/{C_n^l }}\Bigg]^{\frac{1}{m-l}} \le \Bigg[\frac{{\sigma _r
(\lambda )}/{C_n^r }}{{\sigma _s (\lambda )}/{C_n^s
}}\Bigg]^{\frac{1}{r-s}}. \notag
\end{align}
\end{proposition}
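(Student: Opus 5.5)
The plan is to reduce the generalized Newton--MacLaurin inequality to the classical Maclaurin-type chain of ratios. Write $E_j=\sigma_j(\lambda)/C_n^j$, so $E_0=1$. The key input is Newton's inequality $E_{j-1}E_{j+1}\le E_j^2$. It holds for every $\lambda\in\mathbb{R}^n$, and for $\lambda\in\Gamma_m$ we also have $E_j>0$ for $0\le j\le m$. Taking logarithms, the sequence $a_j=\log E_j$ ($0\le j\le m$) is concave in $j$. Equivalently, the successive ratios $E_j/E_{j-1}$ are nonincreasing in $j$ for $1\le j\le m$.

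Next I use the standard fact about concave sequences: the slope of a chord,
\begin{equation*}
S(s,r)=\frac{a_r-a_s}{r-s},
\end{equation*}
is nonincreasing in each endpoint separately. The reason is that $S(s,r)$ is the average of the consecutive slopes $a_{j}-a_{j-1}$ for $j=s+1,\dots,r$. These slopes are nonincreasing in $j$. So moving either endpoint to the right only adds smaller slopes or drops larger ones, and the average cannot increase. Applying this with $s\le l$ and $r\le m$ gives
\begin{equation*}
S(l,m)\le S(s,m)\le S(s,r).
\end{equation*}
Here the first step moves the left endpoint from $s$ to $l$, and the second moves the right endpoint from $r$ to $m$. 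If $s=l$ or $r=m$, the corresponding step is trivial. Note also $s<r\le m$ and $s\le l<m$, so every chord used has distinct endpoints and all indices lie in $[0,m]$, where positivity holds. Exponentiating $S(l,m)\le S(s,r)$ gives exactly the claimed inequality, since $[E_m/E_l]^{1/(m-l)}=e^{S(l,m)}$.

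The only genuine work is Newton's inequality $E_{j-1}E_{j+1}\le E_j^2$, which I would quote from the same references as Proposition~\ref{sigma}. If a proof is needed, I would use the standard argument. The polynomial $\prod_i(t+\lambda_i)=\sum_j C_n^jE_jt^{n-j}$ has only real roots. By Rolle's theorem, differentiating repeatedly in $t$, and also after the substitution $t\mapsto 1/t$, preserves real-rootedness. This reduces the inequality to a real-rooted quadratic $E_{j-1}t^2+2E_jt+E_{j+1}$, whose nonnegative discriminant is the desired inequality.

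The main point to check is the positivity $E_j>0$ for $j\le m$, needed to take logarithms. It follows directly from the definition of $\Gamma_m$, since $\sigma_j(\lambda)>0$ for all $1\le j\le m$, and $E_0=1$.
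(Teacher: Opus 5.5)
Your proof is correct. Newton's inequality $E_{j-1}E_{j+1}\le E_j^2$ and the positivity $E_j>0$ for $0\le j\le m$ make $j\mapsto\log E_j$ concave on $\{0,\dots,m\}$, and the chord-slope monotonicity step $S(l,m)\le S(s,m)\le S(s,r)$ then gives the stated inequality. The paper gives no argument of its own and only cites Spruck's survey; your derivation is the standard route to this generalized Newton--Maclaurin inequality, written out in full.
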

\begin{proof}
See \cite{S05}.
\end{proof}

\subsection{Derivations on the exterior algebra and properties for $\frac{\sigma_{k}}{\sigma_{l}}(\Lambda)$}\
Let $\mathbf{p} \in \{1, \cdots, n\}$ and $n\geqslant 2$. We use the standard notation for ordered multi-indices
$$ \mathfrak{J}(\mathbf{p},n):=\{I=(i_{1},\cdots,i_{\mathbf{p}})\mid i_{s}~integers~and~1\leqslant i_{1}<\cdots<i_{\mathbf{p}}\leqslant n\}. $$
Set $\mathfrak{J}(0,n)=\{0\}$ and $|I|=\mathbf{p}$ if $I\in \mathfrak{J}(\mathbf{p},n)$. For $I\in \mathfrak{J}(\mathbf{p},n)$,
\begin{enumerate}
\item[(\romannumeral1)]  $\overline{I}$ is the element in $\mathfrak{J}(n - \mathbf{p},n)$ which complements $I$ in $\{1,2,\cdots,n\}$ in the natural increasing order.
\item [(\romannumeral2)] $I-i$ means the multi-index of length $\mathbf{p}-1$ obtained by removing $i$ from $I$ for any $i \in I$.
\item [(\romannumeral3)] $I+j$ means the multi-index of length $\mathbf{p}+1$ obtained by adding $j$ to $I$  for any $j \notin I$.
\item [(\romannumeral4)] $\sigma(I,J)$ is the sign of the permutation which reorders $(I,J)$ in the natural increasing order for any
multi-index $J$ with $I\cap J=\emptyset$. In particular set $\sigma(\overline{0},0):=1$.
\end{enumerate}

\begin{definition}
Let $\mathbf{p} \in \{1, \cdots, n\}$, the $(\mathbf{p}, k)$-cone is defined by
$$\mathcal{P}_{\mathbf{p}, k}=\{\lambda=(\lambda_1, \lambda_2, \cdots, \lambda_n)\in \mathbb{R}^n\mid \sigma_j(\Lambda) >0, \forall~~1\leq j\leq k\},$$
where $\Lambda=(\Lambda_{I_1}, \Lambda_{I_2}, \cdots, \Lambda_{I_N}) \in \mathbb{R}^N$,  and $\Lambda_I=\lambda_{i_1}+\lambda_{i_2}+\cdots+\lambda_{i_\mathbf{p}}$
for any $I=(i_1, i_2, \cdots, i_p)\in \mathfrak{J}(\mathbf{p},n)$.
\end{definition}

Hence we have the following proposition.
\begin{proposition}
A $C^2$-hypersurface $M\subset \mathbb{R}^{n+1}$ is $(\mathbf{p},k)$-convex if and only if  the principal curvatues $\kappa(X) \in \mathcal{P}_{\mathbf{p}, k}$ for any $X\in M$.
\end{proposition}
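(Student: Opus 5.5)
This is essentially a restatement of the definitions, so the plan is to unwind them pointwise.

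Fix a point $X\in M$ and let $\kappa(X)=(\kappa_1,\dots,\kappa_n)$ be its principal curvatures, i.e. the eigenvalues of $h_{ij}$ with respect to $g_{ij}$. Form the vector
$$\Lambda(\kappa(X))=(\Lambda_{I_1}(\kappa),\dots,\Lambda_{I_N}(\kappa))\in\mathbb{R}^N,$$
using the fixed ordering $I_1,\dots,I_N$ of $\mathfrak{J}=\mathfrak{J}(\mathbf{p},n)$. The definition of $\mathcal{P}_{\mathbf{p},k}$ builds, for $\lambda=\kappa(X)$, exactly the same vector $\Lambda$, because in both places $\Lambda_I=\kappa_{i_1}+\cdots+\kappa_{i_\mathbf{p}}$ and the same index set is used. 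The ordering of the $I$'s is irrelevant in any case, since each $\sigma_j$ is symmetric in its $N$ arguments.

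With this identification, the two conditions coincide:
\begin{itemize}
\item $(\mathbf{p},k)$-convexity at $X$ means $\Lambda(\kappa(X))\in\tilde\Gamma_k$, i.e. $\sigma_j(\Lambda(\kappa(X)))>0$ for all $1\le j\le k$;
\item $\kappa(X)\in\mathcal{P}_{\mathbf{p},k}$ is, by definition, the same inequalities.
\end{itemize}
So the two conditions are equivalent at each $X$. Quantifying over all $X\in M$ gives the proposition.

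One point needs care: $\kappa(X)$ is an unordered $n$-tuple, so membership in $\mathcal{P}_{\mathbf{p},k}$ must not depend on how the eigenvalues are labeled. A permutation $\pi$ of $\{1,\dots,n\}$ acting on $\kappa$ induces a permutation of the multi-indices $I\mapsto \pi(I)$. It therefore only permutes the entries of $\Lambda$, and $\sigma_j(\Lambda)$ is unchanged. Hence $\mathcal{P}_{\mathbf{p},k}$ is a symmetric set and the condition is well defined.

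There is no genuine obstacle beyond this bookkeeping.
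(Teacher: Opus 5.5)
Your proposal is correct and matches the paper, which states this proposition right after defining $\mathcal{P}_{\mathbf{p},k}$ and treats it as immediate from the two definitions, exactly as you unwind them pointwise. Your extra remark that relabeling the $\kappa_i$ only permutes the entries of $\Lambda$, so $\sigma_j(\Lambda)$ is unchanged, is a sound piece of bookkeeping that the paper leaves implicit.
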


\begin{definition}
Let $\mathbf{p} \in \{1, \cdots, n\}$ and  $A=\{a_{ij}\}$ be a symmetric matrix. The linear map \(D_A:\Lambda^p\mathbb R^n\to\Lambda^p\mathbb R^n\) induced by \(A\) is defined by
\begin{equation*}
\begin{aligned}
\mathcal{D}_A:&~~~~~~~\Lambda^\mathbf{p}\mathbb{R}^n &&\longrightarrow \Lambda^\mathbf{p}\mathbb{R}^n \\
&v_1\wedge\cdots\wedge v_\mathbf{p} &&\longmapsto \sum_{i=1}^{\mathbf{p}} v_1 \wedge \cdots \wedge v_{i-1}\wedge (Av_i)\wedge v_{i+1}  \cdots\wedge v_\mathbf{p}.
\end{aligned}
\end{equation*}
\end{definition}

 Fix an orthonormal basis $(e_{1},\cdot\cdot\cdot,e_{n})$ of $\mathbb{R}^n$ and the corresponding basis $\{e_{I}\}_{I\in\mathfrak{J}(\mathbf{P},n)}$ of $\Lambda^\mathbf{P}\mathbb{R}^n$, where $e_I:=e_{i_1}\wedge\cdots\wedge e_{i_\mathbf{P}}$ for any $I=(i_1,\cdots i_\mathbf{P}) \in \mathfrak{J}(\mathbf{P},n)$. Obviously, the eigenvalues of    $\mathcal{D}_A$ can be written as
 $\Lambda_{I}=(\Lambda_{I_1}, \Lambda_{I_2}, \cdots, \Lambda_{I_N})$ if  $\lambda=(\lambda_1,\lambda_2,\cdots,\lambda_n)$ are eigenvalues of symmetric matrix $A$.


It is worth emphasizing that given any orthonormal basis of $\mathbb{R}^n$, $\mathcal{D}_A$ has a matrix representation with respect to the induced basis which has components being linear combinations of the entries of $A$.

\begin{proposition}\label{wij}
Let $A=\{a_{ij}\}$ be a symmetric matrix, the corresponding matrix $W:=\{ W_{IJ}\}_{I,J\in \mathfrak{J}(\mathbf{p},n)}$ of linear derivation $\mathcal{D}_A$
in the canonical basis $\{ e_{I_1}, e_{I_2}, \cdots, e_{I_N}\}$ reads
\begin{equation}\label{propw}
W_{IJ}=
\begin{cases}
\sum_{i\in I}a_{ii}, ~~~&I=J,\\
\sigma(i,I-i)\sigma(j,J-j)a_{ij}, ~~~&I=i+K, J=j+K, |K|=\mathbf{p}-1, i\neq j,\\
0,~~~&\mbox{otherwise}.
\end{cases}
\end{equation}
and hence we have
\begin{equation*}
\frac{\partial W_{IJ}}{\partial a_{ij}}=
\begin{cases}
1, ~~~&i=j,I=J,i\in I,\\
\sigma(i,I-i)\sigma(j,J-j), ~~~&I=i+K,J=j+K,|K|=\mathbf{p}-1,i\neq j,\\
0,~~~&\mbox{otherwise}.
\end{cases}
\end{equation*}
\end{proposition}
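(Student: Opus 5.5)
The plan is a direct computation of $\mathcal{D}_A$ on the basis vectors $e_J$, using multilinearity and antisymmetry of the wedge product, followed by differentiation of the resulting linear expressions.

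Fix $J=(j_1,\dots,j_{\mathbf{p}})$ and write $Ae_{j_s}=\sum_{m=1}^n a_{mj_s}e_m$. By definition,
\begin{equation*}
\mathcal{D}_A e_J=\sum_{s=1}^{\mathbf{p}}\sum_{m=1}^n a_{mj_s}\, e_{j_1}\wedge\cdots\wedge e_{j_{s-1}}\wedge e_m\wedge e_{j_{s+1}}\wedge\cdots\wedge e_{j_{\mathbf{p}}}.
\end{equation*}
We split the inner sum into three cases.
\begin{enumerate}
\item[(a)] If $m=j_s$, the term is $a_{j_sj_s}e_J$, and summing over $s$ gives the diagonal entry $W_{JJ}=\sum_{j\in J}a_{jj}$.
\item[(b)] If $m\in J\setminus\{j_s\}$, a repeated factor appears and the term vanishes.
\item[(c)] If $m=i\notin J$, set $j=j_s$ and $K=J-j$. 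The wedge equals $\pm e_I$ with $I=i+K$, and $I$ differs from $J$ in exactly one index.
\end{enumerate}
Consequently $W_{IJ}$, the $e_I$-coefficient of $\mathcal{D}_Ae_J$, vanishes unless $I=J$ or $|I\cap J|=\mathbf{p}-1$. This accounts for the ``otherwise'' case of \eqref{propw}.

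The main step is the sign in case (c). Moving $e_j$ from position $s$ to the front of $e_J$ gives $e_J=\sigma(j,J-j)\,e_j\wedge e_K$. This holds because $\sigma(j,J-j)=(-1)^{s-1}$ is exactly the sign of the permutation reordering $(j,K)$ increasingly. Replacing $e_j$ by $e_i$ in the first slot, the corresponding term of $\mathcal{D}_Ae_J$ is
\begin{equation*}
\sigma(j,J-j)\,a_{ij}\,e_i\wedge e_K=\sigma(j,J-j)\sigma(i,I-i)\,a_{ij}\,e_I .
\end{equation*}
Two points need checking here. First, the index $i$ can be inserted in only one place, so for the given pair $(I,J)$ only this single term $(s,m)$ contributes, and no other term produces $e_I$. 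Second, the choice of $i,j,K$ is uniquely determined by $(I,J)$. Together these give the middle case of \eqref{propw}. Symmetry $a_{ij}=a_{ji}$ ensures consistency with the symmetric roles of $I$ and $J$, so $W$ is symmetric.

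Finally, each $W_{IJ}$ is linear in the entries of $A$, so its derivatives can be read off directly. On the diagonal, $\partial W_{II}/\partial a_{ii}=1$ exactly when $i\in I$. Off the diagonal, the derivative with respect to $a_{ij}$ is the sign coefficient computed above. All other derivatives are zero, viewing $a_{ij}$ as the relevant variable; the symmetric counterpart $a_{ji}$ is handled by the same convention. The only real obstacle is the sign bookkeeping; everything else is immediate.
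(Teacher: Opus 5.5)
Your computation is correct and complete. The paper gives no argument of its own here; it only cites Proposition 2.7 of \cite{GongTu2026}. Your term-by-term expansion of $\mathcal{D}_A e_J$ is a self-contained proof of that cited fact: it uses the sign identity $e_J=\sigma(j,J-j)\,e_j\wedge e_K$ with $\sigma(j,J-j)=(-1)^{s-1}$, together with the observation that for $|I\cap J|=\mathbf{p}-1$ exactly one pair $(s,m)$ produces $e_I$.

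One tightening for the derivative table: your case (c) never uses $a_{ij}=a_{ji}$. So $W_{IJ}=\sigma(i,I-i)\sigma(j,J-j)\,a_{ij}$, with $i\in I\setminus J$ as the row index, holds for arbitrary $A$. This justifies treating the $n^2$ entries as independent variables, as the paper does elsewhere, and gives in particular $\partial W_{IJ}/\partial a_{ji}=0$ for $I\neq J$. That replaces your vaguer appeal to ``the same convention''.
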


\begin{proof}
See Proposition 2.7 in \cite{GongTu2026}.
\end{proof}

Then we have the following proposition.
\begin{proposition}\label{f24}
Let $\kappa = (\kappa_1, \ldots, \kappa_n)$ be the principal curvature of $M$. Suppose that $M\subset \mathbb{R}^{n+1}$ is $(\mathbf{p},k)$-convex, $\Lambda(\kappa) = (\Lambda_{I_1}, \ldots, \Lambda_{I_N})$ and $0 \leq l < k \leq N$, then the operator
$\left(\frac{\sigma_k}{\sigma_l}(\Lambda(\kappa)) \right)^{\frac{1}{k-l}}$
is elliptic and concave with respect to $\kappa$. Moreover we have
$$\sum_{i=1}^n \frac{\partial \left(\frac{\sigma_k}{\sigma_l}(\Lambda(\kappa)) \right)^{\frac{1}{k-l}}}{\partial \kappa_i} \geq \mathbf{p}\left( \frac{C_N^k}{C_N^l}\right)^{\frac{1}{k-l}}>0.$$
\end{proposition}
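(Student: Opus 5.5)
The plan is to view the operator as a composition of a linear map followed by the classical Hessian quotient. Write
$$G(\kappa)=F(\Lambda(\kappa)), \qquad F(\mu)=\Big(\frac{\sigma_k}{\sigma_l}(\mu)\Big)^{\frac{1}{k-l}}, \quad \mu\in\tilde{\Gamma}_k\subset\mathbb{R}^N,$$
where $\Lambda:\mathbb{R}^n\to\mathbb{R}^N$, $\kappa\mapsto(\Lambda_{I_1}(\kappa),\dots,\Lambda_{I_N}(\kappa))$, is linear. Since $M$ is $(\mathbf{p},k)$-convex, $\Lambda(\kappa)\in\tilde{\Gamma}_k$ at every point, so all properties in Proposition \ref{sigma}, applied in dimension $N$ with $\tilde\Gamma_k$ in place of $\Gamma_k$, are available at $\mu=\Lambda(\kappa)$.

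\emph{Step 1 (ellipticity).} By the chain rule,
$$\frac{\partial G}{\partial\kappa_i}=\sum_{I\ni i}\frac{\partial F}{\partial\mu_I}(\Lambda(\kappa)),$$
because $\partial\Lambda_I/\partial\kappa_i$ equals $1$ if $i\in I$ and $0$ otherwise. Each $\partial F/\partial\mu_I$ is strictly positive on $\tilde\Gamma_k$. To see this, note that
$$\partial_{\mu_I}\Big(\frac{\sigma_k}{\sigma_l}\Big)=\frac{\sigma_{k-1}(\mu|I)\,\sigma_l-\sigma_k\,\sigma_{l-1}(\mu|I)}{\sigma_l^2}.$$
Using Proposition \ref{sigma}(iii), the numerator becomes $\sigma_{k-1}(\mu|I)\sigma_l(\mu|I)-\sigma_k(\mu|I)\sigma_{l-1}(\mu|I)$, which is positive by the standard Newton--MacLaurin argument on $\mu$ with $\mu_I$ removed (the same argument that gives Proposition \ref{sigma}(ii)). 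Alternatively, I would simply quote the standard fact that $\sigma_k/\sigma_l$ is elliptic on $\Gamma_k$. Each set $\{I: i\in I\}$ is nonempty, so $\partial G/\partial\kappa_i>0$.

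\emph{Step 2 (concavity).} $F$ is concave on the convex cone $\tilde\Gamma_k$ by Proposition \ref{sigma}(v). A concave function composed with a linear map is concave, so $G$ is concave on the convex set $\Lambda^{-1}(\tilde\Gamma_k)=\mathcal{P}_{\mathbf{p},k}$. At the second-derivative level,
$$\frac{\partial^2 G}{\partial\kappa_i\partial\kappa_j}=\sum_{I\ni i}\sum_{J\ni j}F_{IJ},$$
which is the quadratic form $D^2F$ restricted to the image of $\Lambda$, and is therefore nonpositive. Concavity in the matrix sense, with respect to $h_{ij}$, then follows from the usual symmetric-function principle. I expect this to be routine.

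\emph{Step 3 (the lower bound).} Summing over $i$,
$$\sum_{i=1}^n\frac{\partial G}{\partial\kappa_i}=\sum_I\frac{\partial F}{\partial\mu_I}\cdot\#\{i: i\in I\}=\mathbf{p}\sum_{I}\frac{\partial F}{\partial\mu_I},$$
since each $I$ contains exactly $\mathbf{p}$ indices. Proposition \ref{sigma}(iv) in dimension $N$ gives $\sum_I\partial F/\partial\mu_I\ge (C_N^k/C_N^l)^{1/(k-l)}$, and the stated inequality follows.

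The only delicate point is Step 1's positivity of each partial derivative of the quotient. The rest is bookkeeping with the linear structure of $\Lambda$.
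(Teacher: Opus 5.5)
Your proposal is correct and takes essentially the same route as the paper, which simply defers to Lemma 2.5 of \cite{Zhou-2024}: you regard the operator as $\bigl(\frac{\sigma_k}{\sigma_l}\bigr)^{\frac{1}{k-l}}$ on $\tilde{\Gamma}_k\subset\mathbb{R}^N$ composed with the linear map $\kappa\mapsto\Lambda(\kappa)$, obtain ellipticity and concavity from the chain rule and the classical properties in dimension $N$, and get the lower bound from $\sum_i\partial_{\kappa_i}=\mathbf{p}\sum_I\partial_{\Lambda_I}$ together with Proposition~\ref{sigma}(iv). This last identity is exactly the one the paper itself uses later, in the form $\sum_I G^{II}=\frac{1}{\mathbf{p}}\sum_i F^{ii}$, in the proof of Theorem~\ref{c1-877}.
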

\begin{proof}
See Lemma 2.5 in \cite{Zhou-2024}.
\end{proof}

Let $\kappa=(\kappa_1,\cdots,\kappa_n)$ are the eigenvalues of the matrix $\{a_{ij}\}$, $W=\{W_{IJ}\}$ be the transition matrix of liear derivation  $\mathcal{D}_A$ and $\Lambda= (\Lambda_1,\cdots,\Lambda_N)$ be the the eigenvalues of $W$.
For convenience, we make the following notations for $\frac{\sigma_{k}}{\sigma_{l}}(\Lambda)$
\begin{equation*}
\begin{aligned}
&F(W)=\frac{\sigma_k}{\sigma_l}(\Lambda(W)), \quad F^{ij}=\frac{\partial {F}}{\partial a_{ij}}, \quad {F}^{ij, rs}=\frac{\partial^2 {F}}{\partial a_{ij} \partial a_{rs}}.\\
\end{aligned}
\end{equation*}
The following lemma can be found in \cite{ge}.

\begin{proposition}\label{f25}
For any symmetric matrix $(\eta_{ij})$, we have
$$
F^{ij,kl} \eta_{ij} \eta_{kl} = F^{ii,jj} \eta_{ii} \eta_{jj} + \sum_{i \neq j} \frac{F^{ii} - F^{jj}}{a_{ii} - a_{jj}} \eta_{ij}^2.
$$
\end{proposition}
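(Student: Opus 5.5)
The plan is to reduce the statement to the classical second-derivative formula for spectral functions of symmetric matrices, and prove that formula from the eigenvalue-derivative proposition above.

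\textbf{Step 1: $F$ is a symmetric spectral function of $A$.} Write $F(W(A))=f(\kappa(A))$, where $\kappa(A)=(\kappa_1,\dots,\kappa_n)$ are the eigenvalues of $A$ and
$$f(\kappa):=\frac{\sigma_k}{\sigma_l}\big(\Lambda(\kappa)\big).$$
This holds because, by Proposition \ref{wij} and the remarks preceding it, the eigenvalues of $W$ are exactly the sums $\Lambda_I(\kappa)$. A permutation of $(\kappa_1,\dots,\kappa_n)$ only permutes the multi-indices $I\in\mathfrak{J}(\mathbf{p},n)$, so it permutes the entries of $\Lambda$. Since $\sigma_k$ and $\sigma_l$ are symmetric, $f$ is a smooth symmetric function on the relevant open set of the $(\mathbf{p},k)$-cone. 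On the other hand, $F$ is a rational function of the entries of $W$, which are linear in the $a_{ij}$, so $F$ is smooth in $A$.

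\textbf{Step 2: chain rule at a diagonal $A$ with distinct eigenvalues.} Assume first that $a_{11},\dots,a_{nn}$ are pairwise distinct, and differentiate along $A+t\eta$. The chain rule gives
$$\frac{d^2}{dt^2}F=\sum_{i,j} f_{ij}\,\dot\kappa_i\dot\kappa_j+\sum_i f_i\,\ddot\kappa_i .$$
The proposition on derivatives of eigenvalues of a diagonal matrix gives $\dot\kappa_i=\eta_{ii}$, so in particular $F^{ii}=f_i$ and $F^{ii,jj}=f_{ij}$. The same proposition gives the second derivatives: the only nonzero ones are $\partial^2\kappa_i/\partial a_{ij}\partial a_{ji}$, and the pair is counted in both orders $(ij,ji)$ and $(ji,ij)$. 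Hence
$$\ddot\kappa_i=\sum_{j\ne i}\frac{2\eta_{ij}^2}{\kappa_i-\kappa_j}.$$
Substituting and symmetrizing the double sum in $i\leftrightarrow j$ yields
$$\sum_i f_i\,\ddot\kappa_i=\sum_{i\neq j}\frac{f_i-f_j}{\kappa_i-\kappa_j}\,\eta_{ij}^2 ,$$
which is exactly the claimed identity, since $\kappa_i=a_{ii}$ and $f_i=F^{ii}$.

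\textbf{Step 3: repeated eigenvalues (main obstacle).} When some $a_{ii}=a_{jj}$, the quotient in the formula must be read as a limit. Because $f$ is symmetric and smooth, $f_i-f_j$ vanishes on the hyperplane $\{\kappa_i=\kappa_j\}$. Therefore $(f_i-f_j)/(\kappa_i-\kappa_j)$ extends continuously, with value $f_{ii}-f_{ij}$ on that hyperplane. Both sides of the identity are continuous in the diagonal entries, since the left side is a second derivative of the smooth function $F$. I would therefore approximate $A$ by diagonal matrices with distinct entries and pass to the limit. The only care needed is to justify this limit and to fix the interpretation of the quotient in the degenerate case. Alternatively, one can simply cite the general result for smooth symmetric spectral functions (Andrews, Gerhardt), as the paper does with \cite{ge}.
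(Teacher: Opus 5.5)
Your proof is correct. The paper gives no argument of its own for this proposition and only refers to \cite{ge}; what you wrote is the standard proof of that cited lemma: the chain rule through simple eigenvalues at a diagonal $A$ with distinct entries, then a density argument in which the quotient is read as $F^{ii,ii}-F^{ii,jj}$ when $a_{ii}=a_{jj}$. Your remark that $F$ is a rational function of the $a_{ij}$ supplies the continuity of the left-hand side needed for that limit, so nothing is missing.
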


Finally, we prove the ``inverse convexity'' of the operator $-\left[\frac{\sigma_{k}}{\sigma_{l}}(\Lambda(W))\right]^{-\frac{1}{k-l}}$, which is crucial to establish the Constant Rank Theorem.

\begin{proposition}\label{L1}
Let $\kappa=(\kappa_1,\cdots,\kappa_n)\in {\Gamma}_{n}$ be the eigenvalues of the matrix $A=\{a_{ij}\}$.
$W=\{W_{IJ}\}$ is the symmetric matrix of the linear derivation $\mathcal{D}_A$ associated with $A$, as defined in \eqref{propw}.
Then $-\left[\frac{\sigma_{k}}{\sigma_{l}}(\Lambda(W))\right]^{-\frac{1}{k-l}}$ is ``inverse convex" respect to the matrix
$\{a_{ij}\}$, that is, for any $n \times n$ symmetric matrix $\{\xi_{ij}\}$,
\begin{equation}\label{in-1}
\sum_{i,j,r,s}\left(\frac{\partial^{2}\left(-\left[\frac{\sigma_{k}}{\sigma_{l}}(\Lambda(W))\right]^{-\frac{1}{k-l}}\right)}{\partial a_{ij}\partial a_{rs}}
+2\frac{\partial\left(-\left[\frac{\sigma_{k}}{\sigma_{l}}(\Lambda(W))\right]^{-\frac{1}{k-l}}\right)}{\partial a_{ir}}a^{js}\right)\xi_{ij}\xi_{rs} \geq 0,
\end{equation}
where $\{a^{ij}\}=\{a_{ij}\}^{-1}. $
\end{proposition}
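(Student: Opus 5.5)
The plan is to recognize \eqref{in-1} as the second variation of a composed function and reduce it to a concavity statement about eigenvalues. Set $G(A):=-\left[\frac{\sigma_k}{\sigma_l}(\Lambda(W(A)))\right]^{-\frac{1}{k-l}}$ on positive definite $A$, and define $H(B):=G(B^{-1})$ for $B>0$.

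\textbf{Step 1: (in-1) is convexity of $H$.} Put $A=B^{-1}$ and differentiate along a line $B+t\eta$. Then $\dot A=-A\eta A$ and $\ddot A=2A\eta A\eta A$. Writing $\xi:=A\eta A$, we have $A\eta A\eta A=\xi A^{-1}\xi$, so $(\ddot A)_{ij}=2\xi_{ir}a^{rs}\xi_{sj}$. Hence
\begin{equation*}
\frac{d^2}{dt^2}H(B+t\eta)\Big|_{t=0}=G^{ij,rs}\xi_{ij}\xi_{rs}+2G^{ij}\xi_{ir}a^{rs}\xi_{sj}.
\end{equation*}
After relabeling indices and using the symmetry of $G^{ij}$, this is exactly the left side of \eqref{in-1}. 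Since $\eta\mapsto A\eta A$ is a bijection of symmetric matrices, \eqref{in-1} for all $\xi$ is equivalent to convexity of $H$ on positive definite matrices. Equivalently, $B\mapsto \left[\frac{\sigma_k}{\sigma_l}(\Lambda(W(B^{-1})))\right]^{-\frac{1}{k-l}}$ is concave.

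\textbf{Step 2: reduce to eigenvalues.} The eigenvalues of $W(A)$ are the sums $\Lambda_I(\kappa)$, so they depend only on the eigenvalues $\kappa$ of $A$. Thus $H(B)$ is an orthogonally invariant function of $B$. It equals $-\phi(\mu)$, where $\mu_i=1/\kappa_i>0$ are the eigenvalues of $B$ and
\begin{equation*}
\phi(\mu)=\left[\frac{\sigma_k}{\sigma_l}\big(\Lambda(1/\mu)\big)\right]^{-\frac{1}{k-l}},
\end{equation*}
which is symmetric in $\mu$. By the classical fact (Ball/Davis; cf. Andrews, Gerhardt \cite{Ger06}) that a symmetric function of eigenvalues is concave on symmetric matrices iff it is concave on the eigenvalue vector, it suffices to prove that $\phi$ is concave on the positive orthant.

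\textbf{Step 3: concavity of $\phi$ by composition.} Let $g=(\sigma_k/\sigma_l)^{1/(k-l)}$ on $\mathbb{R}^N_+$. For each $I$, set $h_I(\mu):=\big(\sum_{i\in I}\mu_i^{-1}\big)^{-1}$. Then $\Lambda_I(1/\mu)=1/h_I(\mu)$, and so $\phi(\mu)=g_*(h(\mu))$ with $g_*(y):=1/g(1/y)$.

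\begin{itemize}
\item Each $h_I$ is positive, concave and $1$-homogeneous. It is a scaled harmonic mean, i.e.\ the reciprocal of a positive convex $1$-homogeneous function on the positive orthant.
\item Since $\sigma_j(1/y)=\sigma_{N-j}(y)/\sigma_N(y)$, we get $g_*(y)=\left[\frac{\sigma_{N-l}}{\sigma_{N-k}}(y)\right]^{\frac{1}{k-l}}$. This is concave on $\mathbb{R}^N_+\subset\Gamma_{N-l}$ by Proposition \ref{sigma}(v), because the index gap is $(N-l)-(N-k)=k-l$.
\item $g_*$ is nondecreasing in each $y_I$. Indeed $g$ is increasing in each slot (Proposition \ref{sigma}(ii), applied with the quotient via Proposition \ref{f24}, or directly since the formula above has positive partial derivatives on the positive orthant).
\end{itemize}

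A concave function that is nondecreasing in each argument, composed with componentwise concave maps, is concave. Hence $\phi$ is concave, $H$ is convex, and \eqref{in-1} follows. The hypothesis $\kappa\in\Gamma_n$ guarantees that all $\mu_i>0$ and $\Lambda_I>0$, so every function above lives on the positive orthant where the cited facts apply.

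\textbf{Main obstacle.} The delicate point is Step 2: justifying the passage from concavity in eigenvalues to concavity in matrices for this composed operator, including points with repeated eigenvalues. I would cite the standard theorem for orthogonally invariant functions. Alternatively, I would verify it directly via Proposition \ref{f25}, checking that the off-diagonal terms $\frac{F^{ii}-F^{jj}}{a_{ii}-a_{jj}}\xi_{ij}^2+2F^{ii}a^{jj}\xi_{ij}^2$ are nonnegative, which is the usual inverse-concavity criterion once the diagonal part is handled by Step 3.
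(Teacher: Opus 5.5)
Your proof is correct, but it reaches \eqref{in-1} by a different route from the paper.

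\textbf{The paper's route.} The paper never leaves the matrix level. It takes Andrews' inverse-convexity of $-\bigl[\frac{\sigma_k}{\sigma_l}\bigr]^{-\frac{1}{k-l}}$, as a function of the $N\times N$ matrix $W$, as a black box. It pulls this back along the linear map $A\mapsto W(A)$ by the chain rule at a diagonal $A$. Set $\hat{\xi}_{IJ}=\sum_{i,j}\frac{\partial W_{IJ}}{\partial a_{ij}}\xi_{ij}$. The paper then checks by hand that the $A$-level term $2\widetilde{G}^{PQ}\frac{\partial W_{PQ}}{\partial a_{ir}}a^{js}\xi_{ij}\xi_{rs}$ dominates the $W$-level term $2\widetilde{G}^{IR}W^{JS}\hat{\xi}_{IJ}\hat{\xi}_{RS}$. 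For the off-diagonal entries it uses $\sum_{s\in I-i+j}a_{ss}\geq a_{jj}$. For the diagonal ones it uses Cauchy--Schwarz, $\frac{(\sum_{i\in I}\xi_{ii})^2}{\sum_{i\in I}a_{ii}}\leq\sum_{i\in I}\frac{\xi_{ii}^2}{a_{ii}}$.

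\textbf{Your route.} You recognize \eqref{in-1} as the second variation of $B\mapsto G(B^{-1})$. You pass to eigenvalues once, via the Davis--Lewis theorem. You then prove concavity of $\phi=g_*\circ h$ by monotone composition. This needs only the scalar concavity of $(\sigma_{N-l}/\sigma_{N-k})^{\frac{1}{k-l}}$ on $\mathbb{R}^N_+$, not the matrix-level lemma of Andrews.

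\textbf{Comparison.} The two proofs rest on the same ingredients: inverse-concavity of the quotient, and inverse-concavity of the linear functions $\Lambda_I$. In fact the paper's Cauchy--Schwarz step is exactly the diagonal part of the concavity of your harmonic means $h_I$. Your version avoids all the bookkeeping with $\partial W_{IJ}/\partial a_{ij}$ and the signs $\sigma(i,I-i)$. The paper's version works directly with matrix Hessians on the $A$ side, given the matrix-level statement on the $W$ side, so it never invokes an eigenvalue-to-matrix transfer principle for $A$.

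\textbf{Two minor remarks.}
\begin{itemize}
\item The Davis--Lewis theorem only needs $\phi$ to be symmetric and concave on the permutation-invariant convex set $\mathbb{R}^n_+$. Repeated eigenvalues therefore cause no difficulty, and your fallback via the off-diagonal criterion is unnecessary.
\item Your parenthetical reason for the concavity of $h_I$ is wrong as written. $\sum_{i\in I}\mu_i^{-1}$ is $(-1)$-homogeneous, not $1$-homogeneous, and the reciprocal of a convex function need not be concave. Argue instead that $h_I$ is positive and $1$-homogeneous with convex superlevel set $\{\sum_{i\in I}\mu_i^{-1}\leq 1\}$, or simply cite the concavity of the harmonic mean.
\end{itemize}
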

\begin{proof}
Since $\kappa \in {\Gamma}_n$, then $\Lambda \in  {\Gamma}_N$, and $W$ is positive definite.
It is well known that the function $-\left[\frac{\sigma_{k}}{\sigma_{l}}(\Lambda(W))\right]^{-\frac{1}{k-l}}$ is ``inverse convex" with respect to $W$; see Lemma 3.4 of \cite{BA-07}. Consequently, for any $N\times N$ symmetric   matrix $\{\xi_{IJ}\}$, the following inequality holds:
\begin{equation}\label{in-2}
\sum_{I,J,R,S\in \mathfrak{J}}\left(\frac{\partial^{2}\left(-\left[\frac{\sigma_{k}}{\sigma_{l}}(\Lambda(W))\right]^{-\frac{1}{k-l}}\right)}{\partial W_{IJ}\partial W_{RS}}
+2\frac{\partial\left(-\left[\frac{\sigma_{k}}{\sigma_{l}}(\Lambda(W))\right]^{-\frac{1}{k-l}}\right)}{\partial W_{IR}}W^{JS}\right)\xi_{IJ}\xi_{RS}\geq0.
\end{equation}
where $\{W^{JS}\}=\{W_{JS}\}^{-1}. $
For convenience, introduce the notations
\begin{equation*}
\begin{aligned}
\widetilde{G}^{IJ}=\frac{\partial {\left(-\left[\frac{\sigma_{k}}{\sigma_{l}}(\Lambda(W))\right]^{-\frac{1}{k-l}} \right)}}{\partial W_{IJ}}, \quad \widetilde{G}^{IJ, RS}=\frac{\partial^2 {{\left(-\left[\frac{\sigma_{k}}{\sigma_{l}}(\Lambda(W))\right]^{-\frac{1}{k-l}} \right)}}}{\partial W_{IJ} \partial W_{RS}}.
\end{aligned}
\end{equation*}

Suppose $\{a_{ij}\}$ is diagonal, then so is $W$. It follows from \eqref{in-2}, Proposition~\ref{diagonal} and Proposition~\ref{wij}  that
\begin{align*}
&\sum_{i,j,r,s}\left(\frac{\partial^{2}\left(-\left[\frac{\sigma_{k}}{\sigma_{l}}(\Lambda(W))\right]^{-\frac{1}{k-l}}\right)}{\partial a_{ij}\partial a_{rs}}
+2\frac{\partial\left(-\left[\frac{\sigma_{k}}{\sigma_{l}}(\Lambda(W))\right]^{-\frac{1}{k-l}}\right)}{\partial a_{ir}}a^{js}\right)\xi_{ij}\xi_{rs} \\
=&\sum_{I,J,R,S\in \mathfrak{J}}\widetilde{G}^{IJ, RS}\left(\sum_{i,j}\frac{\partial W_{IJ}}{\partial a_{ij}}\xi_{ij}\right)\left(\sum_{r,s}\frac{\partial W_{RS}}{\partial a_{rs}}\xi_{rs}\right)
+2\sum_{i,j,r,s}\sum_{P,Q\in \mathfrak{J}}\widetilde{G}^{PQ}\frac{\partial W_{PQ}}{\partial a_{ir}}\xi_{ij}\xi_{rs}{a^{js}}\\
\geq&-2\sum_{I,J,R,S\in \mathfrak{J}}\widetilde{G}^{IR}W^{JS}\left(\sum_{i,j}\frac{\partial W_{IJ}}{\partial a_{ij}}\xi_{ij}\right)\left(\sum_{r,s}\frac{\partial W_{RS}}{\partial a_{rs}}\xi_{rs}\right)
+2\sum_{i,j,r,s}\sum_{P,Q\in \mathfrak{J}}\widetilde{G}^{PQ}\frac{\partial W_{PQ}}{\partial a_{ir}}\xi_{ij}\xi_{rs}{a^{js}}\\
=&-2\sum_{I,J\in \mathfrak{J}}\widetilde{G}^{II}\frac{\left(\sum_{i,j}\frac{\partial W_{IJ}}{\partial a_{ij}}\xi_{ij}\right)^2}{W_{JJ}}
+2\sum_{i,j,r}\sum_{P\in \mathfrak{J}}\widetilde{G}^{PP}\frac{\partial W_{PP}}{\partial a_{ir}}\frac{\xi_{ij}\xi_{rj}}{a_{jj}}\\
=&-2\sum_{I\in \mathfrak{J}} \widetilde{G}^{II}\frac{\left({\sum_i\frac {\partial W_{II}}{\partial a_{ii}}\xi_{ii}}\right)^2}{{W_{II}}}
-2\sum_{I\in \mathfrak{J}} \widetilde{G}^{II}\sum _{|J\cap I|=\mathbf{p}-1}\frac{\left({\sum_{i,j}\frac {\partial W_{IJ}}{\partial a_{ij}}\xi_{ij}}\right)^2}{W_{JJ}}
+2\sum_{I\in \mathfrak{J}} \widetilde{G}^{II}\sum_{i,j}\frac {\partial W_{II}}{\partial a_{ii}}\frac {\xi_{ij}^2}{a_{jj}}\\
=&2\sum_{I\in \mathfrak{J}} \widetilde{G}^{II}\left\{-\frac{\left(\sum_{i\in I}\xi_{ii}\right)^2}{\sum_{i\in I}a_{ii}}-\sum_{i\in I}\sum_{j\in{\bar I}}\frac{\xi_{ij}^2}{\sum_{s\in I-i+j}a_{ss}}
+\sum_{i\in I}\frac{\xi_{ii}^2}{a_{ii}}+\sum_{i\in I}\left(\sum_{j\in {I-i}}\frac{\xi_{ij}^2}{a_{jj}}+\sum_{j\in {\bar I}}\frac{\xi_{ij}^2}{a_{jj}}
\right)\right\}\\
\geq&2\sum_{I\in \mathfrak{J}} \widetilde{G}^{II}\left\{-\frac{\left(\sum_{i\in I}\frac{\xi_{ii}}{\sqrt{a_{ii}}}\sqrt{a_{ii}}\right)^2}{\sum_{i\in I}a_{ii}}+\sum_{i\in I}\frac{\xi_{ii}^2}{a_{ii}}\right\}\\
\geq&2\sum_{I\in \mathfrak{J}} \widetilde{G}^{II}\left\{-\frac{\sum_{i\in I}\frac{\xi_{ii}^2}{{a_{ii}}}\sum_{i\in I}{a_{ii}}}{\sum_{i\in I}a_{ii}}+\sum_{i\in I}\frac{\xi_{ii}^2}{a_{ii}}\right\}\\
\geq&0.
\end{align*}
Then \eqref{in-1} holds.
\end{proof}


\section{Existence and uniqueness in nonhomogeneous case}
In this section, we obtain the existence and uniqueness of $(\mathbf{p}, k)$-convex (or strictly convex) hypersurface for  curvature equation \eqref{G-eq} in case $-b-q-k+l>0$. In order to prove the main result, a priori estimates and the Constant Rank Theorem are establish as follows.

\subsection{$C^1$ estimates}

\begin{theorem}\label{c0}
Let $M\subset \mathbb{R}^{n+1}$ be a closed, star-shaped and   $(\mathbf{p}, k)$-convex hypersurface satisfying  equation \eqref{G-eq} with a positive function $f \in C^2(\mathbb{S}^n)$.
If $-b-q-k+l>0$, then
$$
\left(\frac{C_N^l \min\limits_{ \mathbb{S}^n} f}{C_N^k \mathbf{p}^{k-l}}\right)^{\frac{1}{-b-q-k+l}}
\leq \rho(x)
\leq \left(\frac{C_N^l \max\limits_{ \mathbb{S}^n} f}{C_N^k \mathbf{p}^{k-l}}\right)^{\frac{1}{-b-q-k+l}},       \quad \forall~x \in \mathbb{S}^n.
$$
\end{theorem}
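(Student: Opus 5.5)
This is a $C^0$ estimate, and I will prove it by the maximum principle applied at the extremal points of $\rho$. Let $x_0\in\mathbb{S}^n$ be a maximum point of $\rho$, and set $X_0=\rho(x_0)x_0$. At $x_0$ we have $D\rho=0$, so $v=1$ and $\nu=e_\rho=x_0$. Hence $\langle X_0,\nu\rangle=\rho(x_0)=|X_0|$. Moreover $D^2\rho\le 0$ there, and by \eqref{gij} and \eqref{hij} we get $g_{ij}=\rho^2\sigma_{ij}$ and $h_{ij}=-D_iD_j\rho+\rho\sigma_{ij}\ge \rho\sigma_{ij}$. Therefore every principal curvature satisfies $\kappa_i\ge 1/\rho(x_0)$.

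Geometrically, the sphere of radius $\rho(x_0)$ touches $M$ from outside. It follows that each component obeys $\Lambda_I(\kappa)\ge \mathbf{p}/\rho(x_0)$.

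The key step is to turn this componentwise bound into a bound on $\frac{\sigma_k}{\sigma_l}(\Lambda)$.
\begin{itemize}
\item The operator $\left(\frac{\sigma_k}{\sigma_l}\right)^{1/(k-l)}$ is elliptic on $\tilde\Gamma_k$ (Proposition~\ref{sigma} (iv)--(v), or Proposition~\ref{f24}).
\item $\Lambda(\kappa)\in\tilde\Gamma_k$.
\item $\Lambda(\kappa)-\frac{\mathbf{p}}{\rho}(1,\dots,1)$ lies in the closed positive cone $\overline{\Gamma}_N\subset\overline{\tilde\Gamma}_k$.
\end{itemize}
Together these give monotonicity along the segment joining the two points; the segment stays in $\tilde\Gamma_k$ because the cone is convex and contains $\Gamma_N$. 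Hence
$$\frac{\sigma_k}{\sigma_l}(\Lambda(\kappa))\ge \frac{\sigma_k}{\sigma_l}\Big(\frac{\mathbf{p}}{\rho}(1,\dots,1)\Big)=\frac{C_N^k}{C_N^l}\,\mathbf{p}^{k-l}\rho(x_0)^{-(k-l)}.$$
Inserting this into \eqref{G-eq} at $X_0$, where the right-hand side equals $f(x_0)\rho^{b}\rho^{q}$, gives
$$f(x_0)\rho(x_0)^{b+q}\ge \frac{C_N^k}{C_N^l}\mathbf{p}^{k-l}\rho(x_0)^{-(k-l)},\qquad\text{i.e.}\qquad \rho(x_0)^{-b-q-k+l}\le \frac{C_N^l f(x_0)}{C_N^k\mathbf{p}^{k-l}}.$$
Since $-b-q-k+l>0$, taking the positive root yields the upper bound with $\max f$.

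At a minimum point $x_1$ everything is symmetric. There $D^2\rho\ge0$, so $h_{ij}\le\rho\sigma_{ij}$, which gives $\kappa_i\le 1/\rho(x_1)$ and $\Lambda_I\le \mathbf{p}/\rho(x_1)$. The point $\frac{\mathbf{p}}{\rho}(1,\dots,1)$ then dominates $\Lambda$ componentwise, and the same monotonicity argument reverses the inequality. This produces the lower bound with $\min f$.

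The only delicate point is the monotonicity of $\frac{\sigma_k}{\sigma_l}$ under componentwise increase within $\tilde\Gamma_k$. I will handle it by integrating the positive derivative along the segment. Positivity of $\partial(\sigma_k/\sigma_l)/\partial\Lambda_I$ on $\tilde\Gamma_k$ is standard (this is the ellipticity of Proposition~\ref{f24}), and convexity of $\tilde\Gamma_k$ keeps the segment inside the cone, so this step is routine.
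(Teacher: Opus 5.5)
Your proof is correct and is essentially the paper's argument. At a maximum (resp.\ minimum) point of $\rho$ you use $\nabla\rho=0$ to get $\langle X,\nu\rangle=|X|=\rho$, and the sign of $\nabla^2\rho$ to get $\kappa_i\ge 1/\rho$ (resp.\ $\kappa_i\le 1/\rho$). You then compare $\frac{\sigma_k}{\sigma_l}(\Lambda)$ with its value $\frac{C_N^k}{C_N^l}\mathbf{p}^{k-l}\rho^{-(k-l)}$ at $\frac{\mathbf{p}}{\rho}(1,\dots,1)$, exactly as the paper does. The one addition is that you justify the monotonicity step, via convexity of $\tilde\Gamma_k$ and positivity of $\partial(\sigma_k/\sigma_l)/\partial\Lambda_I$, which the paper uses without comment. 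That justification matters at the minimum point, where $\Lambda$ need not lie in the positive cone.
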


\begin{proof}
Assume $\rho$ attains its maximum value at $x_0 \in \mathbb{S}^n$, then
$$\nabla \rho(x_0) = 0  \quad \text{and} \quad \nabla^2 \rho(x_0) \leq 0.$$
From \eqref{gij}-\eqref{hij}, the principal curvatures of $M$ at $x_0$ satisfy
$$\kappa_i(x_0) \geq \frac{1}{\max\limits_{\mathbb{S}^n} \rho}, \quad \text{for } 1 \leq i \leq n.$$
Hence
$$\mathbf{p}^{k-l} \frac{C_N^k}{C_N^l} \left( \frac{1}{\max\limits_{\mathbb{S}^n} \rho} \right)^{k-l}
\leq\frac{\sigma_k(\Lambda(\kappa))}{\sigma_l(\Lambda(\kappa))}
=f \rho^{b+2q} (\rho^2 + |\nabla \rho|^2)^{-\frac{q}{2}}
\leq (\max_{ \mathbb{S}^n} f)(\max_{\mathbb{S}^n} \rho)^{b+q},$$
which implies
$$(\max_{\mathbb{S}^n} \rho)^{-b-q-k+l} \leq \frac{C_N^l}{\mathbf{p}^{k-l}C_N^k } \cdot \max_{ \mathbb{S}^n} f.$$
Similar argument yields
$$\frac{C_N^l \min\limits_{ \mathbb{S}^n} f}{\mathbf{p}^{k-l}C_N^k } \leq (\min_{\mathbb{S}^n} \rho)^{-b-q-k+l}.$$
\end{proof}

In order to get the gradient estimate, we  set  $u=-\log \rho$. Then the first and second fundamental form of $M$  is
\begin{equation*}
\begin{aligned}
g_{i j} & =e^{-2 u}\left(\delta_{i j}+u_{i} u_{j}\right), \\
h_{i j} & =e^{-u}\left(1+|\nabla u|^{2}\right)^{-\frac{1}{2}}\left(\delta_{i j}+u_{i} u_{j}+u_{i j}\right),
\end{aligned}
\end{equation*}
and
$$\left[g^{i j}\right]^{\frac{1}{2}}=e^{u}\Big(\delta_{i j}-\frac{u_{i} u_{j}}{\sqrt{1+|\nabla u|^{2}}\big(1+\sqrt{1+|\nabla u|^{2}}\big)}\Big).$$
If we set
\begin{equation*}
\begin{aligned}
\bar{g}^{i j} & =\Big(\delta_{i j}-\frac{u_{i} u_{j}}{\sqrt{1+|\nabla u|^{2}}\big(1+\sqrt{1+|\nabla u|^{2}}\big)}\Big), \\
\bar{h}_{l m} & =\delta_{l m}+u_{l} u_{m}+u_{l m}, \\
a_{i j} & =\bar{g}^{i l} \bar{h}_{l m} \bar{g}^{m j},
\end{aligned}
\end{equation*}
then the symmetric matrix  $\{A_{i j}\}=\{[g^{i k}]^{\frac{1}{2}}{h}_{k l}[g^{l j}]^{\frac{1}{2}}\}$  behaves as
$$A_{i j}=e^{u}\left(1+|\nabla u|^{2}\right)^{-\frac{1}{2}} a_{i j},$$
and equation \eqref{G-eq} is equivalent to
\begin{equation}\label{c121}
\begin{aligned}
F=\frac{\sigma_{k}}{\sigma_{l}}(\Lambda(a_{ij}))=\frac{\sigma_{k}}{\sigma_{l}}(\Lambda(W))=fe^{(-b-q-k+l)u}\left(1+|\nabla u|^{2}\right)^{\frac{k-l-q}{2}},
\end{aligned}
\end{equation}
where $W$ is the symmetric matrix associated with  $\{a_{ij}\}$, as defined in \eqref{propw}.

\begin{theorem}\label{c1-877}
Let $0 \leq l <k\leq n$, $\mathbf{p}>1$,  $M\subset \mathbb{R}^{n+1}$ be a closed, star-shaped and  $(\mathbf{p}, k)$-convex hypersurface satisfying the equation \eqref{G-eq} with a positive function $f\in C^{2}(\mathbb{S}^{n})$.  If $-b-q-k+l>0$, then
$$\max_{\mathbb{S}^{n}}|\nabla \log\rho|\leq C,$$
where the constant $C$ depends on $b, q, \mathbf{p}, k,l$ and $\max_{\mathbb{S}^{n}}\frac{|\nabla f|}{f}$. In particular, we have
$$1\leq\frac{\max_{\mathbb{S}^{n}}\rho}{\min_{\mathbb{S}^{n}}\rho}\leq e^C.$$
\end{theorem}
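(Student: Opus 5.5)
We argue by the maximum principle applied to $\varphi=\frac12|\nabla u|^2$ on $\mathbb{S}^n$, where $u=-\log\rho$, using the equation in the form \eqref{c121}. Set $\beta:=-b-q-k+l>0$. At a maximum point $x_0$ of $\varphi$ we may assume $|\nabla u|(x_0)$ is large, otherwise we are done.

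\textbf{Normalizing the frame at $x_0$.} Rotate the frame so that $u_1=|\nabla u|$ and $u_i=0$ for $i\geq2$. The first-order condition $\varphi_i=\sum_j u_ju_{ji}=0$ gives $u_{1i}=0$ for all $i$. Rotating further among $e_2,\dots,e_n$, we make $(u_{ij})_{i,j\ge2}$ diagonal. Then
\[
\bar h_{ij}=\delta_{ij}+u_iu_j+u_{ij}
\]
is diagonal at $x_0$. Moreover $\bar g^{ij}$ is diagonal with $\bar g^{11}=(1+|\nabla u|^2)^{-1/2}$ and $\bar g^{ii}=1$ for $i\ge2$. Hence $a_{ij}$ is diagonal, with
\[
a_{11}=\frac{1+u_1^2}{1+u_1^2}=1,\qquad a_{ii}=1+u_{ii}\quad (i\ge2).
\]
Consequently $W$ is diagonal with eigenvalues $\Lambda_I=\sum_{i\in I}a_{ii}$.

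\textbf{Differentiating the equation.} Differentiate \eqref{c121} in the direction $e_1$:
\[
F^{ij}(a_{ij})_1=\Big(\frac{f_1}{f}-\beta u_1+(k-l-q)\frac{u_1u_{11}}{1+u_1^2}\Big)F .
\]
At $x_0$ we have $u_{11}=0$, so the last term vanishes. The left side involves the third derivatives $u_{ij1}$, which we commute to $u_{1ij}$ via the Ricci identity on $\mathbb{S}^n$. We then use the second-order condition $0\ge F^{ij}\varphi_{ij}$ with
\[
\varphi_{ij}=\sum_m u_{mi}u_{mj}+\sum_m u_mu_{mij},
\]
and the curvature term equals $u_1^2(\delta_{ij}-\delta_{i1}\delta_{j1})$. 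This yields
\[
0\ge \sum_i F^{ii}u_{ii}^2+u_1^2\sum_{i\ge2}F^{ii}+u_1F^{ij}u_{1ij}+\dots
\]
The derivative of the $\bar g$ factors also contributes terms, but these contain $u_{1i}$ and therefore vanish at $x_0$, except for terms of the form $u_1u_{ii}$ times bounded factors. Substituting the differentiated equation produces
\[
0\ge \sum_{i\ge2}F^{ii}\big(u_{ii}^2+u_1^2+O(u_1|u_{ii}|)\big)+u_1^2\big(\beta-|\nabla f|/(fu_1)\big)F .
\]

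\textbf{Main obstacle.} The hardest step is controlling $\sum_{i\ge2}F^{ii}$ from below relative to $F$, since the direction $e_1$ carries $a_{11}=1$. Here we use $\mathbf p>1$: every $I$ containing $1$ also contains some $i\ge2$, so $F^{ii}$ for $i\ge2$ dominates a fixed fraction of $\sum_i F^{ii}$. Precisely, $F^{ii}=\sum_{I\ni i}\partial F/\partial\Lambda_I$, which gives $\sum_{i\ge2}F^{ii}\ge\frac{\mathbf p-1}{\mathbf p}\sum_iF^{ii}$. In addition, homogeneity gives
\[
\sum_i F^{ii}a_{ii}=(k-l)F.
\]
We then handle the cross terms by completing squares,
\[
u_{ii}^2+O(u_1|u_{ii}|)\ge -Cu_1^2 ,
\]
or, better, by rewriting $u_{ii}=a_{ii}-1$ so that these terms become a multiple of $\sum F^{ii}a_{ii}=(k-l)F$ plus nonnegative squares. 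The resulting inequality takes the form
\[
0\ge u_1^2\big(\beta F-C\,|\nabla f|F/(f u_1)\big)+(\text{nonnegative}),
\]
which forces $u_1\le C\max\frac{|\nabla f|}{f}/\beta$. Thus $|\nabla\log\rho|=|\nabla u|\le C$.

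\textbf{Oscillation bound.} Integrating the gradient bound along a great-circle arc of length at most $\pi$ gives $\log(\max\rho/\min\rho)\le \pi C$, which is the ratio bound (after renaming the constant).

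If the signs of the cross terms do not close, the fallback is to use $\varphi=\log|\nabla u|^2+\gamma u$ with $\gamma$ small, which adds a favorable $\gamma F^{ii}u_{ii}$ term.
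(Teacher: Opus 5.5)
Your plan is the paper's: the maximum principle for $|\nabla u|^2$ with $u=-\log\rho$, the frame with $u_1=|\nabla u|$, $u_{1i}=0$, $a_{11}=1$, $a_{ii}=1+u_{ii}$, the Ricci identity producing $u_1^2\sum_{i\ge2}F^{ii}$, and $\beta F u_1^2$ absorbing the $\nabla f$ term. But the step you call the main obstacle is where your argument, as written, does not close.

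\textbf{The step that fails.} You assert that the $\bar g$ factors leave cross terms $O(u_1|u_{ii}|)$, and neither way you propose to absorb them works.
\begin{itemize}
\item Completing the square leaves $(1-C)u_1^2\sum_{i\ge2}F^{ii}$ with an unknown $C$. If $C>1$, this must be absorbed by $\beta F u_1^2$, which needs an upper bound $\sum_i F^{ii}\lesssim F$. No such bound is available at the gradient stage, and the comparison $\sum_{i\ge2}F^{ii}\ge\frac{\mathbf p-1}{\mathbf p}\sum_iF^{ii}$ you prove does not supply it.
\item The substitution $u_{ii}=a_{ii}-1$ only turns signed terms $c\,u_1u_{ii}$, with $c$ independent of $i$, into multiples of $(k-l)F$. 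It does nothing for terms with absolute values.
\end{itemize}
With the fallback left unexecuted, the decisive inequality is not established.

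\textbf{The fix.} These cross terms do not exist. At $x_0$, every $e_1$-derivative of $\bar g^{ij}$, of $\bar h_{ij}-u_{ij}=\delta_{ij}+u_iu_j$, and of $1+|\nabla u|^2$ carries a factor $u_{m1}=0$. What you need is to contract with the true linearized coefficients $\widetilde G^{ij}=F^{lm}\bar g^{il}\bar g^{mj}$, that is, $\widetilde G^{11}=F^{11}/(1+u_1^2)$ and $\widetilde G^{ii}=F^{ii}$ for $i\ge2$. The differentiated equation then gives exactly
\[
\widetilde G^{ij}u_{ij1}=F\Bigl(\frac{f_1}{f}+\beta u_1\Bigr).
\]
The sign is $+\beta u_1$; your $-\beta u_1$ is a slip that you silently correct later.

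If you contract $\varphi_{ij}$ with $F^{ij}$ instead, you are left with the extra term $F^{11}\frac{u_1^2}{1+u_1^2}\,u_1u_{111}$. This term is $\le 0$ at $x_0$, since $u_1u_{111}=\varphi_{11}$, so it has the wrong sign. Using $0\ge\widetilde G^{ij}\varphi_{ij}$ and the Ricci identity instead gives
\[
0\ \ge\ \sum_{i\ge2}F^{ii}\bigl(u_{ii}^2+u_1^2\bigr)+\beta F u_1^2-F u_1\frac{|\nabla f|}{f}.
\]
This is exactly the paper's inequality just before it completes the square. Since $F^{ii}>0$, it already gives $u_1\le\beta^{-1}\max_{\mathbb S^n}|\nabla f|/f$. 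Neither $\mathbf p>1$ nor any bound on $\sum_{i\ge2}F^{ii}$ is needed for this theorem.

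\textbf{Comparison with the paper.} The paper goes further: it bounds $\sum_{i\ge2}F^{ii}(u_{ii}^2+u_1^2)$ from below via $\sum_IG^{II}\Lambda_I^2$, $\sum_IG^{II}$ and AM--GM, obtaining $2(k-l)\sqrt{(\mathbf p-1)/\mathbf p}\,(1+|\nabla u|^2)^{1/2}$. That is where $\mathbf p>1$ enters, and it is what carries the homogeneous case $\beta=0$ later. Your oscillation step, integrating along an arc of length at most $\pi$, is correct and slightly more careful than the paper's.
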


\begin{proof}
Define the function $Q=|\nabla u|^{2}$, and assume that $Q$ attains its maximum at a point $x_{0}$. Choosing a local coordinate frame field around $x_{0}$, such that $$u_{1}=|\nabla u|,\quad \text{and}~~\{u_{ij}\}_{2\leq i,j\leq n}~\text{is ~ diagonal}.$$
Next, all the calculations are performed at the point $x_0$. First
$$0=Q_{i}=2\sum_mu_{m}u_{mi}=2u_{1}u_{1i},$$
which implies
$$ u_{1i}=0~~~\text{for all}~~~i=1,\cdots,n.$$

Then $\{u_{ij}\}$ is diagonal at $x_{0}$ and the matrix $\{\bar{g}^{ij}\},\{\bar{h}_{lm}\},$ and $\{a_{ij}\}$ are also diagonal at $x_{0}$,
$$ \bar{g}^{11}=\frac{1}{\sqrt{1+|\nabla u|^{2}}},\quad\bar{h}_{11}=1+|\nabla u|^{2},\quad a_{11}=1, $$
and for $2\leq i\leq n,$
$$\bar{g}^{ii}=1,\quad\bar{h}_{ii}=a_{ii}=1+u_{ii}.$$
Then ${F}^{ij}:=\frac{\partial\left[\frac{\sigma_{k}}{\sigma_{l}}(\Lambda(W))\right]}{\partial a_{ij}}$ is diagonal at $x_{0}$. Differentiating equation \eqref{c121} on both sides,
\begin{equation}\label{c111a}
\begin{aligned}
{F}^{ij}a_{ijs}=e^{(-b-q-k+l)u}(1+|\nabla u|^{2})^{\frac{k-l-q}{2}}\left[(-b-q-k+l)fu_{s}+f_{s}\right],
\end{aligned}
\end{equation}
On the other hand,  $(\bar{g}^{il})_s=0$, therefore,
$$u_{s}a_{ijs}=u_{s}(\bar{g}^{il}\bar{h}_{lm}\bar{g}^{mj})_{s}=\bar{g}^{il}u_{s}\bar{h}_{lms}\bar{g}^{mj}=\bar{g}^{il}u_{s}u_{lms}\bar{g}^{mj}.$$
From the above, we derive
\begin{equation}\label{c111b}
\begin{aligned}
\sum^n_{s=1}u_{s}{F}^{ij}a_{ijs}=&\sum^n_{s=1}{F}^{ij}\bar{g}^{il}u_{s}u_{lms}\bar{g}^{mj}\\
=&\sum^n_{s=1}{F}^{ij}\bar{g}^{il}u_{s}\left(u_{slm}-u_{s}\delta_{lm}+u_{l}\delta_{sm}\right)\bar{g}^{mj}\\
=&\sum^n_{s=1}{F}^{ij}\bar{g}^{il}\bar{g}^{mj}u_{s}u_{slm}-|\nabla u|^{2}{F}^{ij}\bar{g}^{il}\bar{g}^{lj}+{F}^{ij}\bar{g}^{il}\bar{g}^{mj}u_{m}u_{l}.
\end{aligned}
\end{equation}
Set $\widetilde{G}^{ij}={F}^{lm}\overline{g}^{il}\overline{g}^{mj}$, combining \eqref{c111a} and \eqref{c111b}, we obtain
$$\sum^n_{s=1}\widetilde{G}^{lm}u_{s}u_{slm}=|\nabla u|^{2}\sum_{i=2}^{n}\widetilde{G}^{ii}+e^{(-b-q-k+l)u}(1+|\nabla u|^{2})^{\frac{k-l-q}{2}}\left[(-b-q-k+l)f|\nabla u|^{2}+|\nabla u|f_{1}\right].$$
Further computation gives
\begin{align}
 0\geq \frac{1}{2}\widetilde{G}^{ij}Q_{ij}=&\sum^n_{m=1}\widetilde{G}^{ij}u_{mi}u_{mj}+\sum^n_{m=1}\widetilde{G}^{ij}u_{m}u_{mij} \notag\\
 \geq&\sum_{i=2}^{n}\widetilde{G}^{ii}u_{ii}^{2}+|\nabla u|^{2}\sum_{i=2}^{n}\widetilde{G}^{ii}+e^{(-b-q-k+l)u}(1+|\nabla u|^{2})^{\frac{k-l-q}{2}}f_{1}|\nabla u|\notag\\
 &+(-b-q-k+l)e^{(-b-q-k+l)u}(1+|\nabla u|^{2})^{\frac{k-l-q}{2}}f|\nabla u|^{2}\notag\\
 \geq &\sum_{i=2}^{n}{F}^{ii}u_{ii}^{2}+|\nabla u|^{2}\sum_{i=2}^{n}{F}^{ii}- F\frac{|\nabla f|}{f}|\nabla u|+(-b-q-k+l)F|\nabla u|^{2}\notag\\
 =&\sum_{i=2}^{n}{F}^{ii}u_{ii}^{2}+(-b-q-k+l)F\left(|\nabla u|-\frac{|\nabla f|}{2(-b-q-k+l)f}\right)^2\notag\\
 &-\frac{|\nabla f|^2F}{4(-b-q-k+l)f^2}+|\nabla u|^{2}\sum_{i=2}^{n}{F}^{ii}\notag\\
\geq& ~|\nabla u|^{2}\sum_{i=2}^{n}{F}^{ii}+\sum_{i=2}^{n}{F}^{ii}u_{ii}^{2}-\frac{|\nabla f|^2F}{4(-b-q-k+l)f^2}\label{equiiur-01},
\end{align}
since $\widetilde{G}^{ii}={F}^{ii}$ for $2\leq i\leq n$ and $-b-q-k+l>0$.
At $x_0$, the eigenvalues of matrix $\{a_{ij}\}$ satisfy
$$\kappa_{1}=1,\quad\kappa_{2}=1+u_{22},\quad\cdots,\quad\kappa_{n}=1+u_{nn}.$$
Without loss of generality, assume  that $\kappa_2\geq \kappa_3\geq \cdots\geq \kappa_n$.
Set $G^{II}:=\frac{\partial \left[\frac{\sigma_k}{\sigma_l}(\Lambda)\right]}{\partial \Lambda_I}$, then
\begin{align}
&~|\nabla u|^{2}\sum_{i=2}^{n}{F}^{ii}+\sum_{i=2}^{n}{F}^{ii}u_{ii}^{2}\label{equiiur-001}\\
=&~|\nabla u|^{2} \left( \sum_{\{I|1\notin I\}}  \sum_{i\in I} G^{II} + \sum_{\{I|1\in I\}}  \sum_{i\in I, i\neq1} G^{II}\right) + \sum_{\{I|1\notin I\}} G^{II} \sum_{i\in I} \kappa_i^2-2\sum_{\{I|1\notin I\}} G^{II} \sum_{i\in I} \kappa_i \notag\\
&+\sum_{\{I|1\notin I\}}  \sum_{i\in I} G^{II}+ \sum_{\{I|1\in I\}} G^{II} \sum_{i\in I, i\neq1} \kappa_i^2-2\sum_{\{I|1\in I\}} G^{II} \sum_{i\in I, i\neq1} \kappa_i +\sum_{\{I|1\in I\}}  \sum_{i\in I, i\neq1} G^{II}\notag\\
\geq&~\mathbf{p} (1+|\nabla u|^2)  \sum_{\{I|1\notin I\}} G^{II}+(\mathbf{p}-1) (1+|\nabla u|^2) \sum_{\{I|1\in I\}} G^{II}+\frac{1}{\mathbf{p}} \sum_{\{I|1\notin I\}} G^{II} \Lambda_I^2\notag\\
&-2\sum_{\{I|1\notin I\}} G^{II}\Lambda_I+\frac{1}{\mathbf{p}} \sum_{\{I|1\in I\}} G^{II} \Lambda_I^2- \sum_{\{I|1\in I\}} G^{II}-2\sum_{\{I|1\in I\}} G^{II}\Lambda_I+2 \sum_{\{I|1\in I\}} G^{II}
\notag\\
=&~\frac{1}{\mathbf{p}} \sum_{I} G^{II} \Lambda_I^2 -2\sum_{I} G^{II}\Lambda_I+(\mathbf{p}-1) (1+|\nabla u|^2) \sum_{I} G^{II}+ \sum_I G^{II} +|\nabla u|^2 \sum_{\{I|1\notin I\}} G^{II}\notag.
\end{align}
Applying Proposition \ref{f24}, we derive
\begin{equation*}\label{equiiur-011111111}
\begin{aligned}
\sum_{I} G^{II}=\frac{1}{\mathbf{p}}\sum_{i} F^{ii}
=&\frac{k-l}{\mathbf{p}} F^{1-\frac{1}{k-l}}
\sum_{i} \frac{\partial \left(\frac{\sigma_k}{\sigma_l}(\Lambda(\kappa)) \right)^{\frac{1}{k-l}}}{\partial \kappa_i} \\
\geq&(k-l) \left( \frac{C_N^k}{C_N^l}\right)^{\frac{1}{k-l}}F^{1-\frac{1}{k-l}}.
\end{aligned}
\end{equation*}
On the other hand, a direct computation  shows that
\begin{equation}\label{equiiur-011}
\begin{aligned}
 \sum_{I} G^{II} \Lambda_I^2
=&(l+1)\frac{\sigma_k(\Lambda)\sigma_{l+1}(\Lambda)}
{\sigma_l^2(\Lambda)}-(k+1)\frac{\sigma_{k+1}(\Lambda)}{\sigma_l(\Lambda)}\\
\geq&(N-l)\left(\frac{C_N^l}{C_N^k}\right)^{\frac{1}{k-l}} F^{1+\frac{1}{k-l}}-(N-k)\left(\frac{C_N^l}{C_N^k}\right)^{\frac{1}{k-l}} F^{1+\frac{1}{k-l}}\\
=&(k-l)\left(\frac{C_N^l}{C_N^k}\right)^{\frac{1}{k-l}} F^{1+\frac{1}{k-l}}.
\end{aligned}
\end{equation}
Putting \eqref{equiiur-001} - \eqref{equiiur-011} into \eqref{equiiur-01}, we obtain
\begin{equation*}\label{c11111}
\begin{aligned}
 0\geq&\frac{k-l}{\mathbf{p}}\left(\frac{C_N^l}{C_N^k}\right)^{\frac{1}{k-l}} F^{\frac{1}{k-l}}
+ (\mathbf{p}-1)(k-l) \left(\frac{C_N^k}{C_N^l}\right)^{\frac{1}{k-l}}(1+|\nabla u|^2) F^{-\frac{1}{k-l}} \\
&-\frac{|\nabla f|^2}{4(-b-q-k+l)f^2}-2(k-l)\\
\geq&2(k-l)\sqrt{\frac{\mathbf{p}-1}{\mathbf{p}}}(1+|\nabla u|^{2})^{\frac{1}{2}}-\frac{|\nabla f|^2}{4(-b-q-k+l)f^2}-2(k-l),
\end{aligned}
\end{equation*}
the last inequality follows from the Cauchy-Schwarz inequality.
Hence
$$|\nabla u| \leq C,$$
where $C$ depends on $b, q, \mathbf{p}, k,l$ and $\max_{\mathbb{S}^{n}} \frac{|\nabla f|}{f}$. Let $\widetilde{\rho}=\frac{\rho}{\min_{\mathbb{S}^{n}} \rho}$, since $|\nabla \log \widetilde{\rho}|=|\nabla \log \rho| \leq C$ and $\min_{\mathbb{S}^{n}} \log \widetilde{\rho}=0$, we obtain
$$1\leq\max_{\mathbb{S}^{n}} \widetilde{\rho}=\frac{\max_{\mathbb{S}^{n}}\rho}{\min_{\mathbb{S}^{n}} \rho} \leq e^C.$$
\end{proof}

\subsection{Curvature estimates}\

In this section, we consider the curvature estimates for equation \eqref{G-eq}.
When $2<k\leq C_{n-1}^{\mathbf{p}-1}+1$ and $0 \leq l<\min\{C_{n-1}^{\mathbf{p}-1}, k\}$,  the following  results can be directly obtained from  Theorem 1.4 and 1.6 in \cite{Zhou-2024}.


\begin{theorem}\label{c2}
Let  $2<k\leq C_{n-1}^{\mathbf{p}-1}+1$, $1\leq\mathbf{p}<n$ and $0 \leq l<\min\{C_{n-1}^{\mathbf{p}-1}, k\}$,  $M\subset \mathbb{R}^{n+1}$ be a closed, star-shaped and  $(\mathbf{p}, k)$-convex hypersurface satisfying the equation \eqref{G-eq} with a positive function $f^{\frac{1}{k-l}}\in C^{2}(\mathbb{S}^{n})$, then
$$|\kappa_i|\leq C, \quad \forall~~~X \in M,~~~1\leq i \leq n.$$
where the constant $C$ depends on $n, k, l, \mathbf{p},q,|\rho|_{C_1},|f^{\frac{1}{k-l}}|_{C_2},\max_{\mathbb{S}^n}f,\min_{\mathbb{S}^n}f$.
\end{theorem}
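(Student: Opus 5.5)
The plan is to reduce Theorem \ref{c2} to the global curvature estimates of Zhou \cite{Zhou-2024} (Theorems 1.4 and 1.6), which treat
\[
\frac{\sigma_k}{\sigma_l}(\Lambda(\kappa))=\psi(X,\nu)
\]
for $(\mathbf{p},k)$-convex star-shaped hypersurfaces. To do this, I will rewrite \eqref{G-eq} in that form and check Zhou's structural hypotheses.

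First I set
\[
\psi(X,\nu):=f\Big(\frac{X}{|X|}\Big)|X|^b\langle X,\nu\rangle^q ,
\]
which is defined on a neighbourhood of the relevant portion of $\mathbb{R}^{n+1}\times\mathbb{S}^n$. By Theorem \ref{c0} and Theorem \ref{c1-877}, $\rho$ is pinched between two positive constants and $|\nabla\rho|\le C$. Hence
\[
\langle X,\nu\rangle=\rho^2(\rho^2+|\nabla\rho|^2)^{-1/2}
\]
is bounded above and below by positive constants depending only on $|\rho|_{C^1}$. So $(X,\nu)$ ranges over a compact set $K$ on which $\langle X,\nu\rangle\ge c_0>0$ and $|X|\in[r_1,r_2]$. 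Consequently $\psi^{\frac{1}{k-l}}$ lies in $C^2(K)$, with norm controlled by $|f^{\frac1{k-l}}|_{C^2}$, $b$, $q$, $|\rho|_{C^1}$ and $\min f$. In particular $\psi$ is positive, with positive lower bound $\min f\, r_1^{b} c_0^{q}$ (or the analogous bound with signs of $b,q$ accounted for).

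Second, I will verify the index conditions. Zhou's theorems require exactly $2<k\le C_{n-1}^{\mathbf{p}-1}+1$ and $0\le l<\min\{C_{n-1}^{\mathbf{p}-1},k\}$. These reflect the fact that each $\kappa_i$ belongs to exactly $C_{n-1}^{\mathbf{p}-1}$ of the $\Lambda_I$, which makes the bad third-order terms controllable. Together with $\mathbf{p}<n$, these are precisely the assumptions in Theorem \ref{c2}, so no further restriction is needed. I would also confirm that Zhou's dependence on $\psi$ is only through $|\psi^{1/(k-l)}|_{C^2}$ and the upper and lower bounds of $\psi$; both are supplied by the previous step.

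The curvature bound then follows. Zhou's estimate gives $\max_M\kappa_{\max}\le C$. Since $\kappa\in\mathcal P_{\mathbf{p},k}\subset\mathcal P_{\mathbf{p},1}$, we have $\sum_i\kappa_i>0$, so the negative curvatures are bounded below by $-(n-1)C$, and $|\kappa_i|\le C$.

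The main obstacle is bookkeeping rather than new analysis. One must check that Zhou's hypotheses on $\psi$ hold: these are $C^2$ regularity in both arguments, with positivity on the region where the solution lives and no hidden convexity assumption in $\nu$. If Zhou's Theorem 1.6 demands extra structure in $\nu$ for the range $l>0$, I would instead redo the standard maximum-principle argument. That argument uses the test function
\[
\log\kappa_{\max}-\alpha\log\langle X,\nu\rangle+\beta|X|^2,
\]
together with concavity from Proposition \ref{f24} and the identities \eqref{xi}, \eqref{xij}. The positive lower bound on $\langle X,\nu\rangle$ from the $C^1$ estimate is what absorbs the $\nu$-derivatives of $\psi$.
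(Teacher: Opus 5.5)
Your proposal follows the paper's route: the paper obtains Theorem \ref{c2} by directly invoking Theorems 1.4 and 1.6 of \cite{Zhou-2024} with $\psi=f(\frac{X}{|X|})|X|^b\langle X,\nu\rangle^q$. Your extra bookkeeping is sound, namely positivity and $C^2$ control of $\psi$ on the relevant compact set, and the passage from $\kappa_{\max}\le C$ to $|\kappa_i|\le C$ via $\sigma_1(\Lambda)=C_{n-1}^{\mathbf{p}-1}\sum_i\kappa_i>0$. One caution: treat the two-sided bounds on $\rho$ and the bound on $|\nabla\rho|$ as given inputs, since the constant may depend on them, rather than deriving them from Theorems \ref{c0} and \ref{c1-877}; those results assume $-b-q-k+l>0$, and their bounds degenerate as $\varepsilon\to 0$ when Theorem \ref{c2} is applied to $\bar\rho$ in Section 4.
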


Next, we consider  the curvature estimates for equation \eqref{G-eqk} with $0<k\leq N= C_{n}^{p}$. Denote the equation as follows,
\begin{equation}\label{G-eq2}
F=\sigma_k(\Lambda)=f(\frac{X}{|X|})|X|^b\langle X,\nu\rangle^q=Tu^q.
\end{equation}
where $T(X)=f(\frac{X}{|X|})|X|^b$ and $u=\langle X,\nu\rangle$.

\begin{lemma}\label{convex}
For any $\alpha > 0$, if $(h_{ij}) \in \mathcal{P}_{\mathbf{p}, k}$, then
\begin{equation*}
(\sigma_k(\Lambda))^{ij,mr} h_{ij;s} h_{mr;s} \leq \sigma_k \left( \frac{(\sigma_k(\Lambda))_s}{\sigma_k(\Lambda)} - \frac{(\sigma_1(\Lambda))_s}{\sigma_1(\Lambda)} \right) \left((\alpha + 1) \frac{(\sigma_k(\Lambda))_s}{\sigma_k(\Lambda)}- (\alpha - 1) \frac{(\sigma_1(\Lambda))_s}{\sigma_1(\Lambda)} \right).
\end{equation*}
\end{lemma}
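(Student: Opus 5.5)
The plan is to reduce the lemma to a known concavity inequality for $\sigma_k$ on the Gårding cone $\tilde\Gamma_k\subset\mathbb{R}^N$. We then transport that inequality to the $h_{ij}$-variables through the linear map $A\mapsto W$ of Proposition \ref{wij}.

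The key known fact is the Guan--Ren--Wang inequality (see \cite{Guan-Ren15}, and also \cite{Ren}). For $\lambda\in\Gamma_k$ and any $\alpha>0$, every symmetric variation $\eta$ of the matrix variable satisfies
\begin{equation*}
\sigma_k^{pq,rs}\eta_{pq}\eta_{rs}\le \sigma_k\left(\frac{\sigma_k^{pq}\eta_{pq}}{\sigma_k}-\frac{\sigma_1^{pq}\eta_{pq}}{\sigma_1}\right)\left((\alpha+1)\frac{\sigma_k^{pq}\eta_{pq}}{\sigma_k}-(\alpha-1)\frac{\sigma_1^{pq}\eta_{pq}}{\sigma_1}\right).
\end{equation*}
This follows from the concavity of $(\sigma_k/\sigma_1)^{1/(k-1)}$ together with Cauchy--Schwarz. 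Concretely, one writes
\[
\log\sigma_k=\log\sigma_1+(k-1)\log\Big(\frac{\sigma_k}{\sigma_1}\Big)^{\frac1{k-1}}
\]
and differentiates twice. The concavity of $\log\big((\sigma_k/\sigma_1)^{1/(k-1)}\big)$ gives
\[
\frac{\sigma_k^{pq,rs}\eta\eta}{\sigma_k}-\Big(\frac{\sigma_k^{pq}\eta_{pq}}{\sigma_k}\Big)^2\le -\Big(\frac{\sigma_1^{pq}\eta_{pq}}{\sigma_1}\Big)^2-\frac1{k-1}\Big(\frac{\sigma_k^{pq}\eta_{pq}}{\sigma_k}-\frac{\sigma_1^{pq}\eta_{pq}}{\sigma_1}\Big)^2.
\]
This uses $\sigma_1^{pq,rs}=0$ and the concavity of $(\sigma_k/\sigma_1)^{1/(k-1)}$ from Proposition \ref{sigma}(v). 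After rearranging, the right side equals $(a-b)\big((1-\tfrac1{k-1})a+b\big)$ with $a=\sigma_k^{pq}\eta_{pq}/\sigma_k$ and $b=\sigma_1^{pq}\eta_{pq}/\sigma_1$. One then compares this with $(a-b)\big((\alpha+1)a-(\alpha-1)b\big)$. The difference is $(a-b)^2(\alpha+\tfrac1{k-1})$ up to sign, which is $\ge0$, so the stated form holds for every $\alpha>0$. I would apply this argument to the $N\times N$ matrix $W$ with $\Lambda(W)\in\tilde\Gamma_k$.

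Next comes the transport step. Since $W=W(h)$ is linear in $h_{ij}$, the chain rule gives
\[
(\sigma_k(\Lambda))^{ij,mr}h_{ij;s}h_{mr;s}=\sigma_k^{IJ,RS}(W)\,W_{IJ;s}W_{RS;s}.
\]
It also gives $(\sigma_k(\Lambda))_s=\sigma_k^{IJ}W_{IJ;s}$. For the trace term, $\sigma_1(\Lambda)=\operatorname{tr}W=C_{n-1}^{\mathbf p-1}\sigma_1(h)$, so $(\sigma_1(\Lambda))_s=\sigma_1^{IJ}(W)W_{IJ;s}$. We therefore apply the $W$-level inequality with $\eta_{IJ}=W_{IJ;s}$, which is symmetric, and sum over $s$. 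Each $s$ gives the inequality separately, and the statement is presumably meant with summation over $s$ on both sides.

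The main point needing care is that the derivation uses the concavity of $(\sigma_k/\sigma_1)^{1/(k-1)}$ on $\tilde\Gamma_k\subset\mathbb{R}^N$. This is exactly Proposition \ref{sigma}(v) in dimension $N$, lifted to matrices in the standard way. The case $k=1$ is trivial, since both sides vanish when $a=b$. The cone condition $(h_{ij})\in\mathcal P_{\mathbf p,k}$ guarantees $\Lambda(W)\in\tilde\Gamma_k$, so that $\sigma_k>0$ and $\sigma_1>0$ and all divisions are legitimate.
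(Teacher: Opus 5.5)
Your proposal is correct and follows the paper's route. The paper cites Lemma 3.1 of Huang--Xu for the inequality at the level of the $N\times N$ matrix $W$, then transports it through the linear map $h\mapsto W$ by the chain rule exactly as you do; you additionally derive that $W$-level inequality from the concavity of $(\sigma_k/\sigma_1)^{1/(k-1)}$ on $\tilde\Gamma_k$. One small algebra slip: the rearranged right side is $(a-b)\big((1-\tfrac{1}{k-1})a+(1+\tfrac{1}{k-1})b\big)$, not $(a-b)\big((1-\tfrac{1}{k-1})a+b\big)$. With this correction the gap to the target is exactly $(\alpha+\tfrac{1}{k-1})(a-b)^2\ge 0$, as you claim.
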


\begin{proof}
By  Lemma 3.1 in ~\cite{HX-2013}, for any $\alpha > 0$,
\begin{eqnarray*}
\begin{aligned}
&\sum_{I,J,M,R}^{N}\frac{\partial^2 \sigma_k(\Lambda)}{\partial W_{IJ}\partial W_{MR}}W_{IJ;s}W_{MR;s}\\
\leq& \sigma_k(\Lambda) \left( \frac{(\sigma_k(\Lambda))_s}{\sigma_k(\Lambda)} - \frac{(\sigma_1(\Lambda))_s}{\sigma_1(\Lambda)} \right) \left((\alpha + 1) \frac{(\sigma_k(\Lambda))_s}{\sigma_k(\Lambda)}- (\alpha - 1) \frac{(\sigma_1(\Lambda))_s}{\sigma_1(\Lambda)} \right).
\end{aligned}
\end{eqnarray*}
where $\{W_{IJ}\}$ are the entries of the matrix of the linear derivation $\mathcal{D}_H$ associated with $H=\{h_{ij}\}$.
Therefore,
\begin{align*}
\sum_{i,j,m,r}^{n}\frac{\partial^2 \sigma_k(\Lambda)}{\partial h_{ij}\partial h_{mr}}h_{ij;s}h_{mr;s}
=&\sum_{I,J,M,R}^{N}\frac{\partial^2 \sigma_k(\Lambda)}{\partial W_{IJ}\partial W_{MR}}
\sum_{i,j,m,r}^n\frac{\partial W_{IJ}}{\partial h_{ij}}\frac{\partial W_{MR}}{\partial h_{mr}}h_{ij;s}h_{mr;s}\\
=&\sum_{I,J,M,R}^{N}\frac{\partial^2 \sigma_k(\Lambda)}{\partial W_{IJ}\partial W_{MR}}\sum_{i,j}^n\frac{\partial W_{IJ}}{\partial h_{ij}}h_{ij;s}
\sum_{m,r}^n\frac{\partial W_{MR}}{\partial h_{mr}}h_{mr;s}\\
=&\sum_{I,J,M,R}^{N}\frac{\partial^2 \sigma_k(\Lambda)}{\partial W_{IJ}\partial W_{MR}} W_{IJ;s}W_{MR;s}\\
\leq& ~ \sigma_k \left( \frac{(\sigma_k)_s}{\sigma_k} - \frac{(\sigma_1)_s}{\sigma_1} \right) \left((\alpha + 1) \frac{(\sigma_k)_s}{\sigma_k}- (\alpha - 1) \frac{(\sigma_1)_s}{\sigma_1} \right).
\end{align*}
\end{proof}

\begin{theorem}\label{c22}
Let $1 \leq k\leq n$,   $M\subset \mathbb{R}^{n+1}$ be a closed, star-shaped and  $(\mathbf{p}, k)$-convex hypersurface satisfying the equation \eqref{G-eq2}.
Suppose $f(X)$ is a $C^2$ positive function on $M$ and $q\leq 1$, then
\begin{equation*}\label{eq:3.1}
\sigma_1(H) \leq C.
\end{equation*}
 where the constant $C$ depends on $n, k, q,|\rho|_{C_1},|f|_{C_2},\max_{\mathbb{S}^n}f,\min_{\mathbb{S}^n}f, \min_{\mathbb{S}^n}\rho$.
\end{theorem}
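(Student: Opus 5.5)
We follow the Guan--Ren--Wang / Huang--Xu scheme for $\sigma_k$ curvature equations with $u=\langle X,\nu\rangle$ on the right-hand side, adapted to $\sigma_k(\Lambda)$ through the derivation $\mathcal{D}_H$. By Theorems \ref{c0} and \ref{c1-877}, $u=\langle X,\nu\rangle=\rho^2(\rho^2+|\nabla\rho|^2)^{-1/2}$ is bounded below by a positive constant $c_0$ and above. The test function is
\[
\Phi=\log\sigma_1(H)-N_0\log(u-a),\qquad a=\tfrac12 c_0,
\]
possibly with an extra term $\frac{\beta}{2}|X|^2$. Here $N_0,\beta$ are large constants to be chosen. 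Note that $\sigma_1(\Lambda)=C_{n-1}^{\mathbf{p}-1}\sigma_1(H)$, and $\sigma_1(H)>0$ since $\kappa\in\mathcal{P}_{\mathbf{p},k}$. So controlling $\sigma_1(H)$ from above bounds $\kappa$ from above, and admissibility then gives lower bounds.

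At the maximum point $X_0$ we take an orthonormal frame with $(h_{ij})$ diagonal. Write $F^{ii}=\partial\sigma_k(\Lambda)/\partial h_{ii}=\sum_{I\ni i}\sigma_{k-1}(\Lambda|I)>0$; this is positive by Proposition \ref{f24} and Proposition \ref{wij}. The critical point condition gives $\frac{\sum_j h_{jji}}{\sigma_1(H)}=N_0\frac{h_{ii}\langle X,e_i\rangle}{u-a}$. We then compute $0\ge F^{ii}\Phi_{ii}$, commuting derivatives by \eqref{2rd} and using the equation twice differentiated:
\[
F^{ii}\sum_j h_{jjii}=-F^{ij,mr}h_{ij;s}h_{mr;s}+\Delta(Tu^q)+\Big(\sum_i F^{ii}h_{ii}^2\Big)\sigma_1(H)-k\sigma_k(\Lambda)\sum_s h_{ss}^2 .
\]
The $u$-term is handled with $u_{ii}=h_{iik}\langle X,e_k\rangle+h_{ii}-h_{ii}^2u$ from the identities listed after \eqref{xij}. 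This gives a good term $\frac{N_0 u}{u-a}\sum F^{ii}h_{ii}^2$, against which the bad term $\sum F^{ii}h_{ii}^2$ arising from $\sigma_1$ must be absorbed, which works since $\frac{N_0u}{u-a}-1$ is large. The third-derivative terms coming from $\langle X,e_k\rangle$ cancel with the differentiated equation via the critical point condition.

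The main obstacle is the concavity term $-F^{ij,mr}h_{ij;s}h_{mr;s}$ combined with the right-hand side $\Delta(Tu^q)$. The latter contains $qTu^{q-1}\sum_k h_{ssk}\langle X,e_k\rangle$, which equals $qTu^{q-1}\langle\nabla\sigma_1(H),X^T\rangle$, plus the term $q(q-1)Tu^{q-2}|\nabla u|^2\ge -C\sigma_1^2$ when $q\le1$. The quadratic terms $-qTu^{q-1}\sigma_1 h_{ii}^2\cdot$(bounded) are of lower order relative to $N_0\sum F^{ii}h_{ii}^2$. To handle the concavity term we apply Lemma \ref{convex} with a suitably chosen $\alpha$:
\[
-F^{ij,mr}h_{ij;s}h_{mr;s}\ge-\sigma_k\Big(\tfrac{(\sigma_k)_s}{\sigma_k}-\tfrac{(\sigma_1)_s}{\sigma_1}\Big)\Big((\alpha+1)\tfrac{(\sigma_k)_s}{\sigma_k}-(\alpha-1)\tfrac{(\sigma_1)_s}{\sigma_1}\Big).
\]
Since $(\sigma_k)_s/\sigma_k=(\log T)_s+q\,u_s/u$ is bounded by $C(1+\kappa_{\max})$ and $(\sigma_1)_s/\sigma_1$ is expressed through the critical point condition, the right-hand side becomes a quadratic form in $(\sigma_1)_s/\sigma_1$. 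Choosing $\alpha$ large makes the term $(\alpha-1)|\nabla\sigma_1|^2/\sigma_1^2$ positive, and the leftover $|\nabla\log\sigma_1|^2$ from $\Phi_{ii}$ is absorbed after Cauchy--Schwarz. It is here that $q\le1$ is used, to sign the $q(q-1)$ term and the cross terms; I expect this bookkeeping to be the delicate step.

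Finally we need $\sum_i F^{ii}h_{ii}^2\ge c\,\sigma_1(H)\sum_iF^{ii}$ or $\ge c\,\kappa_{\max}^2$ for the absorption. Since $F^{11}\ge\sigma_{k-1}(\Lambda|I)$ for $I\ni1$ with $\kappa_1$ the largest principal curvature, and $\kappa_1\ge\sigma_1(H)/n$, Proposition \ref{sigma}(viii) applied to $\Lambda$ gives $\Lambda_I\sigma_{k-1}(\Lambda|I)\ge c\,\sigma_k$ for the largest $\Lambda_I$, so $F^{11}\kappa_1^2\ge c\,\kappa_1\sigma_k$. We take $N_0$ large and conclude that $N_0\,c\,\kappa_1\sigma_k\le C(N_0)\sum F^{ii}+C$. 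Using $\sum F^{ii}\le C\sigma_{k-1}(\Lambda)\le C\kappa_1^{k-1}$ is too weak, so instead we compare to $\sum_iF^{ii}h_{ii}^2$ directly to get $\kappa_1\le C$, hence $\sigma_1(H)\le C$.
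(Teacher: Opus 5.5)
There is a genuine gap, and it lies in the test function. With $\Phi=\log\sigma_1-N_0\log(u-a)$, the critical-point relation is $(\sigma_1)_i/\sigma_1=N_0u_i/(u-a)$. The two squared-gradient terms of $F^{ii}\Phi_{ii}$ therefore combine to
\[
-\frac{F^{ii}(\sigma_1)_i^2}{\sigma_1^2}+N_0\frac{F^{ii}u_i^2}{(u-a)^2}=-N_0(N_0-1)\sum_i\frac{F^{ii}h_{ii}^2\langle X,e_i\rangle^2}{(u-a)^2}.
\]
This is quadratic in $N_0$, while your good term $\bigl(\frac{N_0u}{u-a}-1\bigr)\sum_iF^{ii}h_{ii}^2$ is only linear in $N_0$. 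So taking $N_0$ large makes things worse wherever $X^T\neq0$, and adding $\frac{\beta}{2}|X|^2$ does not change this.

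Your proposed remedy via Lemma~\ref{convex} fails for two reasons:
\begin{itemize}
\item Sign. With $A_s=(\sigma_k)_s/\sigma_k$ and $B_s=(\sigma_1)_s/\sigma_1$, the lower bound expands to $-\sigma_k\bigl[(\alpha+1)A_s^2-2\alpha A_sB_s+(\alpha-1)B_s^2\bigr]$. The $B_s^2$ coefficient is $-(\alpha-1)\sigma_k$, which is negative for large $\alpha$.
\item Weight. Even for $\alpha<1$, that term carries the weight $\sigma_k/\sigma_1$ rather than $F^{ss}$, so it cannot in general absorb $\sum_sF^{ss}B_s^2$.
\end{itemize}
Your commuted identity also has the last two signs reversed. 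By \eqref{2rd}, $F^{ij}(\sigma_1)_{ij}=\sum_sF^{ij}h_{ij;ss}-\sigma_1F^{ij}h_{im}h_{mj}+k\sigma_k|H|^2$, and $k\sigma_k|H|^2$ is precisely the coercive term the argument needs. Your closing paragraph leans on the unjustified bound $\sum F^{ii}h_{ii}^2\ge c\,\sigma_1\sum F^{ii}$ and never actually closes.

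The missing idea is to take $N_0=1$ and $a=0$, i.e.\ the test function $\log(\sigma_1/u)$, as the paper does. Then $(\sigma_1)_s/\sigma_1=u_s/u$ at the maximum, and the following cancel exactly:
\begin{itemize}
\item the squared-gradient terms, which vanish identically;
\item the terms $\sigma_1F^{ij}h_{im}h_{mj}$ from the commutation formula and from $-\frac{\sigma_1}{u^2}F^{ij}u_{ij}$;
\item the third-order terms $qTu^{q-1}\langle\nabla\sigma_1,X^T\rangle$ and their counterpart from $F^{ij}h_{ijk}\langle X,e_k\rangle$.
\end{itemize}
Lemma~\ref{convex}, with $A_s=qu_s/u+T_s/T$ and $B_s=u_s/u$, becomes a quadratic form in $u_s/u$ and $T_s/T$ alone. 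Adding $q(q-1)Tu^{q-2}|\nabla u|^2$ from $\Delta(Tu^q)$, the coefficient of $Tu^{q-3}u_s^2$ (after multiplying through by $\sigma_1/u$) is $(1-q)\bigl(1-\alpha(1-q)\bigr)$. This is where $q\le1$ enters. It requires $0<\alpha\le\frac{1}{1-q}$, i.e.\ small $\alpha$. The paper's text says $\alpha>\frac{1}{1-q}$, but its own coefficient shows that the opposite inequality is what is needed. What remains is $0\ge\frac{k-q}{2}Tu^{q-1}|H|^2-C\sigma_1-C$, and $|H|^2\ge\sigma_1^2/n$ gives $\sigma_1\le C$.
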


\begin{proof}
Consider the test function $\ln \dfrac{\sigma_1}{u}$.
Then, at its maximum  point $X_0$,
\begin{eqnarray}\label{c331}
\begin{aligned}
0 = \left( \ln \frac{\sigma_1}{u} \right)_i = \frac{(\sigma_1)_i}{\sigma_1} - \frac{u_i}{u},
\end{aligned}
\end{eqnarray}
and
\begin{eqnarray*}\label{c332}
\begin{aligned}
0 \geq F^{ij} \left( \ln \frac{\sigma_1}{u} \right)_{ij} 
=~& \frac{1}{\sigma_1} F^{ij} (\sigma_1)_{ij} - \frac{1}{u} F^{ij} u_{ij}.
\end{aligned}
\end{eqnarray*}
Moreover, we obtain
\begin{equation}\label{c333}
0 \geq \frac{1}{u} F^{ij} (\sigma_1)_{ij} - \frac{\sigma_1}{u^2} F^{ij} u_{ij}.
\end{equation}
From the identity $ u_{ij} = h_{ijk} \langle X, e_k \rangle + h_{ij} - h_{ik} h_{kj}u$,  it follows that
\begin{eqnarray}\label{c334}
\begin{aligned}
- \frac{\sigma_1}{u^2} F^{ij} u_{ij}
= - \frac{\sigma_1}{u^2}(T_su^q+qTu^{q-1}u_s) \langle X, e_k \rangle - \frac{\sigma_1}{u^2} kF + \frac{\sigma_1}{u} F^{ij} h_{ik} h_{kj}.
\end{aligned}
\end{eqnarray}
 Equation \eqref{2rd} yields the identity
$$ h_{ssij} = h_{ijss} -h_{ss} h_{im} h_{mj} + h_{sm} h_{ij} h_{ms}.$$
Consequently,
\begin{align}
\frac{1}{u} F^{ij} (\sigma_1)_{ij}
=&\frac{1}{u} F^{ij} (\sum_{s=1}^nh_{ss})_{ij} \notag \\
=& \frac{1}{u}\sum_{s=1}^n F^{ij} h_{ij;ss} + \frac{kF}{u} |H|^2 - \frac{1}{u} F^{ij}h_{im}h_{mj}\sigma_1 \notag \\
=& \frac{1}{u} \sum_{s=1}^n(Tu^q)_{ss} - \frac{1}{u}\sum_{s=1}^n F^{ij;mr} h_{ij;s} h_{mr;s} + \frac{kF}{u} |H|^2 - \frac{\sigma_1 }{u} F^{ij}h_{im}h_{mj}\notag \\
=& \frac{1}{u} \sum_{s=1}^n\left[T_{ss} u^q + 2q u^{q-1}T_s u_s + q(q-1)Tu^{q-2}u_s^2+qTu^{q-1} u_{ss}\right]\notag \\
&- \frac{1}{u}\sum_{s=1}^n F^{ij;mr} h_{ij;s} h_{mr;s} +  \frac{kF}{u}  |H|^2  - \frac{\sigma_1}{u} F^{ij}h_{im}h_{mj} \notag \\
=& \frac{1}{u} \sum_{s=1}^n(T_{ss} u^q + 2q u^{q-1}T_s u_s)+ q(q-1)Tu^{q-3}\sum_{s=1}^nu_s^2 +qTu^{q-2} [ (\sigma_1)_l(X, e_l) +\sigma_1]\notag \\
& - \frac{1}{u}\sum_{s=1}^n F^{ij;mr} h_{ij;s} h_{mr;s} +  \frac{1}{u}(k-q)F |H|^2 - \frac{\sigma_1 }{u} F^{ij}h_{im}h_{mj}\label{c335}.
\end{align}
where $|H|^2=\sum_{s,m}h_{sm}h_{ms}$.
On the other hand, it follows from \eqref{c331} that
$$
\frac{(\sigma_k)_s}{\sigma_k} = q \frac{u_s}{u} + \frac{T_s}{T},\quad
\frac{(\sigma_1)_s}{\sigma_1} = \frac{u_s}{u}.$$
Together with Lemma \ref{convex}, we derive
\begin{equation}\label{c333555}
F^{ij,mr} h_{ij;s} h_{mr;s} \leq {\sigma_k}\Big[(\alpha+1)\frac{T_s^2}{T^2}+(2\alpha q+2q-2\alpha)\frac{T_s}{T}\frac{u_s}{u}+\Big((\alpha+1)q^2-2\alpha q+(\alpha-1)\Big)\frac{u_s^2}{u^2}\Big].
\end{equation}
The identities \eqref{xi} and \eqref{xij} give the first and second derivatives of  $|X|$,
\begin{align}
|X|_i &= |X|^{-1} \langle X, e_i \rangle, \\
|X|_{ii} &= -|X|^{-3} \langle X, e_i \rangle^2 + |X|^{-1}(1 - h_{ii} u).
\end{align}
Using these,  we calculate the derivatives of $T$,
\begin{equation}\label{c3335551}
T_s = (f |X|^b)_s = f_s |X|^b + b f |X|^{b-2} \langle X, e_s \rangle \geq -C,
\end{equation}
and
\begin{align}\label{c3335552}
T_{ss} =&\; f_{ss} |X|^b
        + 2b f_s |X|^{b-2} \langle X, e_s \rangle
        + b(b-1) f |X|^{b-4} \langle X, e_s \rangle^2 \nonumber \\
       &\; - b f |X|^{b-4} \langle X, e_s \rangle^2
        + b f |X|^{b-2}
        - b f |X|^{b-2} u \sigma_1 \nonumber \\
      \geq&\; -C - C \sigma_1.
\end{align}
where $C$ is a constant depending on $\min_{\mathbb{S}^n}\rho, |\rho|_{C_1},|f|_{C_2},\max_{\mathbb{S}^n}f,\min_{\mathbb{S}^n}f$.

Plugging \eqref{c334} - \eqref{c3335552} into \eqref{c333}, we obtain
\begin{equation*}
\begin{aligned}
0\geq&- \sigma_1T_su^{q-2} \langle X, e_k \rangle - \frac{\sigma_1}{u^2} kF + \frac{1}{u} T_{ss} u^q + q(q-1)Tu^{q-3}u_s^2+q\sigma_1Tu^{q-2}+  \frac{k-q}{u}F |H|^2 \\
&-(\alpha+1)\frac{T_s^2}{T}u^{q-1}-(2\alpha q-2\alpha)T_su^{q-2}u_s-\Big((\alpha+1)q^2-2\alpha q+(\alpha-1)\Big) Tu^{q-3}u_s^2\\
\geq&(1-q)\Big(\alpha(q-1)+1\Big)Tu^{q-3}u_s^2-(2\alpha q-2\alpha)T_su^{q-2}u_s+(k-q)Tu^{q-1}|H|^2 -C\sigma_1-C\\
\geq&(1-q)\Big(\alpha(q-1)+1\Big)Tu^{q-3}u_s^2+\frac{k-q}{2}Tu^{q-1}|H|^2 -C\sigma_1-C.
\end{aligned}
\end{equation*}

One has the $C^2$ estimate if
$$(1-q)\Big(\alpha(q-1)+1\Big) \geq 0,$$
which is satisfied by taking $q \leq 1$ and $\alpha > \frac{1}{1 - q} > 0$.
\end{proof}

\subsection{A Constant Rank Theorem}

To preserve the convexity of solutions when applying the continuity method,  we can utilize a specific version of
 the following Constant Rank Theorem.
\begin{theorem}\label{crt}
Let $M \subset \mathbb{R}^{n+1}$ be a $(\mathbf{p},k)$-convex hypersurface satisfying equation \eqref{G-eq}. Assume that the second fundamental form $\{h_{ij}\}$ is positive semi-definite, and that $f$ is a positive smooth function such that
 $$f^{\frac{1}{k-l}}(\frac{X}{|X|}) |X|^{\frac{b}{k-l}}~ \mbox{is locally concave in}~\mathbb{R}^{n+1}.$$
Then $\{h_{ij}\}$ is positive definite on $M$.
\end{theorem}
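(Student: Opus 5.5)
We follow the standard Caffarelli--Guan--Ma constant rank strategy, written for the operator $G(h):=-\left[\frac{\sigma_k}{\sigma_l}(\Lambda(W))\right]^{-\frac{1}{k-l}}$, where $W$ is the matrix of $\mathcal{D}_{\{h_{ij}\}}$. In this form \eqref{G-eq} reads
\begin{equation*}
G(h)=-\phi(X)^{-1}u^{-\frac{q}{k-l}},\qquad \phi:=f^{\frac{1}{k-l}}\Big(\frac{X}{|X|}\Big)|X|^{\frac{b}{k-l}},\qquad u=\langle X,\nu\rangle .
\end{equation*}

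Argue by contradiction. Let $\ell$ be the minimal rank of $\{h_{ij}\}$ on $M$ and suppose $\ell<n$. Take a point $X_0$ where the rank equals $\ell$. On a neighbourhood $\mathcal{O}$ of $X_0$ the eigenvalues split into $\ell$ ``good'' ones, bounded below by $c_0>0$, and $n-\ell$ ``bad'' ones, which are small. Since $(\mathbf{p},k)$-convexity holds and $\mathbf{p}>1$, at least $\mathbf{p}$ eigenvalues cannot all vanish, so $\ell\ge 1$; positivity of the curvatures also gives $u>0$.

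Define
\begin{equation*}
\varphi=\sigma_{\ell+1}(h)+\frac{\sigma_{\ell+2}(h)}{\sigma_{\ell+1}(h)}
\end{equation*}
(when $\ell=n-1$, simply $\varphi=\sigma_n(h)$). Then $\varphi\ge 0$, $\varphi(X_0)=0$, and $\varphi\sim\sum_{i\in B}h_{ii}$ on $\mathcal{O}$, where $B$ is the set of bad indices. The goal is the differential inequality
\begin{equation*}
G^{ij}\varphi_{ij}\le C\big(\varphi+|\nabla\varphi|\big)\quad\text{in }\mathcal{O},
\end{equation*}
with $G^{ij}=\partial G/\partial h_{ij}$ elliptic by Proposition \ref{f24}. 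The strong maximum principle then forces $\varphi\equiv 0$ near $X_0$. Hence the set where the rank equals $\ell$ is open; it is closed by lower semicontinuity of the rank, so the rank is $\ell$ on all of the connected $M$. This is impossible on a closed hypersurface, which has a point where every $\kappa_i>0$ (the farthest point from the origin, as in the proof of Theorem \ref{c0}). So $\{h_{ij}\}$ is positive definite.

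To compute $G^{ij}\varphi_{ij}$ at a point where $h$ is diagonal, follow Bian--Guan:
\begin{equation*}
G^{ij}\varphi_{ij}=O(\varphi+|\nabla\varphi|)+\sum_{i\in B}\Big[G^{\alpha\beta}h_{ii\alpha\beta}-2\sum_{j\in G}\frac{G^{\alpha\alpha}h_{ij\alpha}^2}{h_{jj}}\Big]-\frac{1}{\sigma_1(B)}\sum_\alpha G^{\alpha\alpha}\Big(\dots\Big)^2 .
\end{equation*}
Here $G$ in the inner sum denotes the set of good indices, and the last term has a sign.

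Commute derivatives using \eqref{2rd}: $h_{ii\alpha\beta}=h_{\alpha\beta ii}+(\text{terms}\sim h_{ii})$, which are $O(\varphi)$. Differentiating the equation twice in the $e_i$ direction, $i\in B$, gives
\begin{equation*}
G^{\alpha\beta}h_{\alpha\beta ii}=-G^{\alpha\beta,\gamma\delta}h_{\alpha\beta i}h_{\gamma\delta i}+\Big(-\phi^{-1}u^{-\frac{q}{k-l}}\Big)_{ii}.
\end{equation*}
The key input is Proposition \ref{L1}, applied to the good block with $\xi_{\alpha\beta}=h_{\alpha\beta i}$. Since $h_{ii}\to 0$ for $i\in B$, and using the structure of $\mathcal{D}_A$, the combination
\begin{equation*}
-G^{\alpha\beta,\gamma\delta}h_{\alpha\beta i}h_{\gamma\delta i}-2\sum_{j\in G}G^{\alpha\alpha}\frac{h_{ij\alpha}^2}{h_{jj}}
\end{equation*}
is bounded above by $O(\varphi+|\nabla\varphi|)$. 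Inverse convexity is exactly what controls the mixed good--good second derivatives together with the $h^{js}$ term.

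The right-hand side is handled as follows. Using $u_{ii}=h_{iik}\langle X,e_k\rangle+h_{ii}-h_{ii}^2u$, where $h_{iik}=O(\varphi+|\nabla\varphi|)$ for $i\in B$, and $(|X|^2)_{ii}=2-2h_{ii}u$, we get
\begin{equation*}
\Big(-\phi^{-1}u^{-\frac{q}{k-l}}\Big)_{ii}\le -u^{-\frac{q}{k-l}}(\phi^{-1})_{ii}+O(\varphi+|\nabla\varphi|).
\end{equation*}
The term coming from $u^{-q/(k-l)}$ is fine when $q\ge 0$: with $u_i=h_{ii}\langle X,e_i\rangle\to 0$, its second derivative gives a contribution of the right sign plus $O(\varphi)$. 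Local concavity of $\phi$ in $\mathbb{R}^{n+1}$ gives, in the direction $e_i$, $\phi_{ii}\le$ (a normal-derivative term multiplied by $h_{ii}$) $=O(\varphi)$. Hence $(\phi^{-1})_{ii}\ge O(\varphi)$ once the first-derivative square terms are absorbed, which they are because $\phi^{-1}$ appears with the favourable sign.

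\textbf{Main obstacle.} The hardest step is transferring inverse convexity from the $N\times N$ matrix $W$ to $h$ on the degenerate block. The bad directions of $h$ do not correspond to a clean block of $W$, since the $\Lambda_I$ with $I$ meeting $B$ stay positive. I would first try to apply Proposition \ref{L1} to the perturbed matrix $h+\varepsilon I$ and then let $\varepsilon\to 0$. The terms $G^{ii}\xi_{ij}^2/h_{jj}$ with $i,j\in B$ are controlled by the cancellation already present in $\varphi$.
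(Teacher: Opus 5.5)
Your architecture (Bian--Guan's test function $\sigma_{\ell+1}+\sigma_{\ell+2}/\sigma_{\ell+1}$ for the operator $G=-\tilde F^{-1}$, with Proposition~\ref{L1} serving as their structural condition) is a legitimate alternative to the paper's route. The paper uses the plain test function $\sigma_{m+1}(A)$ for the concave operator $\tilde F$ and takes $I_i+II_i+III_i\gtrsim 0$ from Lemma 4.1 of the reference it cites. However, the step that carries your argument is stated incorrectly, and the repair you sketch would fail.

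First, the combination $-G^{\alpha\beta,\gamma\delta}h_{\alpha\beta i}h_{\gamma\delta i}-2\sum_{j\in G}G^{\alpha\alpha}h_{ij\alpha}^2/h_{jj}$ is not $O(\varphi+|\nabla\varphi|)$ by inverse convexity alone. Besides the good--good part, it contains two further pieces:
\begin{itemize}
\item the mixed terms $-2\sum_{\alpha\in G,\,b\in B}G^{\alpha\alpha,bb}h_{\alpha\alpha i}h_{bbi}$;
\item the entire $B\times B$ block of the Hessian of $G$, which enters with the bad sign because $G$ is concave.
\end{itemize}
These are of size $C\sum_{a,b\in B}\big(|\nabla h_{ab}|+|\nabla h_{ab}|^2\big)$, with bounded but not small coefficients.

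Second, feeding the full $\xi_{\alpha\beta}=h_{\alpha\beta i}$ into Proposition~\ref{L1} at $h+\varepsilon I$ only bounds the Hessian term by $2\sum_{\alpha,\gamma}G^{\alpha\alpha}h_{\alpha\gamma i}^2/(h_{\gamma\gamma}+\varepsilon)$. For $\gamma\in B$ this coefficient tends to $1/h_{\gamma\gamma}$. The only thing in $\varphi$ that can absorb bad--bad third derivatives is the quotient part, whose coefficient is of order $1/\sigma_1(B)\le 1/h_{\gamma\gamma}$. It can be much smaller, for instance when two bad eigenvalues have different orders of magnitude. So these terms are not ``controlled by the cancellation already present in $\varphi$''.

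Here is how to use Proposition~\ref{L1} at a degenerate point.
\begin{itemize}
\item Apply it only to the $G\times G$ block of $\xi$, at $h+\varepsilon I$, and let $\varepsilon\to0$. This limit is harmless because $\Lambda\in\tilde\Gamma_k$ keeps $G$ smooth at the degenerate matrix. The $h^{-1}$-term then involves only good indices, and the good--good part is dominated by $-2\sum_{\alpha,j\in G}G^{\alpha\alpha}h_{ij\alpha}^2/h_{jj}$.
\item For $a\in B$, $j\in G$, the Hessian term and the good term with $\alpha=a$ combine exactly to $-2h_{aji}^2\,\frac{G^{jj}h_{jj}-G^{aa}h_{aa}}{h_{jj}(h_{jj}-h_{aa})}\le 0$.
\item Everything left involves $h_{abi}$ with $a,b\in B$. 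It is absorbed, by Cauchy--Schwarz and uniform ellipticity, into the negative term of order $\frac{1}{\sigma_1(B)}\sum_\alpha\sum_{a,b\in B}G^{\alpha\alpha}h_{ab\alpha}^2$ produced by $\sigma_{\ell+2}/\sigma_{\ell+1}$. This is exactly why Bian--Guan add the quotient, and your write-up never actually uses it.
\end{itemize}
With this repair your route is sound. It is arguably more robust than the paper's, where the bad--bad block is controlled only by the bounded term $(\sum_{k\in G}h_{kk}^{-1})\sum_{a,b\in B}\tilde F^{\alpha\alpha}h_{ab\alpha}^2$ and the needed comparison is delegated to the cited lemma.

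Two smaller corrections:
\begin{itemize}
\item $h_{iik}$ is not individually $O(\varphi+|\nabla\varphi|)$ for $i\in B$; only $\sum_{i\in B}h_{iik}$ is. That suffices, because the coefficient of $u_{ii}$ does not depend on $i$.
\item For the same reason, $\sum_{i\in B}(u^{-q/(k-l)})_{ii}\sim0$ for every real $q$. So you do not need $q\ge0$, which the statement does not assume. Also, $u>0$ comes from star-shapedness, not from the curvatures.
\end{itemize}
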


\begin{proof}

Suppose $A=\{h_{ij}\}$ attains its minimum rank $m$ at some point $X_0\in M$, we have $\sigma_m(A)(X_0)>0$ and $\sigma_{m+1}(A)(X_0)=0$. Then there exists a small open neighborhood $M_0$ of $X_0$ and a small positive constant $C_0$ such that $\sigma_m(A)(X)\geq C_0>0$. We may assume that $k\leq m\leq n-1$, otherwise we are done.

For convenience, we denote $\kappa=(\kappa_1,\cdots,\kappa_n)$ and $\kappa_i$ are eigenvalues of $A$.
For each $X\in M_0$, choose a local orthonormal frame $\{e_A\}$ in a neighborhood of $X$  such that $\{e_1, e_2, \cdots, e_n\}$ is tangent to
$M_0$, $e_{n+1}$ is the unit normal, and $\{h_{ij}\}$ is diagonal at
$X$ with $h_{11}\leq h_{22}\leq\cdots\leq h_{nn}$.
Consider the test function
$$\phi(X)=\sigma_{m+1}(A).$$
Following the notations in \cite{Gu02}, we say that $h(y)\lesssim k(y)$ provided there exists positive constants $C_1$ and $C_2$ such that
$$(h-k)(y)\leq\left(C_1|\nabla\phi|+C_2\phi\right)(y),$$
and we write $h(y)\sim k(y)$ if $h(y)\lesssim k(y)$ and $k(y)\lesssim h(y)$.

Let $B=\{1,\cdots,n-m\}$ and $G=\{n-m+1,\cdots,n\}$ be the sets of indices for eigenvalues $\kappa_i$. Let $\kappa_B=(\kappa_1, \cdots, \kappa_{n-m})$ be the ``bad" eigenvalues of $A$ and $\kappa_G=(\kappa_{n-m+1}, \cdots, \kappa_n)$ be the ``good" eigenvalues of $A$, for convenience, we also write $B=\kappa_B, G=\kappa_G$ if there is no confusion. Hence we get
\begin{equation}\label{equ1}
  0\sim\phi(X)\sim\sigma_{m+1}(A)\sim\sigma_m(G)\sum_{i\in B}h_{ii}\sim\sum_{i\in B}h_{ii}.
\end{equation}
and
\begin{equation}\label{equ11}
  0\sim\phi_{\alpha}\sim\sigma_m(G)\sum_{i\in B}h_{ii\alpha}\sim\sum_{i\in B}h_{ii\alpha}.
\end{equation}

For the convenience of calculation, equation \eqref{G-eq} can be expressed as
\begin{equation}\label{eq1}
 \left(\frac{\sigma_k}{\sigma_l}(\Lambda(\kappa))\right)^{\frac{1}{k-l}}=\widetilde{f}(X)\langle X, \nu\rangle^{\frac{q}{k-l}}:=\widetilde{\psi},
\end{equation}
where $\widetilde{f}=f^{\frac{1}{k-l}}(\frac{X}{|X|}) |X|^{\frac{b}{k-l}}$.
Denote $\widetilde{F}=\left(\frac{\sigma_k}{\sigma_l}(\Lambda(\kappa))\right)^{\frac{1}{k-l}}$, then
 $$\widetilde{F}^{ij}=\frac{\partial \widetilde{F}}{\partial h_{ij}},\quad
\widetilde{F}^{ij,rs}=\frac{\partial^2 \widetilde{F}}{\partial h_{ij}\partial h_{rs}}.$$
 Differentiating equation \eqref{eq1} twice yields
\begin{eqnarray}\label{eqnar1-1}
\sum_{\alpha,\beta=1}^{n}\widetilde{F}^{\alpha\beta}h_{\alpha\beta i}=\widetilde{\psi}_i,
\quad \quad \sum_{\alpha,\beta,r,s=1}^{n}\widetilde{F}^{\alpha\beta}h_{\alpha\beta ii}+\sum_{\alpha,\beta=1}^{n}\widetilde{F}^{\alpha\beta,rs}h_{\alpha\beta i}h_{rsi}=\widetilde{\psi}_{ii}.
\end{eqnarray}
Proceeding as in \cite[Theorem 4.2, (4.36)]{Gu02}, it follows that
\begin{eqnarray}\label{eqnar1}
\sum_{\alpha=1}^n \widetilde{F}^{\alpha\alpha}\phi_{\alpha\alpha} &\sim&\sigma_m(G)\sum_{\alpha=1}^n\sum_{i\in B}\widetilde{F}^{\alpha\alpha}h_{ii\alpha\alpha} -\sigma_{m-1}(G)\sum_{\alpha=1}^n\sum_{i, j\in B}\widetilde{F}^{\alpha\alpha}h_{ij\alpha}^2\\
 \nonumber &&-2\sum_{\alpha=1}^n\sum_{i\in B, j\in G}\sigma_{m-1}(G|j)\widetilde{F}^{\alpha\alpha}h_{ij\alpha}^2.
\end{eqnarray}
Moreover, in view of the Gauss equation and \eqref{eqnar1-1}, for each $i\in B$,
\begin{equation}\label{eqnar2}
\sum_{\alpha=1}^n\widetilde{F}^{\alpha\alpha}h_{ii\alpha\alpha} = \sum_{\alpha=1}^n\widetilde{F}^{\alpha\alpha}(h_{\alpha\alpha ii}+h_{ii}h_{\alpha\alpha}^2-h_{ii}^2h_{\alpha\alpha})
\sim \widetilde{\psi}_{ii}-\sum_{\alpha,\beta,r,s=1}^{n}\widetilde{F}^{\alpha\beta,rs}h_{\alpha\beta i}h_{rsi}.
\end{equation}
Following the proof of (4.26)-(4.27) in \cite{Gu02},  we deduce from \eqref{eqnar1} and \eqref{eqnar2} that
\begin{equation*}\label{eqnar3}
\frac{1}{\sigma_m(G)} \sum_{\alpha=1}^n \widetilde{F}^{\alpha\alpha} \phi_{\alpha\alpha} \sim \sum_{i \in B} \tilde{\psi}_{ii} - \sum_{i \in B} (I_i + II_i + III_i),
\end{equation*}
where
\begin{align*}
I_i &= \sum_{\alpha,\beta \in G} \tilde{F}^{\alpha\alpha} h_{\alpha\alpha i} h_{\beta\beta i} + 2 \sum_{\substack{\alpha,\beta \in G \\ \alpha < \beta}} \frac{\tilde{F}^{\alpha\alpha}- \tilde{F}^{\beta\beta}}{h_{\alpha\alpha} - h_{\beta\beta}} h_{\alpha\beta i}^2 + 2 \sum_{\alpha \in G} \frac{\tilde{F}^{\beta\beta}}{h_{\alpha\alpha}} h_{\alpha\beta i}^2, \\
II_i &= \sum_{\substack{\alpha \in G \\ \beta \in B}} \left[ 2 \tilde{F}^{\alpha\alpha,\beta\beta} h_{\alpha\alpha i} h_{\beta\beta i} + 2 \frac{ \tilde{F}^{\alpha\alpha} - \tilde{F}^{\beta\beta}}{h_{\alpha\alpha} - h_{\beta\beta}} h_{\alpha\beta i}^2 + 2 \frac{\tilde{F}^{\beta\beta}}{h_{\alpha\alpha}} h_{\alpha\beta i}^2 + \big( \sum_{k=n-l+1}^n \frac{1}{h_{kk}} \big) \tilde{F}^{\alpha\alpha} h_{\alpha\beta i}^2 \right], \\
III_i &= \sum_{\alpha,\beta \in B} \tilde{F}^{\alpha\alpha,\beta\beta} h_{\alpha\alpha i} h_{\beta\beta i} + 2 \sum_{\substack{\alpha,\beta \in B \\ \alpha < \beta}} \frac{\tilde{F}^{\alpha\alpha} - \tilde{F}^{\beta\beta}}{h_{\alpha\alpha} -h_{\beta\beta}} h_{\alpha\beta i}^2 + \sum_{\alpha,\beta \in B}  \sum_{k=n-l+1}^n \frac{\tilde{F}^{\alpha\alpha}}{h_{kk}}  h_{\alpha\beta i}^2.
\end{align*}

By proposition \ref{L1}, the function $-\left[\frac{\sigma_{k}}{\sigma_{l}}(\Lambda(W))\right]^{-\frac{1}{k-l}}$ is ``inverse convex" with respect to $W$.
It then follows from Lemma 4.1 of \cite{Gu02} that
\begin{equation*}
\begin{aligned}
I_i + II_i + III_i\gtrsim 0, \quad \forall~~~ i \in B.
\end{aligned}
\end{equation*}
Hence, we get
\begin{eqnarray*}
 \frac{1}{\sigma_m(G)} \sum_{\alpha=1}^n \widetilde{F}^{\alpha\alpha} \phi_{\alpha\alpha}  &\lesssim&  \sum_{i \in B}\widetilde{\psi}_{ii}
\end{eqnarray*}
for any $X\in \mathcal{O}$.
Recalling that
$\widetilde{\psi}=\widetilde{f}(X)\langle X, \nu\rangle^{\frac{q}{k-l}}$,
one deduces from \eqref{equ1} and \eqref{equ11} that, for each \(i\in B\),
\begin{eqnarray*}
  \widetilde{\psi}_{ii} &=& \sum_{A, C=1}^{n+1} \widetilde{f}_{X_A X_C} e_i^A e_i^C\langle X, \nu\rangle^{\frac{q}{k-l}}+\sum_{A=1}^{n+1} \widetilde{f}_{X_A} X_{ii}^A \langle X, \nu\rangle^{\frac{q}{k-l}}\\
  &&+\frac{2q}{k-l}\sum_{A=1}^{n+1} \widetilde{f}_{X_A} e_i^A \langle X, \nu\rangle^{\frac{q}{k-l}-1} h_{ii} \langle X, e_i\rangle+\frac{q}{k-l} (\frac{q}{k-l}-1)\widetilde{f} \langle X, \nu\rangle^{\frac{q}{k-l}-2}h_{ii}^2 \langle X, e_i\rangle^2\\
  &&+ \frac{q}{k-l} \widetilde{f} \langle X, \nu\rangle^{\frac{q}{k-l}-1} \left(\sum_{i=1}^nh_{iij} \langle X, e_j\rangle+h_{ii}-h_{ii}^2 \langle X, e_{n+1}\rangle\right)\\
  &\sim& \sum_{A,C=1}^{n+1} \widetilde{f}_{X_A X_C} e_i^A e_i^C\langle X, e_{n+1}\rangle^{\frac{q}{k-l}}.
\end{eqnarray*}
Note that $\langle X, e_{n+1}\rangle>0$ since $M$ is star-shaped with respect to origin. It implies that
$$\widetilde{F}^{\alpha\beta}\phi_{\alpha\beta} \leq C_1|\nabla\phi|+C_2\phi.$$
Therefore by the strong minimum principle, $\phi\equiv0$ in $\mathcal{O}$, thus $\{x:\phi(x)=0\}$ is an open and closed set. So $\phi\equiv0$, i.e., $\{h_{ij}\}$ is  constant rank on $M$.  Since $M$ is compact, there is a point $X\in M$ such that all the principal curvatures of $M$ at $X$ are positive. The proof is complet.
\end{proof}

\subsection{Existence and uniqueness }\

Let $\kappa = \kappa(\rho) = (\kappa_1(\rho), \ldots, \kappa_n(\rho))$  be the eigenvalues of the second fundamental form $(h_{ij})$ with respect to the first fundamental form $(g_{ij})$ of the spherical graph defined by $\rho$, and $\tilde{F}(\Lambda(\rho)) = \left[\frac{\sigma_k(\Lambda(\rho))}{\sigma_l(\Lambda(\rho))}\right]^{\frac{1}{k-l}}$. Equation~\eqref{G-eq} can be written as
\begin{equation}\label{qceq}
\tilde F(\Lambda(\rho))   = f^{\frac{1}{k-l}} \rho^{\frac{b+2q}{k-l}} (\rho^2 + |\nabla \rho|^2)^{-\frac{q}{2(k-l)}} := K(x, \rho, \nabla \rho).
\end{equation}
Then we denote $\tilde{F}(x, \rho, \nabla\rho, \nabla^2\rho) := \tilde{F}(\Lambda(\rho))$. Let $L$ denote the linearized operator of $\tilde{F}(\Lambda(\rho)) - {K}(x, \rho, \nabla\rho)$ at a  $(\mathbf{p}, k)$-convex  solution $\rho$ of equation \eqref{qceq}, i.e.,
$$
Lv = \sum_{i,j} \frac{\partial \tilde{F}}{\partial \rho_{ij}} v_{ij} + \sum_k \left( \frac{\partial \tilde{F}}{\partial \rho_k} - \frac{\partial K}{\partial \rho_k} \right) v_k + \left( \frac{\partial \tilde{F}}{\partial \rho} - \frac{\partial K}{\partial \rho} \right) v.
$$
The following Lemmas will  be used in the proof of the existence results.
\begin{lemma}\label{linearized}
Let $-b-q-k+l>0$, if $w$ satisfies $Lw = 0$ on $\mathbb{S}^n$, then $w \equiv 0$ on $\mathbb{S}^n$.
\end{lemma}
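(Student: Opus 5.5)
The plan is to exploit the scaling structure of the equation and compare the linearized operator with the one generated by dilations. Work with $u=-\log\rho$ as in the $C^1$ estimate, so that \eqref{qceq} becomes
\begin{equation*}
\frac{\sigma_k}{\sigma_l}(\Lambda(a_{ij}))=f\,e^{(-b-q-k+l)u}(1+|\nabla u|^2)^{\frac{k-l-q}{2}}.
\end{equation*}
In this equation the matrix $a_{ij}=\bar g^{il}(\delta_{lm}+u_lu_m+u_{lm})\bar g^{mj}$ depends only on $\nabla u$ and $\nabla^2 u$, not on $u$ itself. The substitution $w=-v/\rho$, i.e.\ $v=-\rho w$, relates the linearization in $\rho$ to the linearization in $u$. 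Up to multiplication by a positive function, the linearized operator becomes
\begin{equation*}
\tilde L w=F^{ij}\frac{\partial a_{ij}}{\partial u_{pq}}w_{pq}+b^k w_k-(-b-q-k+l)\,f e^{(-b-q-k+l)u}(1+|\nabla u|^2)^{\frac{k-l-q}{2}}\,w .
\end{equation*}
Here $b^k$ collects the first-order coefficients. Since $Lw=0$ is equivalent to $\tilde L w=0$ for the transformed function, it suffices to show $\tilde L$ has trivial kernel.

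The key feature is that the zeroth-order coefficient is exactly $-(-b-q-k+l)$ times the right-hand side. This is strictly negative because $f>0$ and $-b-q-k+l>0$. The reason is that $u$ enters the equation only through the factor $e^{(-b-q-k+l)u}$, so differentiating in $u$ produces only this term.

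The second-order part is elliptic by Proposition \ref{f24}. This holds because $\partial a_{ij}/\partial u_{pq}=\bar g^{ip}\bar g^{qj}$ is positive definite and $F^{ij}>0$ at a $(\mathbf{p},k)$-convex solution.

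The maximum principle then finishes the argument. At a positive maximum of the transformed $w$, we have $\nabla w=0$ and $\nabla^2w\le 0$, so
\begin{equation*}
0=\tilde Lw\le -(-b-q-k+l)\,F\,w<0,
\end{equation*}
which is a contradiction. Applying the same argument at a negative minimum shows $w\equiv 0$, and hence the original $w\equiv0$.

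The main obstacle is carrying out the change of variables carefully, to confirm that no extra zeroth-order terms arise. The quantities $\langle X,\nu\rangle$, $|X|^b$ and the curvature all scale with $\rho$, and one must check that after the substitution $v=-\rho w$ all $\rho$-dependence collapses into the single exponential factor. I would verify this in either of two ways. The first is to note that the operator $\rho\mapsto \tilde F(\Lambda(\rho))-K$ is homogeneous under dilations $\rho\mapsto t\rho$, with $\tilde F$ of degree $-1$ and $K$ of degree $(b+q)/(k-l)$. The second is to compute the $u$-linearization directly from \eqref{c121}. In either form, the degree mismatch $-1-\frac{b+q}{k-l}=\frac{-b-q-k+l}{k-l}>0$ is exactly the sign needed.
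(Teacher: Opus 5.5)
Your proof is correct and is essentially the paper's argument. The paper stays in the $\rho$ variable: it uses the dilation homogeneity of $\tilde F$ (degree $-1$) and $K$ (degree $\frac{b+q}{k-l}$) to get $L\rho=-(\frac{b+q}{k-l}+1)K>0$, then writes $w=v\rho$ and runs the maximum principle on $v=w/\rho$. That $v$ is, up to sign, exactly your $u$-variation $-w/\rho$; you obtain the sign of its zeroth-order coefficient by reading it off directly from \eqref{c121}, since $a_{ij}$ does not depend on $u$. Two small points should be made explicit. First, the identification $Lw=0\iff\tilde L(-w/\rho)=0$ uses that $\rho$ is a solution, which is what kills the term from differentiating the positive conversion factor. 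Second, you should use different letters for the $\rho$-variation and the $u$-variation, since your $w$ currently stands for both.
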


\begin{proof}

From \eqref{gij} and  \eqref{hij}, we obtain $\lambda(t\rho)=t^{-1}\lambda(\rho)$, and   $\Lambda(t\rho) = t^{-1}\Lambda(\rho)$. Thus
\begin{equation}\label{c61}
\begin{aligned}
\tilde{F}\big(x, t\rho, \nabla(t\rho), \nabla^2(t\rho)\big) = \tilde{F}\big(\Lambda(t\rho)\big) = \tilde{F}\big(t^{-1}\Lambda(\rho)\big)=\frac{1}{t}\tilde{F}\big(\Lambda(\rho)\big).
\end{aligned}
\end{equation}
Applying  $\left.\frac{d}{dt}\right|_{t=1}$ to both sides of \eqref{c61} yields
\begin{eqnarray}\label{c62}
\begin{aligned}
\sum_{i,j} \frac{\partial \tilde{F}}{\partial \rho_{ij}} \rho_{ij} + \sum_k \frac{\partial \tilde{F}}{\partial \rho_k} \rho_k + \frac{\partial \tilde{F}}{\partial \rho} \rho  = -\tilde{F}\big(\Lambda(\rho)\big).
\end{aligned}
\end{eqnarray}
On the other hand,
\begin{equation}\label{c6666}
\begin{aligned}
{K}(x, t\rho, \nabla(t\rho))= f^{\frac{1}{k-l}} (t\rho)^{\frac{b+2q}{k-l}} ((t\rho)^2 + |\nabla (t\rho)|^2)^{-\frac{q}{2(k-l)}}
= t^{\frac{b+q}{k-l}} {K}(x, \rho, \nabla\rho).
\end{aligned}
\end{equation}
We also apply $\left.\frac{d}{dt}\right|_{t=1}$ to this equation,
\begin{equation}\label{c63}
\begin{aligned}
\sum_k \frac{\partial {K}}{\partial \rho_k} \rho_k + \frac{\partial {K}}{\partial \rho} \rho =\frac{b+q}{k-l} {K}(x, \rho, \nabla\rho).
\end{aligned}
\end{equation}
Combining \eqref{c62}, \eqref{c63} and $\frac{b+q}{k-l}<-1$, we obtain
$$L\rho = -\tilde{F}\big(\Lambda(\rho)\big) - \frac{b+q}{k-l} {K}(x, \rho, \nabla\rho)= -(\frac{b+q}{k-l}+1) {K}(x, \rho, \nabla\rho) > 0.$$
Set $w = v\rho$, we get
\begin{equation*}\label{c64}
\begin{aligned}
 Lw =& \rho\Big( \sum_{i,j} \frac{\partial \tilde{F}}{\partial \rho_{ij}} v_{ij} + \sum_k \left( \frac{\partial \tilde{F}}{\partial \rho_k} - \frac{\partial K}{\partial \rho_k} \right) v_k \Big)+2\sum_{i,j} \frac{\partial \tilde{F}}{\partial \rho_{ij}}v_i\rho_{j}\\
&+v\Big(\sum_{i,j} \frac{\partial \tilde{F}}{\partial \rho_{ij}} \rho_{ij} + \sum_k \left( \frac{\partial \tilde{F}}{\partial \rho_k} - \frac{\partial K}{\partial \rho_k} \right) \rho_k + \left( \frac{\partial \tilde{F}}{\partial \rho} - \frac{\partial K}{\partial \rho} \right) \rho\Big)\\
= &\rho \Big( \sum_{i,j} \frac{\partial \tilde{F}}{\partial \rho_{ij}} v_{ij} + \sum_k \left( \frac{\partial \tilde{F}}{\partial \rho_k} - \frac{\partial K}{\partial \rho_k} \right) v_k \Big)+2\sum_{i,j} \frac{\partial \tilde{F}}{\partial \rho_{ij}}v_i\rho_{j} + vL\rho.
\end{aligned}
\end{equation*}
Suppose $v$ attains its maximum at point $X_0$, then $\nabla v(X_0)=0$ and $\{v_{ij}(X_0)\}\leq 0$.
By \eqref{hij}, we have $\left( \frac{\partial \tilde{F}}{\partial \rho_{ij}} \right) < 0$.
Therefore, at the maximum point $X_0$,
$$0=Lw\geq v(X_0)L\rho.$$
Since $L\rho > 0$, it implies $ v(X_0)\leq0$. Similarly,  $v_{\min }\geq0$. That is $v \equiv 0$. Hence $w \equiv 0$, the linearized operator $L$ is invertible at any positive solution of equation \eqref{G-eq}.
\end{proof}

\begin{lemma}\label{uni}
Let $-b-q-k+l>0$. Suppose $\rho_1, \rho_2$ are  $(\mathbf{p}, k)$-convex solutions of equation \eqref{G-eq}. Then $\rho_1 \equiv \rho_2$.
\begin{proof}
We prove it by contradiction. Suppose $\rho_2 > \rho_1$ somewhere on $\mathbb{S}^n$.
Then there exists a constant $t \geq 1$  such that
$$t\rho_1 \geq \rho_2 \quad \text{on} \ \mathbb{S}^n, \quad t\rho_1 = \rho_2 \quad \text{at some point} \ x \in \mathbb{S}^n.$$
Using  \eqref{c6666} and the assumption $\frac{b+q}{k-l}<-1$, we obtain
\begin{equation*}
\begin{aligned}
K(x, t\rho_1, \nabla(t\rho_1))
= t^{\frac{b+q}{k-l}}K(x, \rho_1, \nabla\rho_1)
= t^{\frac{b+q}{k-l}}\tilde{F}(\Lambda(\rho_1))
\leq t^{-1}\tilde{F}(\Lambda(\rho_1))
=& \tilde{F}(\Lambda(t\rho_1)).
\end{aligned}
\end{equation*}
It follows that
\begin{equation}\label{uni111}
\begin{aligned}
\tilde{F}(\Lambda(t\rho_1)) - K(x, t\rho_1, \nabla(t\rho_1)) \geq 0, \quad
\tilde{F}(\Lambda(\rho_2)) - K(x, \rho_2, \nabla\rho_2) = 0.
\end{aligned}
\end{equation}
Hence
$${L}(t\rho_1 - \rho_2) \geq 0,$$
where ${L}$ is the linearized  elliptic operator of $\tilde{F}(\Lambda(\rho)) - {K}(x, \rho, \nabla\rho)$. The strong maximum principle yields $t\rho_{1}-\rho_{2}\equiv0$ on $\mathbb{S}^{n}$. Since $\frac{b+q}{k-l}<{-1}$, from \eqref{uni111}, we conclude that $t=1$.
\end{proof}
\end{lemma}

Now we are ready to prove  Theorem~\ref{k-convex} {\em(\romannumeral1)} and Theorem~\ref{k-convexk1} {\em(\romannumeral1)}  for the case $-b-q - k+l>0 $ by the continuity method.


\begin{proof}[\textbf{Proof of Theorem~\ref{k-convex} (\romannumeral1)}]
For $2<k\leq C_{n-1}^{\mathbf{p}-1}+1$, $0 \leq l<\min\{C_{n-1}^{\mathbf{p}-1}, k\}$,  and $\mathbf{p}>1$,
we consider the following family of equations
\begin{equation}\label{ft}
F_t=\frac{\sigma_k(\Lambda)}{\sigma_l(\Lambda)}=f_t(X)|X|^b\langle X,\nu\rangle^q=f_t \rho^{b+2q} (\rho^2 + |\nabla \rho|^2)^{-q/2} \quad \text{on } \mathbb{S}^n,
\end{equation}
where $f_t= [tf^{\frac{1}{k-l}}+(1-t)(\frac{C_N^k}{C_N^l}\mathbf{p}^{k-l})^{\frac{1}{k-l}}]^{k-l}$ with $0 \leq t \leq 1$. Denote
$$I = \{t \in [0,1] |~equation~\eqref{ft}~has~a~(\mathbf{p}, k)-convex~ solution\}.$$
$I$ is nonempty because $\rho = 1$ is a solution for $t = 0$. By Lemma~\ref{linearized}, we know the linearized operator is invertible.
Therefore, by the implicit function theorem, for each $t_0 \in I$, there exists a neighborhood $\mathcal{N}$ of $t_0$  such that for every
$t \in \mathcal{N}$, equation \eqref{ft} admits a $(\mathbf{p}, k)$-convex~ solution $\rho_t$.
Hence  $I$ is open.
By  the a priori estimates established in Theorem~\ref{c0}, Theorem~\ref{c1-877} and  Theorem~\ref{c2}, together with Evans-Krylov theorem, we obtain
$$
\|\rho_t\|_{C^{3,\alpha}(\mathbb{S}^n)} \leq C,
$$
where $C$ depends only on $n$, $k$, $l$, $\mathbf{p}$, $b, q$, $\min_{\mathbb{S}^n} f$, $\max_{\mathbb{S}^n} f$, $|f|_{C^1}$, $|f|_{C^2}$.
These a priori estimates imply that $I$ is closed.
 Therefore, we conclude that $I=[0,1]$, which means that equation \eqref{G-eq} has a $(\mathbf{p}, k)$-convex~ solution.
The uniqueness part of the theorem follows from Lemma~\ref{uni}.

Next, we prove the strict convexity of the solution under condition \eqref{908998} and the assumption $q\geq 0$.
Suppose $\rho$ is a $(\mathbf{p}, k)$-convex solution for equation \eqref{ft} with $t=t_0$ and $\{h_{ij}\}$ is semi-positive definite. Since $f^{\frac{1}{k-l}}(\frac{X}{|X|}) |X|^{\frac{b}{k-l}}$ is locally concave, it is easy to verify that $f_t^{\frac{1}{k-l}}(\frac{X}{|X|}) |X|^{\frac{b}{k-l}}$ is locally concave with the condition  $q\geq 0$, then by Constant Rank Theorem \ref{crt}, $\{h_{ij}\}$ must be positive definite.
This implies that the solution hypersurface to \eqref{G-eq} must be strictly convex.
\end{proof}

\begin{proof}[\textbf{Proof of Theorem~\ref{k-convexk1} (\romannumeral1)}]
By Theorem~\ref{c0}, Theorem~\ref{c1-877}, and Theorem~\ref{c22}, one readily obtains the necessary a priori estimates for equation \eqref{G-eqk}. The remaining argument is essentially identical to that of Theorem~\ref{k-convex} {\em(\romannumeral1)}, and hence we omit the details.
\end{proof}


\section{Existence and uniqueness in homogeneous case}

In this section, we consider equation \eqref{G-eq} for the case $-b-q -k+l=0$.  We study the following equation
\begin{equation}
\label{eq:5.1}
F=\frac{\sigma_k}{\sigma_l}(\Lambda(\rho))= f|X|^{b-\varepsilon}\langle X,\nu\rangle^q=f \rho^{b+2q-\varepsilon} (\rho^2 + |\nabla \rho|^2)^{-\frac{q}{2}}, \quad \text{on } \mathbb{S}^n
\end{equation}
for any small $\varepsilon > 0$.

\subsection{The a priori estimates}

Following the proof of Theorem \ref{c0}, we can easily derive $C^0$ estimates for equation \eqref{eq:5.1}.

\begin{theorem}
\label{thm:5.1}
Let $M\subset \mathbb{R}^{n+1}$ be a closed, star-shaped and  $(\mathbf{p}, k)$-convex hypersurface satisfying the equation \eqref{eq:5.1} with a positive function $f \in C^2(\mathbb{S}^n)$.
If $-b-q -k+l=0$, then
$$
\frac{C_N^l \min\limits_{\mathbb{S}^n} f}{C_N^k \mathbf{p}^{k-l}} \leq \rho^{\varepsilon}(x) \leq \frac{C_N^l \max\limits_{ \mathbb{S}^n} f}{C_N^k \mathbf{p}^{k-l}}, \quad \forall x \in \mathbb{S}^n.
$$
\end{theorem}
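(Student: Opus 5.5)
The argument is the maximum-principle comparison from the proof of Theorem~\ref{c0}, applied to the perturbed right-hand side of \eqref{eq:5.1}. The point is that after the substitution $b\mapsto b-\varepsilon$, the exponent $-b-q-k+l$ becomes $-(b-\varepsilon)-q-k+l=\varepsilon>0$, which is why the estimate involves $\rho^{\varepsilon}$.

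\emph{Upper bound.} Let $\rho$ attain its maximum at $x_0\in\mathbb{S}^n$, so that $\nabla\rho(x_0)=0$ and $\nabla^2\rho(x_0)\leq 0$. From \eqref{gij}--\eqref{hij}, at $x_0$ we have $g_{ij}=\rho^2\sigma_{ij}$ and $h_{ij}\geq \rho\,\sigma_{ij}$. Hence every principal curvature satisfies $\kappa_i(x_0)\geq 1/\max\rho$, and every $\Lambda_I\geq \mathbf{p}/\max\rho$. Since $\Lambda$ lies in the cone and $\sigma_k/\sigma_l$ is increasing there, evaluating at the constant vector gives
\[
\frac{\sigma_k}{\sigma_l}(\Lambda)\geq \frac{C_N^k}{C_N^l}\,\mathbf{p}^{k-l}(\max\rho)^{-(k-l)} .
\]
On the other side, because $\nabla\rho(x_0)=0$, the right-hand side of \eqref{eq:5.1} at $x_0$ equals $f\,\rho^{b+2q-\varepsilon}\rho^{-q}=f\,\rho^{b+q-\varepsilon}\leq \max f\,(\max\rho)^{b+q-\varepsilon}$. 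Combining the two and using $-b-q-k+l=0$ gives
\[
(\max\rho)^{\varepsilon}\leq \frac{C_N^l\max f}{C_N^k\mathbf{p}^{k-l}} .
\]

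\emph{Lower bound.} This is symmetric. At a minimum point of $\rho$ we have $\nabla\rho=0$ and $\nabla^2\rho\geq 0$, so $\kappa_i\leq 1/\min\rho$ and $\Lambda_I\leq \mathbf{p}/\min\rho$. Monotonicity of $\sigma_k/\sigma_l$ on the cone, comparing $\Lambda$ with the constant vector $(\mathbf{p}/\min\rho,\dots,\mathbf{p}/\min\rho)$, gives the reverse inequality. The right-hand side at this point is $f\,(\min\rho)^{b+q-\varepsilon}\geq \min f\,(\min\rho)^{b+q-\varepsilon}$. This yields $(\min\rho)^{\varepsilon}\geq C_N^l\min f/(C_N^k\mathbf{p}^{k-l})$.

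The only point needing care is the monotonicity comparison. If $\Lambda\geq c\,\mathbf{1}$ componentwise with $\Lambda$ in $\tilde\Gamma_k$, then $\sigma_k/\sigma_l(\Lambda)\geq \sigma_k/\sigma_l(c\mathbf{1})$. I would justify this by the ellipticity in Proposition~\ref{f24}, integrating along the segment from $c\mathbf{1}$ to $\Lambda$. That segment stays in the convex cone $\tilde\Gamma_k$ because $c\mathbf{1}$ is in it and the difference $\Lambda-c\mathbf{1}$ is nonnegative, exactly as was used implicitly in Theorem~\ref{c0}. For the reverse direction at the minimum point, the same segment argument applies because the constant vector dominates $\Lambda$ and both lie in the cone.
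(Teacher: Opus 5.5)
Your proposal is correct and follows the paper's route: the paper obtains this estimate by repeating the maximum/minimum-point comparison from Theorem~\ref{c0} with $b$ replaced by $b-\varepsilon$, so that the exponent $-b-q-k+l$ becomes $\varepsilon$. Your segment argument for the monotonicity comparison fills in a step the paper leaves implicit. Note that Proposition~\ref{f24} is stated in terms of $\kappa$, so either run the segment in $\kappa$-space inside the convex set $\mathcal{P}_{\mathbf{p},k}$, or use the standard positivity of $\partial(\sigma_k/\sigma_l)/\partial\Lambda_I$ on $\tilde{\Gamma}_k$.
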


Based on $C^0$ estimates, we can obtain the following $C^1$ estimates.

\begin{theorem}\label{7.1}
Let $0 \leq l <k\leq n$, $\mathbf{p}>1$, $M\subset \mathbb{R}^{n+1}$ be a closed, star-shaped and  $(\mathbf{p}, k)$-convex hypersurface satisfying the equation \eqref{eq:5.1} with a  positive function $f\in C^{2}(\mathbb{S}^{n})$.
If $-b-q -k+l=0$ and one of the following conditions holds:
\begin{enumerate}
    \item[{\em(\romannumeral1)}] $q \neq 0$;

    \item[{\em(\romannumeral2)}] $q=0$  and $\frac{|\nabla f|}{f}<2(k-l)\sqrt{\frac{\mathbf{p}-1}{\mathbf{p}}}$.
\end{enumerate}
Then
$$\max_{\mathbb{S}^{n}}|\nabla \log\rho|\leq C,$$
where the constant $C$ depends on $n,k,l,\mathbf{p},\min_{\mathbb{S}^{n}}f, \max_{\mathbb{S}^{n}} f$ and $\max_{\mathbb{S}^{n}}\frac{|\nabla f|}{f}$. In particular, we have
\begin{equation}\label{ljc1101}
1\leq\frac{\max_{\mathbb{S}^{n}}\rho}{\min_{\mathbb{S}^{n}}\rho}\leq e^C.
\end{equation}
\end{theorem}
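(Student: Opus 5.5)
We follow the proof of Theorem \ref{c1-877}, now for equation \eqref{eq:5.1}. Set $u=-\log\rho$. As in \eqref{c121}, the equation becomes
$$F=\frac{\sigma_k}{\sigma_l}(\Lambda(W))=fe^{\varepsilon u}(1+|\nabla u|^2)^{\frac{k-l-q}{2}},$$
because $-b+\varepsilon-q-k+l=\varepsilon$. Take $Q=|\nabla u|^2$ at its maximum point $x_0$ and choose the same frame, so that $u_{1i}=0$ and $\{a_{ij}\}$ is diagonal with $\kappa_1=1$. Repeating \eqref{c111a}--\eqref{equiiur-01} gives
$$0\ \ge\ |\nabla u|^2\sum_{i\ge2}F^{ii}+\sum_{i\ge2}F^{ii}u_{ii}^2+\varepsilon F|\nabla u|^2-F\frac{|\nabla f|}{f}|\nabla u|.$$
The coefficient $\varepsilon$ is now small, so we cannot complete the square as before. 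We simply drop the term $\varepsilon F|\nabla u|^2\ge0$.

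Next we apply \eqref{equiiur-001}, Proposition \ref{f24} and \eqref{equiiur-011} exactly as before, and divide by $F$. This gives
$$0\ \ge\ \frac{k-l}{\mathbf p}c\,F^{\frac1{k-l}}+(\mathbf p-1)(k-l)c^{-1}(1+|\nabla u|^2)F^{-\frac1{k-l}}-2(k-l)-\frac{|\nabla f|}{f}|\nabla u|,$$
with $c=(C_N^l/C_N^k)^{1/(k-l)}$. We also keep the term $|\nabla u|^2\sum_{\{I|1\notin I\}}G^{II}\ge0$ in reserve.

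\emph{Case (ii), $q=0$.} Apply AM--GM to the first two terms. They are bounded below by $2(k-l)\sqrt{(\mathbf p-1)/\mathbf p}\,(1+|\nabla u|^2)^{1/2}$. Hence
$$\Big(2(k-l)\sqrt{\tfrac{\mathbf p-1}{\mathbf p}}-\max\tfrac{|\nabla f|}{f}\Big)|\nabla u|\le 2(k-l).$$
By the hypothesis the bracket is a positive constant, and this bounds $|\nabla u|$. The estimate is independent of $\varepsilon$.

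\emph{Case (i), $q\ne0$.} Here we use the equation to express $F$ in terms of $|\nabla u|$. By Theorem \ref{thm:5.1}, $e^{\varepsilon u}=\rho^{-\varepsilon}$ is bounded above and below by constants depending only on $f$, $N,k,l,\mathbf p$. Hence $F\approx (1+|\nabla u|^2)^{\frac{k-l-q}{2}}$, so $F^{\frac{1}{k-l}}\approx(1+|\nabla u|^2)^{\frac12-\frac{q}{2(k-l)}}$.

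If $q<0$, the first term $cF^{1/(k-l)}$ grows like $|\nabla u|^{1+|q|/(k-l)}$. This beats the linear term $\frac{|\nabla f|}{f}|\nabla u|$ and the constant, so $|\nabla u|$ is bounded.

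If $q>0$, the second term behaves like $(1+|\nabla u|^2)\,F^{-1/(k-l)}\approx|\nabla u|^{1+q/(k-l)}$, which is superlinear, and again $|\nabla u|$ is bounded.

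The main obstacle is making the two-sided comparison $F\approx(1+|\nabla u|^2)^{(k-l-q)/2}$ uniform in $\varepsilon$. This needs the bound on $\rho^{\varepsilon}$ from Theorem \ref{thm:5.1} rather than a bound on $\rho$ itself, which degenerates as $\varepsilon\to0$. We must also check that the exponent $1\pm q/(k-l)$ stays strictly above $1$ in each sign case; this holds precisely because $q\neq0$.

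Finally, \eqref{ljc1101} follows from the bound on $|\nabla\log\rho|$ by integrating along great circles, as at the end of the proof of Theorem \ref{c1-877}.
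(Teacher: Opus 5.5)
Your proposal is correct and follows the paper's proof essentially step by step. It uses the same test function $|\nabla u|^2$, discards $\varepsilon F|\nabla u|^2$, reaches the same inequality via \eqref{equiiur-001} and \eqref{equiiur-011}, applies AM--GM when $q=0$, and for $q\neq0$ uses the $\varepsilon$-uniform bound on $\rho^{\varepsilon}$ from Theorem \ref{thm:5.1} to make one of $(1+|\nabla u|^2)^{\frac12\mp\frac{q}{2(k-l)}}$ superlinear. The only difference is cosmetic: you treat $q<0$ and $q>0$ separately, whereas the paper keeps both powers together in a single inequality.
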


\begin{proof}

Let $u=-\log \rho$, then equation~\eqref{eq:5.1} becomes
\begin{equation*}\label{eq:5.4}
F=\frac{\sigma_{k}}{\sigma_{l}}(\Lambda(W))=fe^{\varepsilon u}\left(1+|\nabla u|^{2}\right)^{\frac{k-l-q}{2}},
\end{equation*}
where $ W$ is the symmetric matrix representing the linear derivation $\mathcal{D}_A$
associated with $A=\{a_{ij}\}$, as defined in  \eqref{propw}.
Consider the test function $Q=|\nabla u|^{2}$ and assume that the point $x_{0}$ is the maximum point of $Q$. Choosing a local coordinate frame field around $x_{0}$ such that
$$u_{1}=|\nabla u|,\quad \text{and}~~\{u_{ij}\}_{2\leq i,j\leq n}~\text{is ~ diagonal}.$$
By an argument similar to that used in the proof of Theorem \ref{c1-877}, we obtain
\begin{equation}\label{equiiur-015}
\begin{aligned}
 0 \geq&|\nabla u|^{2}\sum_{i=2}^{n}{F}^{ii}+\sum_{i=2}^{n}{F}^{ii}u_{ii}^{2}+\frac{f_1}{f}|\nabla u|F+\varepsilon F|\nabla u|^{2}\\
 \geq &|\nabla u|^{2}\sum_{i=2}^{n}{F}^{ii}+\sum_{i=2}^{n}{F}^{ii}u_{ii}^{2}- |\nabla u| \frac{|\nabla f|}{f}F,
\end{aligned}
\end{equation}
Note that at the point $x_0$ , the matrices $\{u_{ij}\}$ and $\{a_{ij}\}$ are diagonal, and  the eigenvalues of matrix $\{a_{ij}\}$ satisfy
$$\kappa_{1}=1,\quad\kappa_{2}=1+u_{22},\cdots,\quad\kappa_{n}=1+u_{nn}.$$

Without loss of generality, we may assume that $\kappa_2\geq \kappa_3\geq \cdots\geq \kappa_n$.
From \eqref{equiiur-001}, \eqref{equiiur-011} and \eqref{equiiur-015}, it follows that
\begin{eqnarray*}\label{c111115}
\begin{aligned}
 0\geq&\frac{k-l}{\mathbf{p}}\left(\frac{C_N^l}{C_N^k}\right)^{\frac{1}{k-l}} F^{\frac{1}{k-l}}
+ (\mathbf{p}-1)(k-l) \left(\frac{C_N^k}{C_N^l}\right)^{\frac{1}{k-l}}(1+|\nabla u|^2) F^{-\frac{1}{k-l}} \\
&- |\nabla u| \frac{|\nabla f|}{f}-2(k-l).
\end{aligned}
\end{eqnarray*}

If $q = 0$, by the Cauchy-Schwarz inequality, we have
\begin{eqnarray*}
\begin{aligned}
 \left(2(k-l)\sqrt{\frac{\mathbf{p}-1}{\mathbf{p}}} -\frac{|\nabla f|}{f} \right)|\nabla u|\leq 2(k-l).
\end{aligned}
\end{eqnarray*}
Hence $|\nabla u| \leq C$.

If $q\neq 0$,  from Theorem \ref{thm:5.1},
\begin{eqnarray*}
\begin{aligned}
\mathbf{p}^{k-l}\frac{C_N^k}{C_N^l}\cdot \frac{ \min_{\mathbb{S}^{n}} f}{\max_{\mathbb{S}^{n}} f} \leq e^{\varepsilon u} f \leq \mathbf{p}^{k-l} \frac{C_N^k}{C_N^l}\cdot \frac{\max _{\mathbb{S}^{n}} f}{\min_{\mathbb{S}^{n}} f}
\end{aligned}
\end{eqnarray*}
for any $x \in \mathbb{S}^n$. Hence
\begin{eqnarray*}
\begin{aligned}
0\geq& (k-l)\left(\frac{ \min_{\mathbb{S}^{n}} f}{\max_{\mathbb{S}^{n}} f}\right)^{\frac{1}{k-l}}(1+|\nabla u|^{2})^{\frac{1}{2}-\frac{q}{2(k-l)}}  - |\nabla u| \frac{|\nabla f|}{f}-2(k-l)\\
&+ \frac{(k-l)(\mathbf{p}-1)}{\mathbf{p}}  \left(\frac{ \min_{\mathbb{S}^{n}} f}{\max_{S^{n}} f}\right)^{\frac{1}{k-l}}(1+|\nabla u|^{2})^{\frac{1}{2}+\frac{q}{2(k-l)}}
\\
\geq&C(n,\mathbf{p},k,l)\left(\frac{ \min_{\mathbb{S}^{n}}f}{\max_{\mathbb{S}^{n}} f}\right)^{\frac{1}{k-l}}\left((1+|\nabla u|^{2})^{\frac{1}{2}-\frac{q}{2(k-l)}}+(1+|\nabla u|^{2})^{\frac{1}{2}+\frac{q}{2(k-l)}}\right)\\
&-|\nabla u|\frac{|\nabla f|}{f}-2(k-l),
\end{aligned}
\end{eqnarray*}
which implies
$$|\nabla u| \leq C,$$
where $C$ depends on $n, k, l, \mathbf{p}, q$, $\min_{\mathbb{S}^{n}} f, \max_{\mathbb{S}^{n}} f$ and $\max_{\mathbb{S}^{n}} \frac{|\nabla f|}{f}$.
As in the proof of Theorem \ref{c1-877}, it implies \eqref{ljc1101} holds.
\end{proof}

\begin{theorem}\label{thm:5.3c1}
Let $2<k\leq C_{n-1}^{\mathbf{p}-1}+1$, $0 \leq l<\min\{C_{n-1}^{\mathbf{p}-1}, k\}$, $-b-q-k+l=0$, and $q, f$ satisfy (\romannumeral1) or (\romannumeral2) in Theorem \ref{7.1}. Let $M\subset \mathbb{R}^{n+1}$ be a closed, star-shaped, and $(\mathbf{p},k)$-convex hypersurface satisfying \eqref{eq:5.1} with a positive function $f\in C^{2}(\mathbb{S}^{n})$. Set
\[
\overline{\rho}:=\frac{\rho}{\min_{\mathbb{S}^n} \rho},
\]
then there exists a positive constant $C''$, depending only on $n, k, l, \mathbf{p}, q, |f|_{C^2}, \max_{\mathbb{S}^n}f$, and $\min_{\mathbb{S}^n}f$, such that
\[
|\nabla^2 \overline{\rho}| \leq C''.
\]
\end{theorem}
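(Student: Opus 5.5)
The plan is to exploit the scaling behaviour of the equation and reduce to the curvature estimate of Theorem \ref{c2}, applied to the rescaled hypersurface $\overline{M}=M/\min_{\mathbb{S}^n}\rho$ with radial function $\overline{\rho}$.

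First, I rewrite the equation for $\overline{M}$. Set $m=\min_{\mathbb{S}^n}\rho$. From \eqref{gij}--\eqref{hij}, $\kappa(\overline{\rho})=m\,\kappa(\rho)$, so $\Lambda(\overline{\rho})=m\Lambda(\rho)$ and
\[
\frac{\sigma_k}{\sigma_l}(\Lambda(\overline{\rho}))=m^{k-l}\frac{\sigma_k}{\sigma_l}(\Lambda(\rho)).
\]
The right-hand side $f|X|^{b-\varepsilon}\langle X,\nu\rangle^q$ is homogeneous of degree $b+q-\varepsilon=-(k-l)-\varepsilon$ under $X\mapsto X/m$. Hence $\overline{M}$ satisfies
\[
\frac{\sigma_k}{\sigma_l}(\Lambda(\overline{\kappa}))=m^{-\varepsilon}f\Big(\frac{\overline X}{|\overline X|}\Big)|\overline X|^{b-\varepsilon}\langle \overline X,\overline\nu\rangle^q,
\]
which is an equation of the form \eqref{G-eq} with $f$ replaced by $\overline{f}:=m^{-\varepsilon}f$ and $b$ replaced by $b-\varepsilon$.

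Second, I control all the data entering Theorem \ref{c2} uniformly in $\varepsilon$. By Theorem \ref{thm:5.1}, $m^{\varepsilon}$ lies between the two positive constants $\frac{C_N^l\min f}{C_N^k\mathbf p^{k-l}}$ and $\frac{C_N^l\max f}{C_N^k\mathbf p^{k-l}}$. Therefore $\overline f$ has $\min$, $\max$ and $C^2$ norm (hence also $|\overline f^{1/(k-l)}|_{C^2}$) bounded by constants depending only on $n,k,l,\mathbf p,|f|_{C^2},\max f,\min f$. By Theorem \ref{7.1}, $1\le\overline\rho\le e^C$ and $|\nabla\log\overline\rho|=|\nabla\log\rho|\le C$, so $|\overline\rho|_{C^1}\le C$. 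The exponent $b-\varepsilon$ ranges over a compact set for $\varepsilon\in(0,1)$. Theorem \ref{c2} then gives $|\overline\kappa_i|\le C$, with $C$ independent of $\varepsilon$ and of $m$.

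Finally, I convert the curvature bound into a Hessian bound. From \eqref{hij}, and with $\overline{\rho}$, $|\nabla\overline\rho|$ and $v$ bounded above and below,
\[
-D_iD_j\overline\rho=v\,h_{ij}-\overline\rho\sigma_{ij}-\frac{2}{\overline\rho}D_i\overline\rho D_j\overline\rho .
\]
Since $h_{ij}$ has eigenvalues $\overline\kappa_i$ relative to $g_{ij}$, and $g_{ij}$ is comparable to $\sigma_{ij}$ by \eqref{gij}, we get $|h_{ij}|\le C$. Thus $|\nabla^2\overline\rho|\le C''$.

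The main obstacle is checking that the constant in Theorem \ref{c2} (Zhou's estimate) depends only on the listed quantities and stays uniform as the exponent $b$ varies with $\varepsilon$; in particular, it must not depend on a positive lower bound for $-b-q-k+l$. If this uniformity fails, I would instead re-run the maximum-principle argument of Theorem \ref{c22} type with the test function $\log\sigma_1-\log u$, tracking the $\varepsilon$-dependence by hand.
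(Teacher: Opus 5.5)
Your proposal is correct and follows essentially the same route as the paper. Both arguments rescale by $\min_{\mathbb{S}^n}\rho$, use the $\varepsilon$-uniform bounds of Theorems \ref{thm:5.1} and \ref{7.1}, and apply the curvature estimate of Theorem \ref{c2} to the rescaled equation. You also spell out two points the paper leaves implicit: the two-sided bound on the factor $(\min_{\mathbb{S}^n}\rho)^{-\varepsilon}$ coming from Theorem \ref{thm:5.1}, and the step from bounded principal curvatures to a bound on $\nabla^2\overline{\rho}$ via \eqref{hij}.
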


\begin{proof}
It follows that $\overline{\rho}$ satisfies
\[
F=\frac{\sigma_k}{\sigma_l}(\Lambda(\overline{\rho}))
= f(\min_{\mathbb{S}^n}\rho)^{-\varepsilon} \overline{\rho}^{\,b+2q-\varepsilon}
\bigl(\overline{\rho}^{\,2}+|\nabla \overline{\rho}|^2\bigr)^{-\frac{q}{2}}
\qquad \text{on } \mathbb{S}^n.
\]
By Theorem~\ref{7.1}, there exist positive constants $C$ and $C'$, depending only on $n, k, l, \mathbf{p}, q$, $\min_{\mathbb{S}^{n}}f$, $\max_{\mathbb{S}^{n}} f$, and $\max_{\mathbb{S}^{n}}\frac{|\nabla f|}{f}$, but independent of $\varepsilon$, such that
\begin{equation}\label{eq:5.9c1}
1 \leq \overline{\rho} \leq \frac{\max_{\mathbb{S}^n} \rho}{\min_{\mathbb{S}^n} \rho} \leq C,
\end{equation}
and
\begin{equation}\label{eq:5.10c1}
|\nabla \overline{\rho}|
= \frac{\rho}{\min_{\mathbb{S}^n} \rho}\cdot \frac{|\nabla \rho|}{\rho}
\leq \frac{\max_{\mathbb{S}^n} \rho}{\min_{\mathbb{S}^n} \rho}\cdot \frac{|\nabla \rho|}{\rho}
\leq C'.
\end{equation}
Combining \eqref{eq:5.9c1}, \eqref{eq:5.10c1}, and Theorem~\ref{c2}, we obtain
\[
|\nabla^2 \overline{\rho}| \leq C'',
\]
where $C''$ depends  on $n, k, l, \mathbf{p}, q, |f|_{C^2}, \max_{\mathbb{S}^n}f, \min_{\mathbb{S}^n}f$, and is independent of $\varepsilon$.
\end{proof}

We rewrite equation \eqref{G-eqk} in the form
\begin{equation}
\label{eq:5.1c22}
F={\sigma_k}(\Lambda(\rho))= f|X|^{b-\varepsilon}\langle X,\nu\rangle^q=f \rho^{b+2q-\varepsilon} (\rho^2 + |\nabla \rho|^2)^{-\frac{q}{2}}, \quad \text{on } \mathbb{S}^n
\end{equation}
As a consequence of Theorem~\ref{c22}, we obtain the following result.

\begin{theorem}\label{thm:5.3c21}
Let $1\leq k\leq N$, $q\leq1$, $-b-q-k=0$, and $q, f$ satisfy (\romannumeral1) or (\romannumeral2) in Theorem \ref{7.1}. Let $M\subset \mathbb{R}^{n+1}$ be a closed, star-shaped, and $(\mathbf{p},k)$-convex hypersurface satisfying \eqref{eq:5.1c22} with a positive function $f\in C^{2}(\mathbb{S}^{n})$. Set
\[
\overline{\rho}:=\frac{\rho}{\min_{\mathbb{S}^n}\rho},
\]
then there exists a positive constant $C''$, depending only on $n, k, \mathbf{p}, q, |f|_{C^2}, \max_{\mathbb{S}^n}f$, and $\min_{\mathbb{S}^n}f$, such that
\[
|\nabla^2 \overline{\rho}| \leq C''.
\]
\end{theorem}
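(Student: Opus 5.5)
The plan is to follow the proof of Theorem~\ref{thm:5.3c1}, replacing the curvature estimate of Theorem~\ref{c2} with that of Theorem~\ref{c22}.

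First, the normalized function $\overline{\rho}=\rho/\min_{\mathbb{S}^n}\rho$ represents the dilated hypersurface $\overline{M}=M/\min\rho$. This hypersurface is still closed, star-shaped and $(\mathbf{p},k)$-convex. Since $\sigma_k(\Lambda)$ is homogeneous of degree $-k$ under dilation and $-b-q-k=0$, it satisfies
\[
\sigma_k(\Lambda(\overline{\rho}))=f(\min_{\mathbb{S}^n}\rho)^{-\varepsilon}\,\overline{\rho}^{\,b+2q-\varepsilon}\bigl(\overline{\rho}^{\,2}+|\nabla\overline{\rho}|^2\bigr)^{-\frac{q}{2}}.
\]
This is equation \eqref{G-eq2} with $\tilde f:=f(\min\rho)^{-\varepsilon}$ and exponent $b-\varepsilon$.

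Next come the $C^0$ and $C^1$ bounds. Theorem~\ref{7.1} applies with $l=0$; its proof used only Proposition~\ref{f24} and \eqref{equiiur-011}, both valid for $l=0$ and any $k\le N$. It gives $1\le\overline{\rho}\le C$ and $|\nabla\overline{\rho}|\le C'$, with constants independent of $\varepsilon$.

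The coefficient $\tilde f$ must also be controlled uniformly in $\varepsilon$. By Theorem~\ref{thm:5.1} with $l=0$, $(\min\rho)^{\varepsilon}$ lies between $\min f/(C_N^k\mathbf{p}^k)$ and $\max f/(C_N^k\mathbf{p}^k)$. Hence $\tilde f$ is bounded above and below by positive constants depending only on $f,N,k,\mathbf{p}$. Moreover $|\tilde f|_{C^2}\le C|f|_{C^2}$, since the factor $(\min\rho)^{-\varepsilon}$ is a constant.

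Then apply Theorem~\ref{c22} on $\overline M$, with $T=\tilde f|X|^{b-\varepsilon}$, to get $\sigma_1(H)\le C$. Its constant depends on $|\overline\rho|_{C^1}$, $\min\overline\rho\ge1$, $|\tilde f|_{C^2}$, $\max\tilde f$, $\min\tilde f$ and $q\le1$, all of which are bounded independently of $\varepsilon$ for $\varepsilon\in(0,1]$. The exponent $b-\varepsilon$ enters only through the bounds \eqref{c3335551}--\eqref{c3335552}, which stay uniform because $|X|$ is pinched between positive constants.

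To pass from mean curvature to all principal curvatures, use $(\mathbf{p},k)$-convexity with $k\ge1$. This gives $\sigma_1(\Lambda)=C_{n-1}^{\mathbf{p}-1}\sigma_1(\kappa)>0$. The required lower bound on each $\kappa_i$ should come from $\Lambda\in\tilde\Gamma_k\subset\tilde\Gamma_1$. If $\mathbf{p}=n$ there is nothing to prove. I would therefore aim for $|\kappa_i|\le C$ using $\sigma_1(\kappa)\le C$ together with a lower bound obtained from the cone condition; I expect this to be the main obstacle. As a first attempt: since $\kappa$ lies in the closure of the cone $\mathcal{P}_{\mathbf{p},1}$ and $\sum\kappa\le C$, one has $\kappa_{\min}\ge -C$ as long as the admissible cone is contained in a half-space-type region. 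Should this fail, $|H|^2\le C$ can be read off directly: the proof of Theorem~\ref{c22} yields $\tfrac{k-q}{2}Tu^{q-1}|H|^2\le C\sigma_1+C$, which gives $|H|^2\le C$ provided $k>q$.

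Finally, convert the curvature bound into second derivatives. Formula \eqref{hij} gives $D^2\overline\rho=\overline\rho\sigma-v h+\tfrac{2}{\overline\rho}D\overline\rho D\overline\rho$, where $h_{ij}$ is expressed in the frame $g_{ij}$. Combined with the $C^0$ and $C^1$ bounds, this yields $|\nabla^2\overline\rho|\le C''$ with $C''$ independent of $\varepsilon$.
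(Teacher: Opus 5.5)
Your outline matches the paper's. The paper states this result as a consequence of Theorem~\ref{c22}, modeled on the proof of Theorem~\ref{thm:5.3c1}: rescale by $\min\rho$, use the $\varepsilon$-uniform bounds of Theorems~\ref{thm:5.1} and \ref{7.1} with $l=0$, apply the curvature estimate, and read off $\nabla^2\overline\rho$ from \eqref{hij}.

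The genuine gap is the step you flagged, which the paper also passes over by treating the mean-curvature bound as if it were the full bound of Theorem~\ref{c2}. Theorem~\ref{c22} only bounds $\sigma_1(\kappa)$, and neither of your arguments upgrades this to $|\kappa_i|\le C$.
\begin{itemize}
\item Since $\sigma_1(\Lambda)=C_{n-1}^{\mathbf{p}-1}\sigma_1(\kappa)$, the cone $\mathcal{P}_{\mathbf{p},1}$ is exactly the half-space $\{\sigma_1(\kappa)>0\}$. A half-space together with $\sigma_1(\kappa)\le C$ gives no lower bound on $\kappa_{\min}$; take e.g.\ $\kappa=(-A,A+\delta,0,\ldots,0)$ with $A$ large.
\item Your fallback fails because $\tfrac{k-q}{2}Tu^{q-1}|H|^2\le C\sigma_1+C$ is derived only at the maximum point $X_0$ of $\ln(\sigma_1/u)$. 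It yields $\sigma_1\le C$ everywhere, but it says nothing about $|H|^2$ at other points.
\item ``If $\mathbf{p}=n$ there is nothing to prove'' is backwards. Then $N=1$ forces $k=1$, the hypersurface is merely mean convex, and this is the worst case.
\end{itemize}

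The step can be closed as follows, treating $k\ge2$ and $k=1$ separately.

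For $k\ge2$ you have $\Lambda\in\tilde\Gamma_2$. Hence $\sum_I\Lambda_I^2=\sigma_1(\Lambda)^2-2\sigma_2(\Lambda)<\bigl(C_{n-1}^{\mathbf{p}-1}\sigma_1(\kappa)\bigr)^2\le C$, so every $|\Lambda_I|\le C$. Moreover $k\ge2$ forces $\mathbf{p}\le n-1$. So for $i\neq j$ you can pick $K$ with $|K|=\mathbf{p}-1$ and $K\cap\{i,j\}=\emptyset$, and then $\kappa_i-\kappa_j=\Lambda_{K+i}-\Lambda_{K+j}$ is bounded. Together with $0<\sigma_1(\kappa)\le C$, this bounds each $\kappa_i$.

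For $k=1$, no argument using only the cone condition and the bound on $\sigma_1$ can work. There the equation is the prescribed mean curvature equation. Under the uniform $C^0$ and $C^1$ bounds it is uniformly elliptic and quasilinear with bounded right-hand side. You should therefore get a uniform $C^{1,\alpha}$ bound from Ladyzhenskaya--Ural'tseva type gradient H\"older estimates, and then bound $\nabla^2\overline\rho$ by Schauder theory. This route also covers $k=q=1$, where the coefficient $k-q$ of the good term in the proof of Theorem~\ref{c22} vanishes and that theorem gives nothing.
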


\subsection{Existence and uniqueness }\

\begin{proof}[\textbf{Proof of Theorem~\ref{k-convex} (\romannumeral2)}] We divide the proof  into three steps.\\
\textbf{Step 1:  Existence:} For any small constant $\varepsilon > 0$, the equation~\eqref{eq:5.1} has a unique strictly convex  solution $\rho_\varepsilon$ by applying the method used in the proof of Theorem \ref{k-convex} {\em(\romannumeral1)}.  Denote $\bar{\rho}_\varepsilon = \dfrac{\rho_\varepsilon}{\min_{\mathbb{S}^n} \rho_\varepsilon}$, then $\bar{\rho}_\varepsilon$ satisfies
$$\frac{\sigma_k}{\sigma_l}(\Lambda(\bar{\rho_\varepsilon}))= f(\min_{\mathbb{S}^n}\rho_\varepsilon)^{-\varepsilon} ({\bar{\rho}_\varepsilon})^{b+2q-\varepsilon} (\bar{\rho}_\varepsilon^2 + |\nabla \bar{\rho}_\varepsilon|^2)^{-\frac{q}{2}}, \quad \text{on } \mathbb{S}^n.$$
Letting $\varepsilon \to 0^+$, we have $|\nabla (\bar{\rho}_\varepsilon)^\varepsilon| = \varepsilon (\bar{\rho}_\varepsilon)^{\varepsilon - 1} |\nabla \bar{\rho}_\varepsilon| \to 0$ by \eqref{eq:5.9c1} and \eqref{eq:5.10c1}. Then $(\min_{\mathbb{S}^n} {\rho}_\varepsilon)^{-\varepsilon}$ converges to a positive constant $\gamma$. Thus $\bar{\rho}_\varepsilon$ converges to a $(\mathbf{p}, k)$-convex solution of equation~\eqref{eq:1.1}. The Constant Rank Theorem (Theorem~\ref{crt})  can maintain the convexity  if  $q\geq 0$ and
$f^{\frac{1}{k-l}}(\frac{X}{|X|}) |X|^{\frac{b}{k-l}}$ is locally concave in $\mathbb{R}^{n+1} \backslash \{0\}$.\\
\textbf{Step 2: Uniqueness of the  solution:} Suppose there are two  strictly convex solutions $\rho_1$ and $\rho_2$ such that
$$\frac{\sigma_k}{\sigma_l}(W(\rho_i))=\gamma f \rho_i^{b+2q} (\rho_i^2 + |\nabla \rho_i|^2)^{-\frac{q}{2}},  \quad i=1,2,$$
where  $\{W_{IJ}\}$ are the entries of the matrix of the linear operator $\mathcal{D}_A$ generated by $A=\{a_{ij}(\rho)\}$  with
$$a_{ij}(\rho)=\frac{1}{ \sqrt{\rho^2+|\nabla \rho|^2}}\Big(\delta^{i}_{j}+\big[-\sigma^{ik}+\frac{\nabla^i\rho
\nabla^k\rho}{\rho^2v^2}\big]\nabla_j\nabla_k(\log \rho)\Big).$$

Denote $F(W(\rho))=\frac{\sigma_k}{\sigma_l}(W(\rho))$. Let $M(\rho) := \frac{F(W(\rho))}{\rho^{b+2q} (\rho^2 + |\nabla \rho|^2)^{-\frac{q}{2}}},$
then $M(\rho_1) - M(\rho_2) = \gamma f - \gamma f = 0$. Since $M$ is invariant under scaling, we may assume $\rho_1 \leq \rho_2$ and $\rho_1(x_0) = \rho_2(x_0)$ for some point $x_0 \in \mathbb{S}^n$. Denote $\rho_t = t\rho_1 + (1-t)\rho_2$ for $0 \leq t \leq 1$, then
\begin{equation*}
\begin{aligned}
0 = M(\rho_2) - M(\rho_1) &= \int_0^1 -\frac{d}{dt} M(\rho_t) dt \\
&= \sum_{i,j} \widetilde{a}_{ij}(x)(\rho_1-\rho_2)_{ij} + \sum_i b_i(x)(\rho_1-\rho_2)_i + c(x)(\rho_1-\rho_2),
\end{aligned}
\end{equation*}
where
\begin{equation*}
\begin{aligned}
\widetilde{a}_{ij} = \int_0^1 \rho_t^{-b-2q-1}(\rho_t^2 + |\nabla \rho_t|^2)^{\frac{q-1}{2}}\frac{\partial F}{\partial a_{si}}[\sigma^{sj}-\frac{D^s\rho_t
D^j\rho_t}{\rho_t^2v^2}] \, dt
 > 0,
\end{aligned}
\end{equation*}
\begin{equation*}
\begin{aligned}
b_i = \int_0^1 - \rho_t^{-b-2q}(\rho_t^2 + |\nabla \rho_t|^2)^{\frac{q}{2}}\left[\frac{\partial F}{\partial a_{sm}}\frac{\partial a_{sm}}{\partial(\rho_t)_i}+\frac{q(\rho_t)_iF}{(\rho_t^2 + |\nabla \rho_t|^2)}\right]
 \, dt,
\end{aligned}
\end{equation*}
\begin{equation*}
\begin{aligned}
c = \int_0^1  -\rho_t^{-b-2q}(\rho_t^2 + |\nabla \rho_t|^2)^{\frac{q}{2}}
\left[\sum_{s,m}\frac{\partial F}{\partial a_{sm}}\frac{\partial a_{sm}}{\partial\rho_t}
+\frac{q\rho_tF}{(\rho_t^2 + |\nabla \rho_t|^2)}
-\frac{(b+2q)F}{\rho_t}\right]
dt.
\end{aligned}
\end{equation*}
Therefore, by the maximum principle, we have $\rho_1-\rho_2 \equiv 0$ on $\mathbb{S}^n$.\\
\textbf{Step 3: Uniqueness of the constant $\gamma$:} Assume that there exists two positive constants $\gamma, \tilde{\gamma}$ and two strictly convex solutions $\rho, \tilde{\rho}$ such that
$$\frac{\sigma_k}{\sigma_l}(W(\rho))=\gamma f \rho^{b+2q} (\rho^2 + |\nabla \rho|^2)^{-\frac{q}{2}},\quad
\frac{\sigma_k}{\sigma_l}(W(\tilde{\rho}))=\tilde{\gamma} f \tilde{\rho}^{b+2q} (\tilde{\rho}^2 + |\nabla \tilde{\rho}|^2)^{-\frac{q}{2}}.$$
Suppose $G = \frac{\rho}{\tilde{\rho}}$ attains its maximum at $x_0 \in \mathbb{S}^n$. Then at $x_0$,
$$
 0= \nabla \log G = \frac{\nabla \rho}{\rho} - \frac{\nabla \tilde{\rho}}{\tilde{\rho}},
$$
and
\begin{eqnarray*}
\begin{aligned}
0\geq\nabla^2 \log G
= \frac{\nabla^2 \rho}{\rho} - \frac{\nabla^2 \tilde{\rho}}{\tilde{\rho}},
\end{aligned}
\end{eqnarray*}
which implies $\rho W(\rho) \geq \tilde{\rho}W(\tilde{\rho})$. Thus at $x_0$ we derive
\begin{eqnarray*}
\begin{aligned}
\frac{\gamma}{{\tilde{\gamma}}} = \frac{\gamma f(x_0)}{\tilde{\gamma} f(x_0)}
&= \frac{\rho^{-b-2q} (\rho^2 + |\nabla \rho|^2)^{\frac{q}{2}} \frac{\sigma_k}{\sigma_l}(W(\rho))}{\tilde{\rho}^{-b-2q} (\tilde{\rho}^2 + |\nabla \tilde{\rho}|^2)^{\frac{q}{2}} \frac{\sigma_k}{\sigma_l}(W(\tilde{\rho}))} \\
&= \frac{\frac{\sigma_k}{\sigma_l}(\rho W(\rho))}{\frac{\sigma_k}{\sigma_l}(\tilde{\rho}W(\tilde{\rho}))} \geq 1.
\end{aligned}
\end{eqnarray*}
Similarly, we can also get $\gamma \leq \tilde{{\gamma}}$ at the minimum point of $G$, hence $\gamma \equiv \tilde{{\gamma}}$.
\end{proof}

\begin{proof}[\textbf{Proof of Theorem~\ref{k-convexk1} (\romannumeral2)}]
The proof is analogous to that in  Theorem~\ref{k-convex} {\em(\romannumeral2)} and so is omitted. It is important to note that Theorem \ref{thm:5.3c21} is applied to establish the convergence of
$\bar{\rho}_\varepsilon$  to a $(\mathbf{p}, k)$-convex solution of equation~\eqref{eq:1.2}.
\end{proof}

\textbf{Conflict of interest statement:}
On behalf of all authors, the corresponding author states that there is no conflict of interest.

\textbf{Data availability statement:}
No datasets were generated or analysed during the current study.

\end{document}